\numberwithin{equation}{section}
\newtheorem{theorem}{Theorem}[section]
\newtheorem{corollary}[theorem]{Corollary}
\newtheorem{lemma}[theorem]{Lemma}
\newtheorem{proposition}[theorem]{Proposition}
\theoremstyle{definition}
\newtheorem{definition}[theorem]{Definition}
\theoremstyle{remark}
\newtheorem{remark}[theorem]{Remark}
\newdimen\AAdi%
\newbox\AAbo%
\def\AAk#1#2{\setbox\AAbo=\hbox{#2}\AAdi=\wd\AAbo\kern#1\AAdi{}}%
\def\eqlabel#1{\def\@currentlabel{#1}}
\def\formula#1{\def\@tempa{#1}\let\@tempb\theequation\def\theequation{%
\hbox{#1}}\def\@currentlabel{(\theequation)}$$}
\def\endformula{\leqno\hbox{(\@tempa)}$$\@ignoretrue\let\theequation\@tempb}
\def\given{\hskip5\p@\relax\vrule\@width.4\p@\hskip5\p@\relax}
\newcommand{\open}[1]{%
\par\normalfont\topsep6\p@\@plus6\p@\trivlist\item[\hskip\labelsep\itshape#1%
\@addpunct{.}]\ignorespaces}
\DeclareRobustCommand{\close}[1]{%
  \ifmmode 
  \else \leavevmode\unskip\penalty9999 \hbox{}\nobreak\hfill
  \fi
  \quad\hbox{$#1$}}
\newlength{\toskip}\settowidth{\toskip}{(\theequation)}
\def\<{\langle}
\def\>{\rangle}
\def \R {{\mathbb R}}
\def \L {{\mathbb L}}
\def \Var {\textrm{Var}}
\def \Ent {\textrm{Ent}}
\def \Osc {\textrm{Osc}}
\newcommand{\Hess}{\operatorname{Hess}}
\newcommand{\Ric}{\operatorname{Ric}}
\begin{document}
\date{\today}


\title[Functional inequalities with variable curvature]{Self-improvement of the Bakry-Emery criterion for Poincar\'e inequalities and Wasserstein contraction using variable curvature bounds}

 \author[P. Cattiaux]{\textbf{\quad {Patrick} Cattiaux $^{\spadesuit}$ \, \, }}
\address{{\bf {Patrick} CATTIAUX},\\ Institut de Math\'ematiques de Toulouse. CNRS UMR 5219. \\
Universit\'e Paul Sabatier,
\\ 118 route
de Narbonne, F-31062 Toulouse cedex 09.} \email{patrick.cattiaux@math.univ-toulouse.fr}

 \author[M. Fathi]{\textbf{\quad {Max} Fathi $^{\spadesuit \clubsuit}$ \, \, }}
\address{{\bf {Max} FATHI},\\ CNRS and Institut de Math\'ematiques de Toulouse. CNRS UMR 5219. \\
Universit\'e Paul Sabatier,
\\ 118 route
de Narbonne, F-31062 Toulouse cedex 09.} \email{max.fathi@math.univ-toulouse.fr}

\author[A. Guillin]{\textbf{\quad {Arnaud} Guillin $^{\diamondsuit}$}}
\address{{\bf {Arnaud} GUILLIN},\\ Laboratoire de Math\'ematiques, CNRS UMR 6620, Universit\'e Clermont-Auvergne,
avenue des Landais, F-63177 Aubi\`ere.} \email{arnaud.guillin@uca.fr}

\maketitle

 \begin{center}

 \textsc{$^{\spadesuit}$  Universit\'e de Toulouse}
\smallskip

 \textsc{$^{\clubsuit}$  CNRS}
\smallskip

\textsc{$^{\diamondsuit}$ Universit\'e Clermont-Auvergne}
\smallskip

\end{center}

\begin{abstract}
 We study Poincar\'e inequalities and long-time behavior for diffusion processes on $\R^n$ under a variable curvature lower bound, in the sense of Bakry-Emery. We derive various estimates on the rate of convergence to equilibrium in $L^1$ optimal transport distance, as well as bounds on the constant in the Poincar\'e inequality in several situations of interest, including some where curvature may be negative. In particular, we prove a self-improvement of the Bakry-Emery estimate for Poincar\'e inequalities when curvature is positive but not constant. 
\end{abstract}
\bigskip

\textit{ Key words :}  Bakry-Emery curvature condition, Poincar\'e inequality, Wasserstein contraction.
\bigskip

\textit{ MSC 2010 : } 26D10, 47D07, 60G10, 60J60.
\bigskip

\section{Introduction.}

Let $\mu(dx) = Z^{-1} \, e^{-V(x)} \, dx$ be a probability measure defined on $\mathbb R^n$. We assume that $V$ is a smooth enough function (of $C^\infty$ class in this introduction). The most classical consequence of the celebrated Bakry-Emery criterion $CD(\rho,\infty)$ (see \cite[Definition 1.16.1]{BaGLbook}) is that, as soon as the Hessian of $V$ is uniformly positive definite, i.e. for some $\rho>0$, all $u \in \mathbb R^n$, all $x \in \mathbb R^n$
\begin{equation}\label{eqBE}
\langle u \, , \, \Hess V(x) \, u\rangle \, \geq \, \rho \, |u|^2 \, ,
\end{equation}
then $\mu$ satisfies several functional inequalities, including the following:
\begin{definition}
A probability measure $\mu$ satisfies a Poincar\'e inequality if for all smooth $f$, (here and in the sequel we denote by $\mu(f)$ the integral of $f$ w.r.t. $\mu$),
\begin{equation}\label{eqpoinc}
\Var_\mu(f):= \mu(f^2) - \mu^2(f) \leq C_P(\mu) \, \mu(|\nabla f|^2) \, .
\end{equation}
It satisfies a logarithmic Sobolev inequality (LSI) if for all smooth $f$
\begin{equation}\label{eqLS}
H(f^2|\mu):= \mu(f^2 \, \ln (f^2)) - \mu(f^2) \, \ln(\mu(f^2)) \leq C_{LS}(\mu) \, \mu(|\nabla f|^2) \, .
\end{equation}
\end{definition}
%

In the sequel, $C_P$ and $C_{LS}$ should be understood as the best constants for which the previous inequalities hold, for a given probability measure. We refer to \cite{Ane,BaGLbook,Wbook} among many others, for a comprehensive introduction to some of the useful consequences of these inequalities, as well as their most important properties.
\medskip

When curvature is bounded from below by some constant $\rho > 0$, then the celebrated Bakry-Emery theorem states that 
\begin{equation} \label{be_bnds}
C_P \leq \rho^{-1}; \hspace{5mm} C_{LSI} \leq 2\rho^{-1}.
\end{equation}
An interesting remark is that in the Gaussian case, the Poincar\'e and logarithmic Sobolev constants obtained through this (seemingly) crude upper bound are in fact optimal. Under this curvature condition, the key element in the usual proofs of these functional inequalities is that the associated semi-group $P_t=e^{tL}$, where $L=\Delta -\nabla V.\nabla$, satisfies (provided it is well defined), as soon as \eqref{eqBE} holds, the pseudo commutation property
\begin{equation}\label{eqBEcommut}
|\nabla P_t f | \, \leq \, e^{-\rho t} \, P_t(|\nabla f|)
\end{equation}
for all smooth $f$. The constant $\rho$ can be seen as some kind of Ricci curvature lower bound for the semi-group. This terminology comes form the fact that, for a Brownian motion on a smooth manifold, this property is actually equivalent to having a lower bound of the form $\Ric \geq \rho g$, where $\Ric$ is the Ricci curvature tensor and $g$ the metric tensor. Actually \eqref{eqBEcommut} is true as soon as \eqref{eqBE} is satisfied even $\rho \in \mathbb R$ is non-positive, but in this case one cannot immediately deduce the inequalities we are interested in for $\mu$. The probabilistic interpretation is provided by the associated stochastic process $X_t^x$ solution of the stochastic differential (integral) equation
\begin{equation}\label{eqsde}
X_t^x \, = \, x + \sqrt 2 \, B_t \, - \, \int_0^t \, \nabla V(X_s^x) \, ds
\end{equation}
where $B_.$ is a standard Brownian motion, and with the semi group then satisfying the formula 
\begin{equation}\label{eqsemi}
P_t f(x) \, = \, \mathbb E(f(X_t^x)) \, := \, \mathbb E_x(f(X_t)) \, .
\end{equation}
The notation $\mathbb E_\nu$ will be used when looking at the stochastic process starting from some random initial data $X_0$ with distribution $\nu$. One can check that $\mu$ is a stationary (and even reversible) measure for this process.
\medskip

A natural question is to extend \eqref{eqBEcommut} to the more general situation of a non-constant Ricci curvature lower bound. More precisely, since $\Hess V(x)$ is a real symmetric matrix, all its eigenvalues $(\rho_i(x))_{i=1,...,n}$ are real and 
\begin{equation}\label{eqBEvar}
\langle u \, , \, Hess V(x) \, u\rangle \, \geq \, \rho(x) \, |u|^2 \, , \textrm{with $\rho(x)=\rho_m(x)=\min_{i=1,...,n} \, \rho_i(x)$.}
\end{equation}
Notice that $\rho_m(x)$ is optimal in the previous inequality simply looking at the eigenvectors of the Hessian matrix. 
\smallskip

\begin{definition}\label{defcurve}
We shall say that the curvature is bounded from below if \eqref{eqBEvar} is satisfied for some $\rho(x) \geq \rho_0 \in \mathbb R$ and any $x$. 
\end{definition}
Equivalently, for $\lambda> - \, \rho_0/2$, $\lambda |x|^2 + V(x)$ is uniformly convex. It follows that, defining $\psi(x)=|x|^2$, $L\psi (x) \leq K \psi(x) + a^2$ for some constant $a$. It is then classical that the process is conservative (non explosive) and ergodic with invariant (reversible) measure $\mu$. More precisely, thanks to reversibility (i.e. symmetry in $L^2(\mu)$), the spectral theorem allows us to show that for $f \in \mathbb L^2(\mu)$, $P_t f \to \mu(f)$ as $t \to +\infty$ in $\mathbb L^2(\mu)$ (a direct probabilistic proof is contained in \cite{CatPota}).
\medskip

One motivation for introducing a variable curvature bound, even in situations where curvature is actually bounded from below by some positive constant, is that the Bakry-Emery theorem is rigid \cite{CZ17,GKKO,OT19}: if equality holds in either bounds of \eqref{be_bnds}, then the measure \emph{must} split off a Gaussian factor, and therefore the optimal $\rho_m$ in \eqref{eqBEvar} is constantly equal to $\rho$. Therefore for any measure satisfying a Ricci curvature bound with a \emph{variable} optimal curvature lower bound must satisfy a Poincar\'e inequality and a logarithmic Sobolev inequality with strictly better bounds. An improvement using the harmonic average of the curvature bound was obtained in \cite{Vey11} for the Poincar\'e constant. See also \cite{DPF17, CF18} for some results in that direction, of a different flavor than those we shall obtain here. 

We shall first give a simple proof of the following generalization of \eqref{eqBEcommut} (see section \ref{seccouple} for precise statements)
\begin{proposition}\label{propBEvarcommut}
If \eqref{eqBEvar} is satisfied for some regular enough function $\rho$, it holds $$|\nabla P_t f|(x)  \, \leq \, \mathbb E_x \left(e^{-\int_0^t \, \rho(X_s) \, ds} \, |\nabla f|(X_t)\right) \, .$$
\end{proposition}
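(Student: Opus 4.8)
The plan is to use the standard Bakry--Émery semigroup interpolation argument, but keeping track of the variable curvature rather than bounding it by a constant. For a fixed smooth function $f$ (with enough integrability and decay, to be justified separately), fix $t>0$ and consider the interpolation functional
\begin{equation*}
\Lambda(s) \, = \, P_s\!\left(e^{-\int_0^{t-s} \rho(X_u)\,du}\,\big|\nabla P_{t-s} f\big|\right)(x),
\end{equation*}
or rather, to avoid differentiating the absolute value where $\nabla P_{t-s}f$ vanishes, I would first work with $|\nabla P_{t-s}f|_\varepsilon := \sqrt{|\nabla P_{t-s}f|^2 + \varepsilon}$ and let $\varepsilon\to 0$ at the end. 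The idea is that $\Lambda(0) = e^{-\int_0^t \rho(X_u)\,du}|\nabla P_t f|$ evaluated at $x$ — wait, more carefully: at $s=0$ the functional reads $|\nabla P_t f|(x)$ (up to the $\varepsilon$-regularization), while at $s=t$ it reads $P_t\big(|\nabla f|\big)(x)$ with the weight $e^{-\int_0^0}=1$ replaced by the accumulated exponential. The cleanest bookkeeping is to show $\Lambda$ is nondecreasing in $s$, so that $\Lambda(0)\le\Lambda(t)$ gives exactly the claimed inequality once one recognizes $\Lambda(t)$ as the probabilistic expectation via the Feynman--Kac / Dynkin formula \eqref{eqsemi}.

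The key computation is to differentiate $\Lambda$ in $s$. Writing $g_s = P_{t-s}f$, one has $\partial_s g_s = -L g_s$, and $\partial_s$ acting on $|\nabla g_s|$ produces the Bochner-type term. Concretely,
\begin{equation*}
\frac{d}{ds}\,P_s\big(w_s\,|\nabla g_s|\big) \, = \, P_s\!\Big(L(w_s |\nabla g_s|) + (\partial_s w_s)|\nabla g_s| + w_s\,\partial_s|\nabla g_s|\Big),
\end{equation*}
where $w_s(y)$ is the (path-dependent) exponential weight. The term $\partial_s|\nabla g_s| = -\langle \nabla g_s, \nabla L g_s\rangle/|\nabla g_s|$ combines with $L|\nabla g_s|$ through the Bochner formula
\begin{equation*}
L|\nabla g|^2 \, = \, 2\langle \nabla g, \nabla L g\rangle + 2\|\Hess g\|_{HS}^2 + 2\langle \nabla g, \Hess V \,\nabla g\rangle,
\end{equation*}
and using $L|\nabla g| = \tfrac{1}{2|\nabla g|}L|\nabla g|^2 - \tfrac{1}{4|\nabla g|^3}|\nabla|\nabla g|^2|^2$ together with the Cauchy--Schwarz (Kato-type) inequality $\|\Hess g\|_{HS}^2 \ge |\nabla|\nabla g||^2$, one finds that all the second-derivative terms have a favorable sign and the surviving term is $\langle \nabla g, \Hess V\,\nabla g\rangle / |\nabla g| \ge \rho(y)|\nabla g|$ by \eqref{eqBEvar}. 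This is precisely cancelled (made nonnegative) by choosing the weight's time-derivative so that $\partial_s w_s$ contributes $-\rho$; the cross terms from $L$ hitting $w_s$ are handled because $w_s$ is transported along the process and the mixed $\nabla w_s\cdot\nabla|\nabla g_s|$ and $|\nabla g_s| L w_s$ pieces assemble, after the change of variables bringing the deterministic weight into the probabilistic one, into the Feynman--Kac generator. Passing to the probabilistic picture via \eqref{eqsde}--\eqref{eqsemi} and Itô's formula is the most transparent route: one shows directly that $M_s := e^{-\int_0^s \rho(X_u)\,du}\,|\nabla P_{t-s}f|_\varepsilon(X_s)$ is a submartingale under $\mathbb E_x$, whence $\mathbb E_x[M_0]\le \mathbb E_x[M_t]$.

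The main obstacle, and the point requiring care, is the regularity and integrability needed to justify all these manipulations: one must know that $P_t f$ is smooth with controlled growth (so $\nabla P_t f$ and $\Hess P_t f$ exist and the exchanges of $L$, $\nabla$, and $\partial_s$ are licit), that the local martingale obtained from Itô's formula is a genuine submartingale (not merely a local one) — this is where the conservativeness and the Lyapunov bound $L\psi \le K\psi + a^2$ from Definition~\ref{defcurve} enter, controlling $|\nabla P_{t-s}f|(X_s)$ along paths — and that the $\varepsilon\to 0$ limit can be taken, which follows by monotone/dominated convergence once the bound is established for each $\varepsilon>0$. The Kato inequality step, which is only an inequality (not an identity) and degenerates where $\nabla g$ vanishes, is exactly why the $\varepsilon$-regularization is introduced; with it the argument is clean on the set $\{|\nabla g_s|>0\}$ and the regularized norm is smooth everywhere. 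I would organize section~\ref{seccouple} so that the analytic hypotheses on $f$ and $\rho$ are stated precisely there, reducing the proof here to the submartingale property plus these cited regularity facts.
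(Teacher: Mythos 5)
Your argument is correct in outline, but it is a genuinely different route from the one the paper takes. You run the intrinsic Bakry--\'Emery interpolation: Bochner's formula for $L=\Delta-\nabla V\cdot\nabla$, the Kato-type inequality $\|\Hess g\|_{HS}^2\geq |\nabla|\nabla g||^2$, the $\varepsilon$-regularisation of the norm, and the resulting submartingale property of $M_s=e^{-\int_0^s\rho(X_u)\,du}\,|\nabla P_{t-s}f|_\varepsilon(X_s)$ (equivalently, monotonicity of $s\mapsto Q_s(|\nabla P_{t-s}f|_\varepsilon)(x)$ for the Feynman--Kac semigroup $Q_s$ with potential $\rho$ --- note your first display is garbled, since the weight should accumulate over $[0,s]$ and is a path functional, not a function to which $P_s$ can be applied; your later formulation of $M_s$ is the correct one). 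This is essentially the proof of \cite{BHS} specialised to $\R^n$. The paper instead gives two pathwise proofs: differentiation of the stochastic flow (Theorem \ref{thmkun}, giving $\partial_i X_t^x=\exp(-\int_0^t\Hess V(X_s^x)\,ds)\,e_i$ and then a spectral bound on the matrix exponential), and synchronous coupling of two solutions driven by the same Brownian motion. Both of the paper's arguments avoid Bochner, Kato and the regularisation entirely, and reduce the technical burden to differentiating under the expectation or passing to the limit $y\to x$; your approach carries the heavier analytic overhead you correctly flag (regularity of $P_tf$, upgrading the local submartingale to a true one, the $O(\sqrt\varepsilon)$ error term $\rho\varepsilon/|\nabla g|_\varepsilon$ created by the regularisation), but it is the version that survives on manifolds and metric measure spaces where no flat stochastic flow is available, and it self-improves to $L^p$-gradient bounds. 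Either way the hypotheses needed are comparable to those of Theorems \ref{thmBEvarcommut} and \ref{thmBEvarcommut2} ($V$ of class $C^2$ or $C^3$, curvature bounded from below).
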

This result is not new. It is shown for instance in the recent work by Braun, Habermann and Sturm \cite{BHS} in the much more general framework of metric measure spaces. That work also contains numerous results on equivalences between several notions of ``variable Ricci curvature'', and was actually the starting point of the present work. In the diffusion case we are looking at, some of us heard about Proposition \ref{propBEvarcommut} from D. Bakry in informal discussions. A partial result in this direction is also contained in \cite[Section 7.4]{CGsemin}. Some related gradient bounds using local curvature bounds were investigated in \cite{ATW14}.

\noindent The natural question is then to understand what consequences we can get from this gradient bound. The goal of the present work is to derive some applications to the study of the long-time behavior of the underlying stochastic dynamic.
\medskip

We would like to mention another related approach, the so called ``intertwining'' method. When $\mu$ is gaussian with covariance matrix $\rho Id$ (i.e. $X_.$ is an Ornstein-Uhlenbeck process), the inequality \eqref{eqBEcommut} becomes an equality 
 $$\nabla P_t f(x) = e^{-\rho t} P_t(\nabla f)(x) \, .$$ An intertwining semi-group is a distortion of the gradient such that $$\nabla P_t f(x) = P_t^A(A \, \nabla f)(x) \, ,$$ for some perturbed semi-group $P_t^A$. In some situations, if the perturbation is nice enough one may use it to recover gradient bounds similar to \eqref{eqBEcommut}. The intertwining method has been well known for a long time in the context of stochastic processes.  Its application to functional inequalities (Poincar\'e or Brascamp-Lieb) is more recent. For background about implementation of the intertwining method for diffusion processes, we refer to \cite{ABJ,BJ,BJM}.
\medskip

Another approach that can sometimes be applied in situations where curvature may be negative is that of F-Y. Wang \cite{W97}, later extended by E. Milman \cite{emil2}, who showed that if curvature is bounded from below by some uniform, but possibly negative constant, a strong enough Gaussian concentration inequality allows to recover a logarithmic Sobolev inequality, and even an isoperimetric inequality. See also \cite{GRS} for an alternative proof, and \cite{barmil} for some applications in statistical physics. The results obtained via that method and those we shall present here do not seem to be directly comparable. 

In order to use Proposition \ref{propBEvarcommut}, consider some $1$-Lipschitz function $f$. If $X_0$ is distributed according to $\nu$, we denote by $P_t^*\nu$ the distribution of $X_t$. Since the semi-group is symmetric we should omit the $^*$, but it helps to understand the nature of the various objects. We thus have for probability measures $\nu$ and $\beta$, $$|P_t^*\nu(f) - P_t^*\beta(f)| = |\nu(P_t f)-\beta(P_t f)|$$ and $$\parallel |\nabla P_t f|\parallel_\infty \, \leq \, \sup_x \mathbb E_x\left[e^{-\int_0^t \, \rho(X_s) \, ds}\right] \, .$$ It follows, using its variational expression, that the $W_1$ Wasserstein distance satisfies
\begin{equation}\label{eqw1-1}
W_1(P_t^*\nu,P_t^*\beta) \, \leq \, \sup_x \mathbb E_x\left[e^{-\int_0^t \, \rho(X_s) \, ds}\right] \, W_1(\nu,\beta) \, .
\end{equation}
In order to get some decay to $0$ for $W_1(P_t^*\nu,\mu)$ , it thus remains to estimate the sup-norm of $\mathbb E_x \left(e^{-\int_0^t \, \rho(X_s) \, ds}\right)$. This connection between convergence in Wasserstein distance and gradient estimates is a particular instance of the Kuwada duality theorem \cite{Kuw10}.
\smallskip

Assuming that the process is ergodic (for instance when the curvature is bounded from below), we know that 
\begin{equation}\label{eqergod}
\frac 1t \, \int_0^t \, \rho(X_s)  \, ds \; \to \; \mu(\rho)
\end{equation}
as $t \to \infty$, $\mathbb P_x$ almost surely for all $x$. 
\smallskip

Hence we should expect that $e^{-\int_0^t \, \rho(X_s) \, ds}$ may behave like $e^{-\mu(\rho)t}$ for large $t$, so that one can expect that, replacing $\rho = \min_x \rho(x)$ in the Bakry-Emery criterion by $\mu(\rho)$, will allow us to derive interesting results as soon as $\mu(\rho)>0$. In particular, we can hope to handle some situations where $\rho$ is negative in some region of space, possibly even at infinity.  
\smallskip

\noindent It turns out that $\mu(\rho_m)$ is often positive. For instance if $n=1$, since $\rho_m=V''$, integrating by parts yields, 
$$\mu(\rho_m) = \int \, V'' \, e^{-V} dx = \int \, (V')^2 \, e^{-V} \, dx \, > \, 0 \, ,$$ provided the integrals exist and $V' e^{-V}$ goes to $0$ at infinity. 

\noindent For a general $n$, and any $u \in \mathbb R^n$, it holds $$\int \, \langle u \, , \, \Hess V \, u\rangle \, e^{-V} \, dx \, = \, \int \, \langle u \, , \, \nabla V\rangle^2 \, e^{-V} \, dx \, \geq 0 \, ,$$ provided everything makes sense and $\langle u \, , \, \nabla V\rangle \, e^{-V}$ goes to $0$ at infinity. But in order to estimate $\mu(\rho_m)$ we would have to take the infimum w.r.t. $u$ under the integral sign, so that (strict) positivity is unclear.

\noindent Nevertheless, the optimal lower bound on average curvature is often positive. This means that something else is needed in order to obtain interesting consequences, since the asymptotic behavior of the exponential term in \eqref{eqw1-1} cannot be that simple. 
\smallskip

Another reason is the following: if we have a uniform contractive estimate of the form 
$$\sup_x \mathbb E_x\left(e^{-\int_0^t \, \rho(X_s) \, ds}\right) \, \leq \, e^{-ct} \, ,$$ 
then the semi-group is a contraction in $W_1$ distance. This property is known to imply that the curvature is bounded from below by a \emph{positive} constant \cite{RS}.
\medskip

One weakness in this approach is that \eqref{eqw1-1} involves a supremum over $x$. Such a uniform control requires strong continuity assumptions for the semi-group (for instance ultra-boundedness). It is thus interesting to try to get a direct expression for $W_1(P^*_t \nu \, , \, P^*_t \beta)$ without using the variational expression of $W_1$. This is done in some specific situations in Section \ref{secWcouple}.
\medskip

Similarly, one can try to get $\mathbb L^2$ estimates. Recall that $$\Var_\mu(P_tf) \, = \, 2 \,  \int_t^{+\infty} \, \mu(|\nabla P_sf|^2) \, ds \, .$$ Hence if $f$ is $1$-Lipschitz we thus have 
\begin{equation}\label{eqvardecay}
\Var_\mu(P_tf) \, \leq \, 2 \, \int_t^{+\infty} \, \mathbb E_\mu\left(e^{-\int_0^s \, 2 \, \rho(X_u) \, du}\right) \, ds \, .
\end{equation}
 If we are able to show that $$\mathbb E_\mu\left(e^{-\int_0^s \, 2 \, \rho(X_u) \, du}\right) \, \leq \, C \, e^{-cs}$$ for some $c>0$ provided $\mu(\rho)>0$, one can expect some decay $$ \Var_\mu(P_tf) \, \leq \, \frac{2C}{c} \, e^{-ct}$$ for all $1$-Lipschitz function $f$. Recall the following result \cite[Lemma 2.12]{CGZ}: 
 \begin{lemma}\label{lemdecayf}
  Let $\mathcal{C}$ be a dense subset of $\mathbb L^2(\mu)$. Suppose that there exists $c>0$, and, for any $f \in \mathcal{C}$, a constant $c_f$ such that:
$$ \forall t, \quad  \Var_\mu(P_t f) \leq c_f \; e^{- \, c t} \, .$$ Then
$$\forall f \in \L^2(\mu), \forall t, \quad \Var_\mu (P_t f) \leq e^{- \, c t}\Var_\mu(f) \, .$$
\end{lemma}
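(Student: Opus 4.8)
The plan is to exploit reversibility (self-adjointness of the semigroup in $\L^2(\mu)$) together with the semigroup property to turn the hypothesis on a dense set into a statement for all of $\L^2(\mu)$, and then to use the variational characterization of the spectral gap that is hidden in the hypothesis. First I would observe that the assumed bound $\Var_\mu(P_t f)\leq c_f e^{-ct}$ for $f$ in the dense set $\CC$ already forces a Poincar\'e inequality: writing $t\mapsto \Var_\mu(P_t f)$, this function is smooth, non-increasing, and its derivative at $0$ equals $-2\,\mu(|\nabla f|^2)$ by the standard identity $\frac{d}{dt}\Var_\mu(P_t f)=-2\,\mu(|\nabla P_t f|^2)$ (used already in the excerpt in the form $\Var_\mu(P_t f)=2\int_t^\infty \mu(|\nabla P_s f|^2)\,ds$). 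The decay hypothesis says this function is bounded above by $c_f e^{-ct}$; I would like to deduce that it is in fact bounded by $e^{-ct}\Var_\mu(f)$, i.e.\ with the sharp prefactor, for every $f\in\L^2(\mu)$.

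The cleanest route is spectral. By the spectral theorem applied to the self-adjoint operator $-L$ on $\L^2_0(\mu)$ (the orthogonal complement of constants), write $\Var_\mu(P_t f)=\int_{0}^{\infty} e^{-2\lambda t}\,d\langle E_\lambda f,f\rangle$ where $E_\lambda$ is the spectral resolution of $-L$ and $f\in\L^2_0(\mu)$ without loss of generality (subtract $\mu(f)$). The claim $\Var_\mu(P_t f)\leq e^{-ct}\Var_\mu(f)$ for all $f$ and $t$ is then exactly equivalent to the spectral measure being supported in $[c/2,\infty)$, i.e.\ to a spectral gap $\lambda_1\geq c/2$, i.e.\ to the Poincar\'e inequality $\Var_\mu(f)\leq \frac{2}{c}\mu(|\nabla f|^2)$. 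So the real content is: the decay $\Var_\mu(P_t f)\leq c_f e^{-ct}$ on a dense set implies $\lambda_1\geq c/2$. For this, suppose for contradiction that there is spectral mass below $c/2$: pick $\lambda_0<c/2$ and a unit vector $g$ in the range of $E_{\lambda_0}$. Then $\Var_\mu(P_t g)\geq e^{-2\lambda_0 t}\|g\|_2^2\cdot(\text{mass of the bottom piece})$, which decays strictly slower than $e^{-ct}$. Approximating $g$ by some $f\in\CC$ in $\L^2$: since $P_t$ is a contraction on $\L^2_0(\mu)$, $\|P_t f - P_t g\|_2\leq \|f-g\|_2$, so $\Var_\mu(P_t f)^{1/2}\geq \Var_\mu(P_t g)^{1/2}-\|f-g\|_2 \geq \frac12 e^{-\lambda_0 t}\cdot(\text{const})$ for $\|f-g\|_2$ small enough and $t$ in a suitable range — but $t$ can be taken arbitrarily large only after we know the const doesn't depend on $t$, so I would instead argue: for fixed small $\epsilon$, choose $f\in\CC$ with $\|f-g\|_2<\epsilon$; then $\Var_\mu(P_t f)\geq (e^{-\lambda_0 t}\,a - \epsilon)^2$ where $a>0$ is the mass of the low-frequency part of $g$; comparing with the hypothesis $c_f e^{-ct}$ and letting $t\to\infty$ gives that eventually the left side exceeds the right side (since $e^{-2\lambda_0 t}$ beats $e^{-ct}$ when $2\lambda_0<c$), a contradiction, provided $\epsilon$ was chosen small enough relative to $a$ — and $a$ depends only on $g$, not on $f$ or $t$, so this works.

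Alternatively, and perhaps more in the spirit of the paper's self-contained style, one can avoid explicit spectral calculus and argue by monotonicity plus the semigroup property directly: for $f\in\CC$ and any $s,t\geq 0$, $\Var_\mu(P_{s+t}f)=\Var_\mu(P_s(P_t f))$, and $P_t f$ need not lie in $\CC$, which is exactly why the hypothesis is phrased with a generic constant $c_f$ and why density is invoked. One shows first that $t\mapsto e^{ct}\Var_\mu(P_t f)$ being bounded on $[0,\infty)$ for all $f$ in a dense set, combined with the contraction property $\Var_\mu(P_t h)\leq\Var_\mu(h)$ valid for all $h\in\L^2$, propagates to all $h\in\L^2$ by a density argument (the bound for generic $h$ inherits a constant depending on $h$). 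Then one uses that $g(t):=\Var_\mu(P_t h)$ satisfies $g(t+s)\leq$ something controlled, together with $-g'(t)=2\mu(|\nabla P_t h|^2)$ and the fact that a bound $\Var_\mu(P_t h)\leq C_h e^{-ct}$ for \emph{all} $h$ forces, by integrating, $\mu(|\nabla P_t h|^2)\geq \frac{c}{2}\Var_\mu(P_t h)$ pointwise in $t$ — this is a Poincar\'e-type inequality for each $P_t h$, and since $\{P_t h : h\in\L^2, t\geq0\}$ is dense enough (indeed $P_t h\to h$ as $t\to 0$), one recovers the Poincar\'e inequality for all $h$, hence the sharp exponential decay.

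The main obstacle I anticipate is the bookkeeping in the density/approximation step: the hypothesis supplies, for each $f\in\CC$, a constant $c_f$ with no uniformity, so one must be careful to transfer information to $\L^2$ without needing uniform control of $c_f$ — the key is that the \emph{rate} $c$ is uniform and that $P_t$ is an $\L^2(\mu)$-contraction on the orthocomplement of constants, which lets the approximation error $\|f-g\|_2$ (time-independent!) be played off against the genuinely slower-than-$e^{-ct}$ growth one would get from any spectral mass below $c/2$. Once that is handled the rest is soft: it is just the equivalence between a spectral gap $\geq c/2$, the Poincar\'e inequality with constant $2/c$, and the decay estimate $\Var_\mu(P_t h)\leq e^{-ct}\Var_\mu(h)$, all of which are standard consequences of the spectral theorem for the self-adjoint generator $L$.
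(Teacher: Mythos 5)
Your overall strategy---show that the hypothesis forces the spectral measure of $-L$ on $L^2_0(\mu)$ to be supported in $[c/2,\infty)$, then read off the sharp decay for all of $\mathbb L^2(\mu)$ from the spectral theorem---is sound, and the last step (spectral gap $\geq c/2$ $\Leftrightarrow$ $\Var_\mu(P_tf)\leq e^{-ct}\Var_\mu(f)$ for all $f$) is indeed standard. The paper itself does not prove the lemma but cites \cite[Lemma 2.12]{CGZ}, whose argument instead uses the log-convexity of $t\mapsto \Var_\mu(P_tf)$ (a Cauchy--Schwarz consequence of the spectral representation) to upgrade the decay with prefactor $c_f$ to decay with prefactor $\Var_\mu(f)$ for each $f\in\mathcal C$, and then passes to the limit by density.

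However, your execution of the contradiction step has a genuine quantifier gap. You fix $\lambda_0<c/2$, take a unit vector $g$ in the range of $E_{\lambda_0}$, approximate it by $f\in\mathcal C$ with $\|f-g\|_2<\epsilon$, and obtain $\Var_\mu(P_tf)^{1/2}\geq e^{-\lambda_0 t}a-\epsilon$. You then claim that ``letting $t\to\infty$'' the left side eventually exceeds $c_f^{1/2}e^{-ct/2}$ provided $\epsilon$ is small relative to $a$. This is not correct: for $t\gtrsim \lambda_0^{-1}\ln(a/\epsilon)$ the lower bound $e^{-\lambda_0 t}a-\epsilon$ becomes negative, hence vacuous, so there is no contradiction at large $t$. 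The contradiction must be located in the finite window where $e^{-\lambda_0 t}a\gg\epsilon$ \emph{and} $c_f^{1/2}e^{-ct/2}\ll e^{-\lambda_0 t}a$; the second condition requires $t$ larger than a threshold depending on $c_f$, and $c_f$ depends on the choice of $f$, which depends on $\epsilon$. Nothing in the hypothesis rules out that every admissible $f$ within distance $\epsilon$ of $g$ has $c_f\gtrsim \epsilon^{2-c/\lambda_0}$, in which case the two windows never overlap and no contradiction arises. So ``$\epsilon$ small relative to $a$'' does not suffice; you would need $\epsilon$ small relative to $c_f$, which is circular. The fix is easy and makes the argument cleaner: do not approximate $g$ at all. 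For each fixed $f\in\mathcal C$ set $a_f^2=\langle E_{\lambda_0}(f-\mu(f)),f-\mu(f)\rangle$; then $\Var_\mu(P_tf)\geq a_f^2e^{-2\lambda_0 t}$, and comparison with $c_fe^{-ct}$ as $t\to\infty$ (now for the \emph{same} $f$ on both sides) forces $a_f=0$. Hence $E_{\lambda_0}(f-\mu(f))=0$ for every $f\in\mathcal C$; since $\{f-\mu(f):f\in\mathcal C\}$ is dense in $L^2_0(\mu)$ and $E_{\lambda_0}$ is bounded, $E_{\lambda_0}=0$ on $L^2_0(\mu)$, which is the spectral gap you want. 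Your second, ``alternative'' paragraph is too vague to stand as a proof: the step asserting that $\Var_\mu(P_th)\leq C_he^{-ct}$ ``forces, by integrating, $\mu(|\nabla P_th|^2)\geq\frac c2\Var_\mu(P_th)$ pointwise in $t$'' is exactly the nontrivial point and is not justified without log-convexity or the spectral argument above.
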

Using homogeneity and since we are in the reversible situation, we will deduce that for all $f$, $\Var_\mu(P_tf) \leq e^{-ct} \, \Var_\mu(f)$, so that $\mu$ satisfies a Poincar\'e inequality with constant less than $2/c$.
\medskip

In order to develop this strategy, what is required is thus to control the rate of convergence in \eqref{eqergod}. Deviation bounds for additive functionals of ergodic diffusion processes have been studied in \cite{CGesaim,GLWY} based on a previous result by Wu (\cite{Wu0}). One can also mention \cite{Lez} where analytic tools are used. We shall mainly use \cite{GLWY} where a very detailed study is performed. It is also presumably possible to use direct controls as it is done in \cite{MT} for the one dimensional situations, though some points in \cite{MT} are not totally clear for us.
\medskip

To end this introduction let us give a flavor of the results we obtained in the case of Poincar\'e constant (see  Th.\ref{BEimpliPoinc} for a more complete result)
\begin{theorem}
Assume that $V$ is $C^2$ and that $\rho$ is bounded below by\ $\rho_0 >0$, then $$C_P(\mu) \leq \frac{1}{\rho_0 + \varepsilon'}$$ with 

$$\varepsilon' =  \left((\mu(\rho)-\rho_0) + \, \frac{1}{\rho_0} \, \Osc^2(\rho)\right) \left(1 \, - \, \sqrt{1 \, - \, \frac{(\mu(\rho)- \rho_0)^2}{((\mu(\rho)-\rho_0)+ \, \frac{1}{\rho_0}  \, \Osc^2(\rho))^2}}\right) \, .$$
\end{theorem}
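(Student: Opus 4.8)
The plan is to combine the gradient bound of Proposition~\ref{propBEvarcommut} with the variance-decay identity $\Var_\mu(P_t f) = 2\int_t^\infty \mu(|\nabla P_s f|^2)\,ds$ and Lemma~\ref{lemdecayf}, so that everything reduces to an exponential bound on the Feynman-Kac functional
$$
\Lambda(s) := \E_\mu\left(e^{-\int_0^s 2\rho(X_u)\,du}\right).
$$
First I would observe that it suffices to estimate $\Lambda(s)$ for $1$-Lipschitz test functions, since these are dense enough (in $\L^2$) to feed Lemma~\ref{lemdecayf} via \eqref{eqvardecay}. The main work is therefore a sharp bound on $\Lambda(s)$ under the hypothesis $\rho \geq \rho_0 > 0$ together with the \emph{extra} information that the average $\mu(\rho)$ may be strictly larger than $\rho_0$ and that the oscillation $\Osc(\rho) = \sup\rho - \inf\rho$ is finite. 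The natural tool here is a deviation/Poincaré-type bound for additive functionals of ergodic diffusions: writing $\rho = \mu(\rho) + (\rho - \mu(\rho))$, the centered part $g := \rho - \mu(\rho)$ can be controlled through its own variance $\Var_\mu(g)$, which is bounded by $\tfrac14\Osc^2(\rho)$, and through the underlying Poincaré constant — which at this stage we only know to be $\leq 1/\rho_0$ from the crude Bakry-Emery bound. This is exactly the bootstrap structure: an a priori Poincaré constant $\leq 1/\rho_0$ plus the curvature data yields an \emph{improved} Poincaré constant.

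Concretely, I would invoke the results of \cite{GLWY} (or the analogous estimate stated there for $\E_\mu e^{-\int_0^s h(X_u)du}$ in terms of $\mu(h)$, $\|h - \mu(h)\|_\infty$ or $\Var_\mu(h)$, and the spectral gap) to get
$$
\Lambda(s) \leq C\, e^{-c s}, \qquad c = \rho_0 + \varepsilon',
$$
where $\varepsilon'$ is the quantity appearing in the statement. The specific algebraic form of $\varepsilon'$ — namely $A\bigl(1 - \sqrt{1 - (\mu(\rho)-\rho_0)^2/A^2}\bigr)$ with $A = (\mu(\rho)-\rho_0) + \rho_0^{-1}\Osc^2(\rho)$ — strongly suggests that the bound on $\Lambda$ is obtained by optimizing a quadratic: one estimates $-\int_0^s 2\rho(X_u)du = -2\mu(\rho)s + 2\int_0^s (\mu(\rho) - \rho(X_u))du$, applies an exponential tilt by a parameter $\theta\in(0,2)$, and bounds $\E_\mu \exp\bigl(\theta\int_0^s(\mu(\rho)-\rho(X_u))du\bigr)$ by $\exp(cs)$ where $c$ is (to leading order) $\tfrac{\theta^2}{2}\cdot\frac{\Var_\mu(g)}{\text{gap}}$-type expression plus lower-order corrections. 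Then $2\mu(\rho) - c$ is the effective rate, and optimizing over $\theta$ — equivalently completing the square and taking the root of the resulting quadratic in $\theta$ — produces precisely the $\bigl(1 - \sqrt{1 - \cdots}\bigr)$ factor. I would carry out this optimization explicitly, substituting $\Var_\mu(g) \leq \tfrac14\Osc^2(\rho)$ and the a priori gap $\geq \rho_0$, and check that the constant $c$ one reads off equals $\rho_0 + \varepsilon'$.

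Finally, once $\Lambda(s) \leq C e^{-cs}$ is in hand, \eqref{eqvardecay} gives $\Var_\mu(P_t f) \leq (2C/c)\,e^{-ct}$ for all $1$-Lipschitz $f$; Lemma~\ref{lemdecayf} upgrades this to $\Var_\mu(P_t f) \leq e^{-ct}\Var_\mu(f)$ for all $f \in \L^2(\mu)$ (using reversibility and homogeneity as indicated in the text), and differentiating at $t=0$ yields $C_P(\mu) \leq 1/c = 1/(\rho_0 + \varepsilon')$. The main obstacle I anticipate is the deviation bound for $\Lambda(s)$: getting the \emph{sharp} constant (rather than just some exponential rate) requires care about which functional inequality is used to control the additive functional, whether one works with $\|\rho - \mu(\rho)\|_\infty$ or $\Var_\mu(\rho)$, and how the a priori spectral gap enters — this is where the factor $\rho_0^{-1}\Osc^2(\rho)$ versus a cruder $\Osc^2(\rho)$ gets decided, and where a naive application of \cite{GLWY} might lose the precise form of $\varepsilon'$. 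A secondary technical point is justifying the Feynman-Kac / gradient-bound manipulations rigorously under only $C^2$ regularity of $V$ and the stated curvature lower bound, which presumably requires the conservativeness and approximation arguments already sketched after Definition~\ref{defcurve}.
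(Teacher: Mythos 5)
Your overall architecture coincides with the paper's: gradient bound, then variance decay for Lipschitz $f$ via \eqref{eqvardecay}, then Lemma \ref{lemdecayf}, reducing everything to an exponential bound on $\Lambda(s)=\E_\mu\big(e^{-2\int_0^s\rho(X_u)du}\big)$; and you correctly identify the bootstrap, namely that the deviation estimate for the additive functional is fed by the a priori Bakry-Emery constant $C_P\le 1/\rho_0$ and by $\Osc(\rho)$, through the $W_1I$ inequality $\|\nu-\mu\|_{TV}^2\le 4C_P(\mu)I(\nu|\mu)$ of \cite{GLWY} with the Hamming cost, for which $\|\rho\|_c=\Osc(\rho)$.

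There is however a genuine gap in the key quantitative step. The mechanism you propose for bounding $\Lambda$ --- a single exponential tilt of the centered functional, a bound $\E_\mu\exp(\theta\int g)\le e^{cs}$ with $c$ quadratic in $\theta$, then completing the square in $\theta$ --- is the Legendre-transform ($\alpha^*$) route of Theorem \ref{thm1cgwy}(3); carried out, it yields $C_P(\mu)\le(\mu(\rho)-\tfrac{C}{2}\Osc^2(\rho))^{-1}$ as in \eqref{ratelip3}, which contains no square root, no additive $\rho_0$, and is vacuous whenever $\Osc(\rho)$ is large. The stated $\varepsilon'$ requires instead the two-regime argument of Corollary \ref{propcontroleexp}: split on the deviation event $A=\{-\int_0^t r(\rho-\mu(\rho))(X_s)ds\ge Rt\}$, use the pointwise bound $\rho\ge\rho_0$ on $A$ together with the $WI$ deviation probability $\P_\mu(A)\le e^{-t\,\alpha(R/(r\Osc(\rho)))}$, use the complementary bound on $A^c$, and combine with H\"older with an auxiliary exponent $r>2$ (so that $\E_\mu(e^{-2\int\rho})\le\E_\mu^{2/r}(e^{-r\int\rho})$). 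Equating the two resulting exponential rates fixes $r(\varepsilon)$ and then $\varepsilon$ through the quadratic \eqref{leqtosolve}, whose root is exactly \eqref{eqbestconst2}; without this splitting the $\rho_0$ inside $\varepsilon'$ never appears. A secondary slip: $\Var_\mu(P_tf)\le e^{-ct}\Var_\mu(f)$ gives $C_P(\mu)\le 2/c$, not $1/c$, so you need $\Lambda(s)\le Ce^{-2(\rho_0+\varepsilon')s}$ (which is what the H\"older step delivers), not decay at rate $\rho_0+\varepsilon'$ as written.
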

Remark that we may always choos $\rho$ to be bounded, eventually loosing on $\mu(\rho)$. We thus obtain a strict improvement of the Bakry-Emery estimation of the Poincar\'e constant in the variable curvature case.
\medskip

The plan of the paper is the following: in Section \ref{seccouple} we prove by differentiation of the flow or by coupling  Proposition \ref{propBEvarcommut}. Then we introduce the methodology for the Wasserstein contraction in Section 3 showing that the crucial estimates is the one of $E_\nu\left(e^{-\int_0^t \, \rho(X_s) \, ds}\right)$. By using transport-information inequalities we obtain controls for such Laplace transform in Section 4, where a particular attention is considered for the effect of the initial measure $\nu$, thus obtaining Wasserstein contraction estimates as well as general bounds on the Poincar\'e constant. Finally, Section 5 considers various examples and applications, e.g. the logconcave case or the uniformly convex case.

\section{Proof(s) of Proposition \ref{propBEvarcommut}.}\label{seccouple}

We shall propose two approaches for proving Proposition \ref{propBEvarcommut}: the first one is based on the theory of stochastic flows in the spirit of \cite{Kunita,IW}, the second one on coupling following the ideas in \cite{CGsemin, Vey11}. Since the framework is simpler than that of \cite{BHS}, the proofs are more direct. 
\smallskip

\subsection{Proof via stochastic flows.\\ \\}\label{secflow}
First recall the following result which is a consequence of \cite[Theorem 3.1 and Theorem 5.4]{Kunita}
\begin{theorem}\label{thmkun}
Let $$X^x_t \, = \, x - \int_0^t \, \nabla V(X_s^x) \, ds \, + \, \sqrt 2 \, B_t \, .$$ Assume that $V$ is $C^k$, and the process is conservative (i.e. for all $x$ the lifetime is almost surely infinite). Then for all $t$ the application $x \mapsto X_t^x$ is a.s. $C^{k-2}$, and its derivatives are obtained by formal differentiation. In particular $$\partial_i X_t^x:=\frac{\partial X_t^x}{\partial x_i}$$ satisfies $$\partial_i X_t^x \, = \, e_i \, - \, \int_0^t \, \Hess V(X_s^x) \, \partial_i X_s^x \, ds \, ,$$ where $e_i$ is the $i^{th}$ canonical unit vector, i.e. $$\partial_i X_t^x \, = \, \left(\exp \, \left(- \, \int_0^t \, \Hess V(X_s^x) \, ds\right)\right) \, e_i \, .$$
\end{theorem}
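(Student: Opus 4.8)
The plan is to prove Theorem \ref{thmkun} by invoking the general theory of stochastic flows for SDEs with smooth coefficients, as developed in Kunita's work, and then specializing to the gradient-drift case at hand. The statement has essentially two parts: (i) the flow $x \mapsto X_t^x$ is $C^{k-2}$ almost surely and the derivatives satisfy the formally differentiated equations, and (ii) in our specific situation the linearized equation for $\partial_i X_t^x$ is linear with matrix coefficient $-\Hess V(X_s^x)$, hence solved by the matrix exponential of $-\int_0^t \Hess V(X_s^x)\,ds$.

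For part (i), I would proceed as follows. The SDE $dX_t^x = -\nabla V(X_t^x)\,dt + \sqrt 2\, dB_t$ has diffusion coefficient equal to the (constant) identity, which is trivially smooth, and drift $b = -\nabla V$, which is $C^{k-1}$ since $V$ is $C^k$. Kunita's Theorem 3.1 gives, under local Lipschitz and linear growth type hypotheses (or more precisely under the stated smoothness together with non-explosion), that there is a modification of the solution which is a flow of $C^{j}$ diffeomorphisms for $j$ one less than the regularity of the coefficients — here one should be a little careful with the exact loss of derivatives in the reference, which is where the "$k-2$" comes from rather than "$k-1$", and combine Theorems 3.1 and 5.4 of \cite{Kunita} as indicated. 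Non-explosion is exactly the conservativeness hypothesis we have assumed (and which, as recalled in the introduction via the Lyapunov function $\psi(x)=|x|^2$, holds under the variable curvature lower bound). Once differentiability of the flow is known, the key point is that one may differentiate the integral equation under the stochastic and Lebesgue integrals: since the diffusion coefficient is constant, the martingale part contributes nothing to $\partial_i X_t^x$, and differentiating $\int_0^t \nabla V(X_s^x)\,ds$ in $x_i$ gives $\int_0^t \Hess V(X_s^x)\,\partial_i X_s^x\,ds$ by the chain rule. This justification of differentiation under the integral sign is the technical heart, and it is precisely what Kunita's theorems provide.

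For part (ii), observe that for each fixed $\omega$ the process $t \mapsto \partial_i X_t^x$ solves the (random, linear, non-autonomous) ODE
$$
\frac{d}{dt}\,\partial_i X_t^x \;=\; -\,\Hess V(X_t^x)\,\partial_i X_t^x, \qquad \partial_i X_0^x = e_i.
$$
Writing $M_t := \exp\!\left(-\int_0^t \Hess V(X_s^x)\,ds\right)$ (the matrix exponential), one checks directly that $t \mapsto M_t e_i$ satisfies this same linear ODE with the same initial condition — this is immediate because the matrices $-\int_0^t \Hess V(X_s^x)\,ds$ for different $t$ need not commute, but the exponential of an antiderivative still differentiates correctly \emph{only} when they do; so in fact one should be slightly careful and note that the statement as written is using this commuting structure, or alternatively present $\partial_i X_t^x$ as the solution of the linear ODE without claiming the clean exponential formula. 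In the generic symmetric-matrix case the correct object is the time-ordered exponential. I expect the exposition in the paper either restricts attention to where this subtlety is harmless or states the exponential formula as shorthand for the fundamental solution of the linear system. Uniqueness of solutions to the linear ODE (Gronwall) then identifies $\partial_i X_t^x$ with that fundamental solution applied to $e_i$.

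The main obstacle, as I see it, is not conceptual but bookkeeping: correctly citing and applying Kunita's regularity results with the right count of derivatives lost, verifying that the non-explosion hypothesis is met so that the flow is globally defined, and being honest about whether "$\exp(-\int_0^t \Hess V(X_s^x)\,ds)$" literally means the matrix exponential or the time-ordered (Dyson) exponential, since $\Hess V(X_s^x)$ at different times generally fails to commute. Everything else — differentiating under the integral, solving the linearized equation, Gronwall uniqueness — is routine once the flow differentiability is in hand.
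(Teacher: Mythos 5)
Your approach is the same as the paper's: Theorem \ref{thmkun} is stated there as a direct consequence of Kunita's Theorems 3.1 and 5.4, with no further proof given, so citing the flow-regularity theory and noting that conservativeness supplies the global-in-time flow is exactly what is done.

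Your caveat about the exponential is well founded and is the one substantive point worth recording. Since the matrices $\Hess V(X_s^x)$ at different times $s$ need not commute, $\exp\bigl(-\int_0^t \Hess V(X_s^x)\,ds\bigr)$ does not in general solve the linearized ODE; the correct object is the fundamental solution (time-ordered exponential) $J_t$ of $\dot v = -\Hess V(X_t^x)\,v$, and the displayed formula must be read as shorthand for it. This matters downstream: the paper's derivation of \eqref{eqhessalmost} diagonalizes the symmetric matrix $\int_0^t \Hess V(X_s^x)\,ds$, which is only legitimate under the literal exponential formula. The conclusion nevertheless survives, by the route you hint at with Gronwall: for any unit vector $u$, $\frac{d}{dt}|J_t u|^2 = -2\langle J_t u , \Hess V(X_t^x) J_t u\rangle \le -2\rho_m(X_t^x)\,|J_t u|^2$, whence $|J_t u| \le e^{-\int_0^t \rho_m(X_s^x)\,ds}$, which is all that is needed for Theorem \ref{thmBEvarcommut}. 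So your reading is the right one: state $\partial_i X_t^x = J_t e_i$ with $J_t$ the fundamental solution, and replace the eigenvalue computation by the differential inequality above.
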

If $f$ is a smooth function, we thus have $$\frac{\partial}{\partial x_i} \,  (f(X_t^x)) = \left\langle (\nabla f)(X_t^x) \, , \, \left(\exp \, \left(- \, \int_0^t \, \Hess V(X_s^x) \, ds\right)\right) \, e_i\right\rangle$$ so that $$|\nabla (f(X_t^x))|^2 = \left\langle (\nabla f)(X_t^x) \, , \, \left(\exp \, \left(- \, 2 \, \int_0^t \, \Hess V(X_s^x) \, ds\right)\right) (\nabla f)(X_t^x)\right\rangle \, .$$ Since $P_tf(x)=\mathbb E(f(X_t^x))$ we may calculate $\nabla P_tf(x)$ by differentiating under the expectation, if such a differentiation is allowed. But from what precedes
\begin{eqnarray}\label{eqhessalmost}
|\nabla (f(X_t^x))| &\leq& |(\nabla f)(X_t^x)| \; \sup_{u \in \mathbb S^{n-1}} \, \left\langle u \; , \; \left(\exp \, \left(- \, 2 \, \int_0^t \, \Hess V(X_s^x) \, ds\right)\right) \, u\right\rangle^{\frac 12} \\ &\leq& e^{- \, \left(\int_0^t \, \rho_m(X_s^x) \, ds\right)} \, |(\nabla f)(X_t^x)| \, , \nonumber
\end{eqnarray}
where $\rho_m$ is defined in \eqref{eqBEvar}. 

Indeed, if $\lambda(t,x)$ denotes the smallest eigenvalue of the real and symmetric matrix $\int_0^t \, Hess V(X_s^x) \, ds$ the supremum in \eqref{eqhessalmost} is attained for a corresponding normalized eigenvector $v(t,x)$ and is equal to $e^{- \, \lambda(t,x)}$. But
\begin{eqnarray*}
\lambda(t,x) &=& \left\langle v(t,x) \; , \; \left(\int_0^t \, \Hess V(X_s^x) \, ds\right) \, v(t,x)\right\rangle \\ &=& \int_0^t \, \langle v(t,x) \; , \;  \Hess V(X_s^x)  \, v(t,x)\rangle  \, ds \\ &\geq&  \left(\int_0^t \, \rho_m(X_s^x) \, ds\right) \, |v(t,x)|^2 \, = \, \int_0^t \, \rho_m(X_s^x) \, ds \, .
\end{eqnarray*}

Hence using Lebesgue's theorem for differentiating under the expectation we have obtained the following precise version of Proposition \ref{propBEvarcommut}
\begin{theorem}\label{thmBEvarcommut}
Assume that $V$ is $C^3$, that the process is conservative and that $x \mapsto \mathbb E\left(e^{-\int_0^t \, \rho(X_s^x) \, ds}\right)$ is locally bounded, $\rho$ being defined in \eqref{eqBEvar}. This is the case for instance when the curvature is bounded from below. Then for all smooth $f$, $$|\nabla P_t f|(x)  \, \leq \, \mathbb E_x \left(e^{-\int_0^t \, \rho(X_s) \, ds} \, |\nabla f|(X_t)\right) \, .$$
\end{theorem}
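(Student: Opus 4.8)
The plan is to differentiate the representation $P_t f(x)=\mathbb{E}(f(X_t^x))$ under the expectation sign and to control the resulting Jacobian term by a pathwise exponential weight coming from the curvature. By Theorem~\ref{thmkun} applied with $k=3$, the flow $x\mapsto X_t^x$ is a.s.\ $C^1$ with $\partial_i X_t^x=\bigl(\exp(-\int_0^t\Hess V(X_s^x)\,ds)\bigr)e_i$; writing $A(t,x):=\int_0^t\Hess V(X_s^x)\,ds$, which is symmetric, the chain rule gives the pathwise identity $\nabla_x\bigl(f(X_t^x)\bigr)=\exp(-A(t,x))\,(\nabla f)(X_t^x)$, hence $\bigl|\nabla_x(f(X_t^x))\bigr|\le e^{-\lambda(t,x)}\bigl|(\nabla f)(X_t^x)\bigr|$, where $\lambda(t,x)$ is the smallest eigenvalue of $A(t,x)$. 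As in the computation preceding the statement, for the associated unit eigenvector $v$ one has $\lambda(t,x)=\int_0^t\langle v,\Hess V(X_s^x)v\rangle\,ds\ge\int_0^t\rho(X_s^x)\,ds$ by \eqref{eqBEvar}. Once the interchange of $\nabla_x$ and $\mathbb{E}(\cdot)$ is justified, the announced inequality follows at once from $|\nabla P_t f(x)|\le\mathbb{E}(|\nabla_x(f(X_t^x))|)$.

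So the real content is the differentiation under the expectation. I would invoke the classical dominated-differentiation criterion: it suffices that for $\mathbb{P}$-a.e.\ $\omega$ the map $x\mapsto f(X_t^x(\omega))$ be differentiable (already granted) and that, on a fixed ball $B$ around the point where we differentiate, the difference quotients be dominated by a single $\mathbb{P}$-integrable random variable. By the pathwise bound above together with a mean-value argument, such a majorant is controlled by $\|\nabla f\|_\infty\,\sup_{y\in B}e^{-\int_0^t\rho(X_s^y)\,ds}$ when $\nabla f$ is bounded. If the curvature is bounded from below by $\rho_0$, this is even the \emph{deterministic} constant $\|\nabla f\|_\infty\,e^{|\rho_0|\,t}$, so the interchange is immediate; in the general case where only $y\mapsto\mathbb{E}(e^{-\int_0^t\rho(X_s^y)\,ds})$ is assumed locally bounded, I would combine that hypothesis with the a.s.\ continuity of $y\mapsto X_\cdot^y$ furnished by Theorem~\ref{thmkun}, shrinking $B$ and using a covering/monotone-convergence argument to manufacture the required $L^1$ majorant, and then conclude by Lebesgue's theorem.

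The main obstacle is precisely this last step: upgrading a pointwise bound on $\mathbb{E}(e^{-\int_0^t\rho(X_s^y)\,ds})$ to an $L^1$ bound on its supremum over a small ball, which is where continuity of the stochastic flow in the initial datum and the hypothesis of the theorem genuinely enter (and why conservativeness, needed to apply Theorem~\ref{thmkun}, cannot be dispensed with). A secondary and more cosmetic point is that for a general smooth $f$ with unbounded gradient the right-hand side may be infinite and $P_t f$ need not be smooth; this is handled by first proving the inequality for $f$ with bounded gradient (in particular for $1$-Lipschitz $f$, the case used in the applications) and then truncating a general $f$ by smooth cutoffs $\chi_N f$, passing to the limit with Fatou on the right-hand side and the locally uniform convergence of the gradients on the left.
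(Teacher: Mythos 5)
Your proposal is correct and follows essentially the same route as the paper: differentiation of the stochastic flow via Theorem~\ref{thmkun}, the pathwise bound $\bigl|\nabla_x(f(X_t^x))\bigr|\le e^{-\lambda(t,x)}|(\nabla f)(X_t^x)|$ with $\lambda(t,x)\ge\int_0^t\rho(X_s^x)\,ds$ obtained by testing the smallest eigenvalue's eigenvector against \eqref{eqBEvar}, and then Lebesgue's theorem to interchange $\nabla_x$ and $\mathbb{E}$. You are in fact more explicit than the paper about the one delicate point, namely how the local boundedness of $y\mapsto\mathbb{E}(e^{-\int_0^t\rho(X_s^y)\,ds})$ is upgraded to an integrable majorant of the difference quotients; the paper simply invokes Lebesgue's theorem at this step.
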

\smallskip
Note that if $\rho$ is bounded from below by some constant (which may be negative) and $V$ is smooth enough, this theorem does apply. 

\subsection{Proof via coupling.\\ \\}\label{seccouple1}

We shall now follow the method used by two of us in \cite{CGsemin}, namely synchronous coupling. 
The argument below is very close to the one of \cite[Theorem 6]{Vey11}, that derived a strongly related Wasserstein distance estimate for diffusions on manifolds.

Since \eqref{eqsde} admits a strong solution, one can build with the \emph{same} Brownian motion a pair of solutions starting from $(x,y)$, denoted again by $(X_.^x,X_.^y)$. 

First recall that 
\begin{eqnarray*}
\langle \nabla V(z) -\nabla V(z') \, , \, z-z'\rangle &=& \int_0^1 \, \langle z-z' \, , \, \Hess V(\lambda z +(1-\lambda)z') \, (z-z')\rangle \, d\lambda \\ &\geq& \, \int_0^1  \, \rho(\lambda z +(1-\lambda)z') \, |z-z'|^2 \, d\lambda \, .
\end{eqnarray*}
Assuming again that the curvature is bounded from below, we thus have
\begin{equation}\label{eqcoup1}
e^{\int_0^t \, 2 \int_0^1  \, \rho(\lambda X_s^x +(1-\lambda)X_s^y) \, d\lambda \, ds} \, |X_t^x-X_t^y|^2 = |x-y|^2 \, - \, 2 \, A(t)
\end{equation}
with
\begin{eqnarray*}
A(t) &=& \int_0^t  \left(\langle \nabla V(X_s^x)-\nabla V(X_s^y) , X_s^x-X_s^y \rangle - \left(\int_0^1  \rho(\lambda X_s^x +(1-\lambda)X_s^y) \, d\lambda\right)|X_s^x-X_s^y|^2\right) \\ &  &  \, \quad \times \quad e^{\int_0^s \, 2 \int_0^1  \, \rho(\lambda X_u^x +(1-\lambda)X_u^y) \, d\lambda \, du} \, ds \, \\ & & \, \leq \, 0.
\end{eqnarray*}
Hence 
\begin{equation}\label{eqw1temp}
|X_t^x-X_t^y| \, \leq \, |x-y| \; \, e^{- \, \int_0^t \, \int_0^1  \, \rho(\lambda X_s^x +(1-\lambda)X_s^y) \, d\lambda \, ds} \, .
\end{equation}
Following \cite{CGsemin} p.5 we thus have, using the mean value theorem $$|P_tf(x)-P_tf(y)| \leq \mathbb E(|f(X_t^x)-f(X_t^y)|) \leq |x-y| \, \mathbb E\left(|\nabla f(z_t)| \,  e^{- \, \int_0^t \, \int_0^1  \, \rho(\lambda X_s^x +(1-\lambda)X_s^y) \, d\lambda \, ds}\right)$$ for some $z_t$ sandwiched by $X_t^x$ and $X_t^y$. It remains to divide by $|x-y|$, use that $X_s^y$ goes to $X_t^x$ almost surely as $y \to x$, that the curvature is bounded from below and Lebesgue's theorem to get a slightly different version of Theorem \ref{thmBEvarcommut}
\begin{theorem}\label{thmBEvarcommut2}
Assume that $V$ is $C^2$, that the curvature is bounded from below and that $\rho$ is continuous, $\rho$ being defined in \eqref{eqBEvar}. Then for all smooth $f$, $$|\nabla P_t f|(x)  \, \leq \, \mathbb E_x \left(e^{-\int_0^t \, \rho(X_s) \, ds} \, |\nabla f|(X_t)\right) \, .$$
\end{theorem}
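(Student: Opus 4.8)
The plan is to couple synchronously. Since \eqref{eqsde} admits a strong solution, for $x,y\in\R^n$ I would run $X^x_\cdot$ and $X^y_\cdot$ driven by the \emph{same} Brownian motion; then the noise cancels in the difference, so $D_t:=X^x_t-X^y_t$ is of finite variation and solves the pathwise ODE $\dot D_t=-\big(\nabla V(X^x_t)-\nabla V(X^y_t)\big)$. Writing $\langle\nabla V(X^x_t)-\nabla V(X^y_t),D_t\rangle=\int_0^1\langle D_t,\Hess V(\lambda X^x_t+(1-\lambda)X^y_t)\,D_t\rangle\,d\lambda\geq\Gamma_t\,|D_t|^2$ with $\Gamma_t:=\int_0^1\rho(\lambda X^x_t+(1-\lambda)X^y_t)\,d\lambda$ (using \eqref{eqBEvar}), I would introduce the integrating factor $e^{2\int_0^t\Gamma_s\,ds}$ and check that $t\mapsto e^{2\int_0^t\Gamma_s\,ds}|D_t|^2$ is non-increasing, its derivative being $2e^{2\int_0^t\Gamma_s\,ds}\big(\Gamma_t|D_t|^2-\langle\nabla V(X^x_t)-\nabla V(X^y_t),D_t\rangle\big)\leq0$. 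Evaluating at $t=0$ gives the pathwise contraction
$$|X^x_t-X^y_t|\;\leq\;|x-y|\;e^{-\int_0^t\Gamma_s\,ds}\,.$$

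Next I would transfer this to the semigroup. For smooth $f$, the mean value theorem along the segment joining $X^y_t$ to $X^x_t$ gives $|f(X^x_t)-f(X^y_t)|\leq|\nabla f(z_t)|\,|X^x_t-X^y_t|$ for some random $z_t$ on that segment, whence
$$\frac{|P_tf(x)-P_tf(y)|}{|x-y|}\;\leq\;\mathbb E\!\left[|\nabla f(z_t)|\,e^{-\int_0^t\Gamma_s\,ds}\right]\,,$$
the expectation being taken for the coupled pair started at $(x,y)$. Dividing by $|x-y|$ and letting $y\to x$ bounds the local Lipschitz constant of $P_tf$ at $x$, hence $|\nabla P_tf|(x)$, by the limiting value of the right-hand side; and as $y\to x$ one has $X^y_s\to X^x_s$ (so $z_t\to X^x_t$) while, $\rho$ being continuous, $\Gamma_s\to\rho(X^x_s)$, so that formally the bound becomes $\mathbb E_x\big[e^{-\int_0^t\rho(X_s)\,ds}\,|\nabla f|(X_t)\big]$.

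The one delicate point — and the step I expect to be the main obstacle — is the rigorous passage to the limit $y\to x$ under the expectation. Pathwise continuity of $y\mapsto X^y_\cdot$, uniform on $[0,t]$, follows from standard SDE stability estimates, which here are available because the curvature lower bound gives $L\psi\leq K\psi+a^2$ with $\psi=|\cdot|^2$: this prevents explosion and yields the moment bounds $\mathbb E\big[\sup_{s\leq t}|X^x_s|^p\big]<\infty$ needed to control the flow. From these, $z_t\to X^x_t$ almost surely and, by dominated convergence in the $ds$-integral, $\int_0^t\Gamma_s\,ds\to\int_0^t\rho(X^x_s)\,ds$ almost surely. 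The dominated convergence in $\omega$ is precisely where the hypothesis ``curvature bounded from below'' enters: $\rho\geq\rho_0$ gives the uniform bound $e^{-\int_0^t\Gamma_s\,ds}\leq e^{-\rho_0 t}$, and for $y$ in a fixed bounded neighborhood of $x$ the points $z_t$ stay in an integrable random compact set, so $|\nabla f(z_t)|$ is dominated by an integrable random variable (polynomial growth of $\nabla f$ being harmless thanks to the moment bounds). The integrating-factor identity is elementary; it is only this interchange of limit and expectation that needs this care. Applying Lebesgue's theorem then yields $|\nabla P_tf|(x)\leq\mathbb E_x\big(e^{-\int_0^t\rho(X_s)\,ds}\,|\nabla f|(X_t)\big)$, which is the assertion.
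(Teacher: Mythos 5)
Your proof is correct and follows essentially the same route as the paper's coupling proof of this theorem: synchronous coupling, the integrating factor $e^{2\int_0^t\Gamma_s\,ds}$ applied to $|X_t^x-X_t^y|^2$ (which is exactly the paper's identity \eqref{eqcoup1} and the resulting bound \eqref{eqw1temp}), the mean value theorem with an intermediate point $z_t$, and Lebesgue's theorem as $y\to x$ using the uniform bound $e^{-\int_0^t\Gamma_s\,ds}\le e^{-\rho_0 t}$ coming from the curvature lower bound. Your discussion of the domination needed for the limit is if anything slightly more explicit than the paper's.
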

\medskip

\section{A direct control of the $W_1$ distance.}\label{secWcouple}

Unlike what is done in \cite{CGsemin}, studying the evolution of the Wasserstein distance starting from \eqref{eqw1temp} is not immediate. The difference is that in the constant curvature case there is no need to interpolate between $X_t^x$ and $X_t^y$. 

\subsection{Reinforcing the Ricci bound. \\ \\ }\label{subsecreinf}
To bypass this issue, we shall reinforce inequality \eqref{eqBEvar} by assuming that there exists a function $\kappa : \R^n \longrightarrow \R$ such that for all $(x,y) \in \mathbb R^{2n}$
\begin{equation}\label{eqBEreinf}
\langle \nabla V(x)-\nabla V(y) \, , \, x-y\rangle \, \geq \, (\kappa(x)+\kappa(y)) \, |x-y|^2 \, .
\end{equation}
If this holds for some $\kappa$, taking limits $y \to x$ it is easily seen that $2\kappa(x) \leq \rho_m(x)$. We can then follow the previous proof and get
\begin{equation}\label{eqw1temp1}
|X_t^x-X_t^y| \, \leq \, |x-y| \; \, e^{- \, \int_0^t \, (\kappa(X_s^x) +\kappa(X_s^y) ) \, ds} \, .
\end{equation}
As a consequence,
\begin{equation}\label{eqw1xy}
W_1(P_t^*\delta_x \, , \, P_t^*\delta_y) \, \leq \, \mathbb E_x^{\frac 12}\left(e^{- \, \int_0^t \, 2 \kappa(X_s)  \, ds}\right) \, \mathbb E_y^{\frac 12}\left(e^{- \, \int_0^t \, 2 \kappa(X_s)  \, ds}\right) \, |x-y| \, .
\end{equation}
The advantage of this bound is that we do not need a sup-norm control anymore. Another approach, using a  related coarse, non-local version of curvature bounds was used in \cite{Vey11}. Of course, if we want to replace the Dirac masses by general measures, the situation is not as simple. Nevertheless we have for any coupling $\pi$ of $\nu$ and $\mu$,
\begin{eqnarray}\label{eqW1xmu}
W_1(P_t^*\nu\, , \, \mu) &\leq& \mathbb E\left(|X^\mu_0-X_0^\nu| \, e^{- \, \int_0^t \, \kappa(X^\mu_s)  \, ds} \, e^{- \, \int_0^t \, \kappa(X^\nu_s)  \, ds}\right) \nonumber \\ &\leq& \mathbb E_\pi^{\frac 1p} \left(|y-x|^p\right) \, \mathbb E_\mu^{\frac 1r}\left(e^{- \, \int_0^t \, r \, \kappa(X_s)  \, ds}\right) \, \mathbb E_\nu^{\frac 1q}\left(e^{- \, \int_0^t \, q \, \kappa(X_s)  \, ds}\right)
\end{eqnarray}
provided $\frac 1p + \frac 1q + \frac 1r = 1$. By choosing an appropriate coupling $\pi$, we then obtain the following: 
\begin{proposition}\label{propBEreinf}
Assume that $V$ is $C^2$ and that \eqref{eqBEreinf} is satisfied for some $\kappa$ which is bounded from below by $\kappa_0 \in \mathbb R$. Then for all $(p,q,r)$ with $\frac 1p + \frac 1q + \frac 1r = 1$, $$W_1(P_t^*\nu\, , \, \mu) \leq \, W_p(\nu,\mu)  \, \mathbb E_\mu^{\frac 1r}\left(e^{- \, \int_0^t \, r \, \kappa(X_s)  \, ds}\right) \, \mathbb E_\nu^{\frac 1q}\left(e^{- \, \int_0^t \, q \, \kappa(X_s)  \, ds}\right) \, .$$
\end{proposition}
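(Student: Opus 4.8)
The plan is to turn the pathwise synchronous‑coupling bound \eqref{eqw1temp1} and the resulting estimate \eqref{eqW1xmu} into the stated inequality by choosing the coupling of the initial data optimally; no idea beyond what precedes the statement is needed.

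First I would check that the hypothesis places us in the setting of Section~\ref{seccouple}. As noted right after \eqref{eqBEreinf}, letting $y\to x$ gives $2\kappa(x)\leq\rho_m(x)$, so $\langle u,\Hess V(x)u\rangle\geq 2\kappa_0|u|^2$ and the curvature is bounded from below in the sense of Definition~\ref{defcurve}; hence \eqref{eqsde} has a unique strong solution, the process is conservative and ergodic, and $\mu$ is its stationary (reversible) measure. The same lower bound disposes of every integrability question: since $\kappa\geq\kappa_0$ one has $e^{-\int_0^t\kappa(X_s)\,ds}\leq e^{|\kappa_0|t}$ pathwise, so each Laplace functional $\mathbb{E}_\bullet\big(e^{-c\int_0^t\kappa(X_s)\,ds}\big)$ is finite, bounded by $e^{c|\kappa_0|t}$.

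Next I would reprove \eqref{eqw1temp1}: run two copies $X^x_\cdot,X^y_\cdot$ of \eqref{eqsde} with the \emph{same} Brownian motion, so that the difference has no martingale part and $t\mapsto|X^x_t-X^y_t|^2$ is $C^1$ with
$$\frac{d}{dt}\,|X^x_t-X^y_t|^2 \;=\; -2\,\langle \nabla V(X^x_t)-\nabla V(X^y_t)\,,\,X^x_t-X^y_t\rangle \;\leq\; -2\,(\kappa(X^x_t)+\kappa(X^y_t))\,|X^x_t-X^y_t|^2$$
by \eqref{eqBEreinf}; Gr\"onwall's lemma then yields \eqref{eqw1temp1}. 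Now fix $(p,q,r)$ with $\frac1p+\frac1q+\frac1r=1$. If $W_p(\nu,\mu)=\infty$ there is nothing to prove, so assume it is finite and let $\pi$ be an optimal coupling of $\nu$ and $\mu$ for $W_p$. Draw $(X^\nu_0,X^\mu_0)\sim\pi$ and run the two diffusions from these points with one common Brownian motion, independent of $(X^\nu_0,X^\mu_0)$. Then $X^\mu_t$ has law $P_t^*\mu=\mu$ and $X^\nu_t$ has law $P_t^*\nu$, so $(X^\nu_t,X^\mu_t)$ is a coupling of $P_t^*\nu$ and $\mu$, and \eqref{eqw1temp1} gives
$$W_1(P_t^*\nu,\mu)\;\leq\;\mathbb{E}\big(|X^\nu_t-X^\mu_t|\big)\;\leq\;\mathbb{E}\Big(|X^\nu_0-X^\mu_0|\;e^{-\int_0^t\kappa(X^\nu_s)\,ds}\;e^{-\int_0^t\kappa(X^\mu_s)\,ds}\Big).$$
Applying H\"older's inequality with exponents $(p,q,r)$ to the three nonnegative factors, and using that the path law of $X^\nu_\cdot$ (resp.\ $X^\mu_\cdot$) is that of the diffusion started from $\nu$ (resp.\ $\mu$), unaffected by the coupling $\pi$ and the shared noise, the first factor gives $\mathbb{E}_\pi(|x-y|^p)^{1/p}=W_p(\nu,\mu)$, the second $\mathbb{E}_\nu^{1/q}\big(e^{-q\int_0^t\kappa(X_s)\,ds}\big)$, and the third $\mathbb{E}_\mu^{1/r}\big(e^{-r\int_0^t\kappa(X_s)\,ds}\big)$, which is exactly the asserted bound.

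I do not expect a genuine obstacle here: the argument is only synchronous coupling, Gr\"onwall and H\"older. The two points needing care are (i) justifying that the synchronous coupling and the differentiation of $|X^x_t-X^y_t|^2$ are legitimate --- this is precisely where the $C^2$ regularity of $V$ together with the lower curvature bound (hence strong existence and non‑explosion) are used --- and (ii) the bookkeeping in the H\"older step, making sure that the Laplace functionals appearing there are genuinely those of the process started from $\nu$ and from $\mu$, rather than of the coupled pair.
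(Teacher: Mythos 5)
Your argument is correct and is essentially the paper's own proof: synchronous coupling yields \eqref{eqw1temp1} (the paper writes it as the exact identity \eqref{eqcoup1} with a nonpositive remainder rather than via Gr\"onwall, but this is the same computation), and the triple H\"older inequality applied to an optimal $W_p$-coupling of the initial data gives exactly \eqref{eqW1xmu} and hence the proposition. Nothing is missing.
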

Notice that since $2 \kappa \leq \rho_m$, the assumptions in the previous proposition imply that the curvature is bounded from below, hence that the process is conservative.

\begin{remark}
Since this proof is based on a coupling argument, we can also extend it to control $W_p$ distances with $p > 1$. 
\end{remark}

\begin{remark}\label{remBEreinf}
The set of $V$ satisfying \eqref{eqBEreinf} contains of course all uniformly convex functions (for which $\kappa$ is a positive constant), but also functions like $V(x)=g(|x|^2)$ for some convex function $g:\mathbb R^+ \to \mathbb R$ with $\kappa(x)=g'(|x|^2)$. Indeed for such functions 
\begin{eqnarray*}
\langle \nabla V(x) -\nabla V(y) \, , \, x-y\rangle &=& 2\left(g'(|x|^2)|x|^2 +g'(|y|^2)|y|^2 \, - \, (g'(|x|^2)+g'(|y|^2)) \, \langle x,y\rangle \right) \\ &\geq&  (g'(|x|^2)|x|^2 +g'(|y|^2)|y|^2) \, - \, 2(g'(|x|^2)+g'(|y|^2)) \, \langle x,y\rangle \\ &=& (g'(|x|^2)+g'(|y|^2))|x-y|^2
\end{eqnarray*}
since the difference equal to $$(g'(|x|^2)-g'(|y|^2))(|x|^2-|y|^2) \geq 0$$ thanks to the convexity of $g$.
\hfill $\diamondsuit$
\end{remark}
\begin{remark}\label{remmirrorcoup}
Actually, as explained in \cite[Remark 6]{CGsemin}, using \cite[Theorem 4.1 and below]{BGLharn}, one can deduce the gradient commutation property from the decay of the $W_1$ distance. This is done in \cite[Theorem 14]{CGsemin} to derive such a commutation under the assumption $$\langle \nabla V(x)-\nabla V(y) \, , \, x-y\rangle \, \geq \, \kappa(|x-y|) \, |x-y|^2 \, ,$$ replacing the synchronous coupling by the mirror coupling and assuming that $\kappa(u)$ goes to $\kappa_\infty >0$ as $u$ goes to $+ \infty$ ($\kappa(0)$ is not defined). This assumption is satisfied when $V$ is uniformly convex at infinity.
\hfill $\diamondsuit$
\end{remark}

\subsection{Some special cases. \\ \\}\label{subsec1d}

Assumption \eqref{eqBEreinf} is nevertheless much stronger than \eqref{eqBEvar}. It is thus natural to ask whether on can obtain similar results starting from \eqref{eqw1temp} $$|X_t^x-X_t^y| \, \leq \, |x-y| \; \, e^{- \, \int_0^t \, \int_0^1  \, \rho(\lambda X_s^x +(1-\lambda)X_s^y) \, d\lambda \, ds} \, .$$ The difficulty is to control $\rho(\lambda x +(1-\lambda)y)$ in terms of $\rho(x)$ and $\rho(y)$. In dimension one however we may use the following well known monotonicity property of the coupling we used: 
$$\textrm{if } \; x\leq y \; , \textrm{ then for all $s\in \mathbb R$, } \; X_s^x \leq X_s^y \; .$$ 
We may then state a first result: 
\begin{proposition}\label{1dlogconc}
Assume that $n=1$, $V$ is $C^2$ and that \eqref{eqBEvar} is satisfied for some $\rho$ which is bounded from below by $\rho_0 \geq 0$ (log-concave case). Assume in addition that $\rho$ is non-increasing on $(-\infty,a)$ and non-decreasing on $(a,+\infty)$ for some $a \in \mathbb R$. 

Then for all $(p,q,r)$ with $\frac 1p + \frac 1q + \frac 1r = 1$, $$W_1(P_t^*\nu \, , \, \mu) \leq \, W_p(\nu,\mu)  \, A_t$$ where 
\begin{eqnarray*}
A_t&=& \mathbb E_\mu^{\frac 1r}\left(e^{- \, \int_0^t \, r \, \mathbf 1_{X_s>a} \, \rho(X_s)  \, ds}\right) \, \mathbb E_\nu^{\frac 1q}\left(e^{- \, \int_0^t \, q \, \mathbf 1_{X_s<a} \, \rho(X_s)  \, ds}\right) + \\ &+& \, \mathbb E_\mu^{\frac 1r}\left(e^{- \, \int_0^t \, r \, \mathbf 1_{X_s<a} \, \rho(X_s)  \, ds}\right) \, \mathbb E_\nu^{\frac 1q}\left(e^{- \, \int_0^t \, q \, \mathbf 1_{X_s>a} \, \rho(X_s)  \, ds}\right).
\end{eqnarray*}
\end{proposition}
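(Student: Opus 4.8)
The plan is to upgrade the one-dimensional synchronous-coupling estimate \eqref{eqw1temp} by using the unimodal (``valley'') shape of $\rho$ to replace the average of $\rho$ along the segment joining $X_t^x$ and $X_t^y$ by a sum of two one-sided terms, and then to decouple the $\nu$-trajectory from the $\mu$-trajectory by H\"older's inequality, after optimizing over the coupling of $\nu$ and $\mu$.

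The one-variable fact at the heart of the argument is: if $z_1 < z_2$, then, because $\rho$ is non-increasing on $(-\infty,a)$, non-decreasing on $(a,+\infty)$ and $\rho \geq \rho_0 \geq 0$,
\[ \int_0^1 \rho\big(\lambda z_1 + (1-\lambda)z_2\big)\,d\lambda = \frac{1}{z_2-z_1}\int_{z_1}^{z_2}\rho(w)\,dw \;\geq\; \rho(z_2)\,\mathbf 1_{z_2 \leq a} + \rho(z_1)\,\mathbf 1_{z_1 \geq a}. \]
Indeed, if $z_2 \leq a$ the integrand is $\geq \rho(z_2)$ on $[z_1,z_2]$; if $z_1 \geq a$ it is $\geq \rho(z_1)$ there; and if $z_1 < a < z_2$ the average is $\geq \rho(a) \geq 0$ while both indicators vanish --- this last case is the only place where the log-concavity hypothesis enters, and the two indicators are mutually exclusive since $z_1 < z_2$. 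Combined with the comparison property of the synchronous coupling in dimension one ($x \leq y \Rightarrow X_s^x \leq X_s^y$ for all $s \geq 0$, so that $\lambda X_s^x + (1-\lambda)X_s^y$ runs over $[X_s^x, X_s^y]$), inequality \eqref{eqw1temp} yields, for $x < y$,
\[ |X_t^x - X_t^y| \;\leq\; |x-y|\,\exp\!\Big(-\!\int_0^t \rho(X_s^y)\,\mathbf 1_{X_s^y \leq a}\,ds\Big)\,\exp\!\Big(-\!\int_0^t \rho(X_s^x)\,\mathbf 1_{X_s^x \geq a}\,ds\Big); \]
when the coupled solutions have already coalesced by time $t$ the asserted inequality is trivial, and otherwise (by pathwise uniqueness) $X_s^x < X_s^y$ for all $s \leq t$, so the degenerate case $z_1 = z_2$ above never arises.

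Now fix an arbitrary coupling $\pi$ of $\nu$ and $\mu$, run \eqref{eqsde} synchronously from $(X_0^\nu, X_0^\mu) \sim \pi$, and use that $\mu$ is invariant, so that $(X_t^\nu, X_t^\mu)$ is a coupling of $P_t^*\nu$ and $\mu$ and hence $W_1(P_t^*\nu,\mu) \leq \mathbb E\,|X_t^\nu - X_t^\mu|$. Split this expectation over $\{X_0^\nu \leq X_0^\mu\}$ and $\{X_0^\nu > X_0^\mu\}$. On the first event, the previous display with $x = X_0^\nu$ and $y = X_0^\mu$ bounds $|X_t^\nu - X_t^\mu|$ by $|X_0^\nu - X_0^\mu|$ times one exponential factor depending only on the $\mu$-trajectory and one depending only on the $\nu$-trajectory; H\"older's inequality with exponents $(p,q,r)$ (with the obvious conventions at the endpoints), together with $X_0^\mu \sim \mu$, $X_0^\nu \sim \nu$, the fact that $\int_0^t \mathbf 1_{X_s = a}\,ds = 0$ almost surely, and $\mathbb E_\pi[|x-y|^p \mathbf 1_{x \leq y}] \leq \mathbb E_\pi[|x-y|^p]$, then yields
\[ \mathbb E\big[|X_t^\nu - X_t^\mu|\,\mathbf 1_{X_0^\nu \leq X_0^\mu}\big] \;\leq\; \mathbb E_\pi^{1/p}\big(|x-y|^p\big)\;\mathbb E_\mu^{1/r}\big(e^{-r\int_0^t \mathbf 1_{X_s < a}\rho(X_s)\,ds}\big)\;\mathbb E_\nu^{1/q}\big(e^{-q\int_0^t \mathbf 1_{X_s > a}\rho(X_s)\,ds}\big), \]
and $\{X_0^\nu > X_0^\mu\}$ is handled identically with $X^\nu$ and $X^\mu$ exchanged, producing the companion product with the two indicators swapped. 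Adding the two contributions and taking the infimum over $\pi$ (so that $\mathbb E_\pi^{1/p}(|x-y|^p)$ becomes $W_p(\nu,\mu)$) gives $W_1(P_t^*\nu,\mu) \leq W_p(\nu,\mu)\,A_t$.

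The genuinely delicate point is the one-variable estimate: recognizing that the unimodality of $\rho$, with minimum at $a$, is precisely what splits the joint quantity $\int_0^1 \rho(\lambda X_s^x + (1-\lambda)X_s^y)\,d\lambda$ into a ``$\mu$-side'' and a ``$\nu$-side'' contribution once the initial coupling is decomposed according to the sign of $X_0^\nu - X_0^\mu$. Everything after that is the same H\"older-and-invariance computation already used for Proposition~\ref{propBEreinf}, with only minor care needed for the degenerate configuration in the pathwise bound (handled by the coalescence remark) and for the endpoint exponents in H\"older's inequality.
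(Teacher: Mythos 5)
Your proof is correct and follows essentially the same route as the paper's: the same pointwise bound $\rho(\lambda z_1+(1-\lambda)z_2)\geq \rho(z_1)\mathbf 1_{z_1>a}+\rho(z_2)\mathbf 1_{z_2<a}$ for $z_1\le z_2$ coming from the unimodality of $\rho$ and $\rho\ge 0$, combined with the order-preservation of the one-dimensional synchronous coupling, the split of the initial coupling according to the sign of $X_0^\nu-X_0^\mu$, and the three-exponent H\"older inequality followed by optimization over $\pi$. Your extra care about coalescence and the boundary set $\{X_s=a\}$ is harmless (the paper's pointwise formulation for each fixed $\lambda$ sidesteps both issues), and the argument is complete.
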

\begin{proof}
If $x \leq y$, and since $\rho(z) \geq 0$, it holds $$\rho(\lambda X_s^x +(1-\lambda)X_s^y) \, \geq \, \rho(X_s^x) \, \mathbf 1_{X_s^x>a} + \rho(X_s^y) \, \mathbf 1_{X_s^y<a} \, .$$ It follows 
\begin{eqnarray*}
W_1(P_t^*\nu\, , \, \mu) &\leq& \mathbb E\left(|X^\mu_0-X_0^\nu| \, \mathbf 1_{X_0^\mu \leq X_0^\nu} \; e^{- \, \int_0^t \, (\rho(X^\mu_s) \, \mathbf 1_{X_s^\mu>a} + \,\rho(X^\nu_s) \, \mathbf 1_{X_s^\nu<a}) \,  ds}\right) \, + \\ &+&  \mathbb E\left(|X^\mu_0-X_0^\nu| \, \mathbf 1_{X_0^\mu \geq X_0^\nu} \; e^{- \, \int_0^t \, (\rho(X^\mu_s) \, \mathbf 1_{X_s^\mu<a} + \,\rho(X^\nu_s) \, \mathbf 1_{X_s^\nu>a}) \,  ds}\right) 
\end{eqnarray*}
yielding the result.
\end{proof}
The assumptions on $\rho$ is of course satisfied in some cases, but if $a$ is not infinite it often implies that $V$ is uniformly convex at infinity. In order to extend the previous idea to the multi-dimensional setting we will introduce two different notions of lower bounds on the curvature, to control its behavior at infinity:
\begin{definition}\label{defradial}
Let $\rho$ satisfying \eqref{eqBEvar}. For $x \in \mathbb R^n$ we define $$\rho_{inf}(|x|) = \inf_{|y|\leq |x|} \, \rho(y) \quad \textrm{ and } \quad \rho_{sup}(|x|) = \inf_{|y|\geq |x|} \, \rho(y) \, .$$
\end{definition}
We have
\begin{proposition}\label{proplcamel}
Assume that $V$ is $C^2$ and that \eqref{eqBEvar} is satisfied for some $\rho$ which is bounded from below by $\rho_0 \geq 0$ (log-concave case). Then if $\rho_{inf}$ (resp. $\rho_{sup}$) is non-increasing (resp. non-decreasing) on $(a,+\infty)$ for some $a\geq 0$, for all $x$ and $y$ in $\mathbb R^n$, $$W_1(P_t^*\delta_x \, , \, P_t^*\delta_y) \, \leq \, |x-y| \; \mathbb E_x \left(e^{- \, \int_0^t \, \mathbf 1_{|X_s|>a+|x-y|} \, \rho_{inf}(|X_s|+|x-y|) \, ds}\right)$$ respectively $$W_1(P_t^*\delta_x \, , \, P_t^*\delta_y) \, \leq \, |x-y| \; \mathbb E_x \left(e^{- \, \int_0^t \, \mathbf 1_{|X_s|>a + |x-y|} \, \rho_{sup}(|X_s|-|x-y|) \, ds}\right) \, .$$ 
\end{proposition}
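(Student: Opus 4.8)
The plan is to run the synchronous coupling of Subsection~\ref{seccouple1} and then massage the pathwise bound \eqref{eqw1temp} so that, in the log-concave case, its right-hand side depends only on the trajectory of $X^x$. Couple $X_\cdot^x$ and $X_\cdot^y$ through the same Brownian motion. Since $\rho\geq\rho_0\geq 0$, the exponential in \eqref{eqw1temp} is $\leq 1$, so that $|X_s^x-X_s^y|\leq |x-y|$ for every $s\geq 0$; this uniform control of the gap along the whole path is the one extra ingredient that the log-concavity buys us.

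With $z_\lambda^s:=\lambda X_s^x+(1-\lambda)X_s^y$ for $\lambda\in[0,1]$, the triangle inequality gives $|z_\lambda^s-X_s^x|=(1-\lambda)|X_s^x-X_s^y|\leq |x-y|$, hence
$$|X_s^x|-|x-y|\ \leq\ |z_\lambda^s|\ \leq\ |X_s^x|+|x-y|.$$
For the first inequality of the Proposition I would use the upper bound on $|z_\lambda^s|$ together with the fact that $\rho_{inf}$ is non-increasing to write $\rho(z_\lambda^s)\geq \rho_{inf}(|z_\lambda^s|)\geq\rho_{inf}(|X_s^x|+|x-y|)$; for the second, the lower bound on $|z_\lambda^s|$ and the definition of $\rho_{sup}$ give $\rho(z_\lambda^s)\geq\rho_{sup}(|X_s^x|-|x-y|)$ whenever that argument is non-negative. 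In both cases, because $\rho\geq 0$, one may insert the indicator $\mathbf 1_{|X_s^x|>a+|x-y|}$ at no cost (on its complement the resulting lower bound is $0\leq\rho(z_\lambda^s)$); the requirement $a\geq 0$ moreover keeps the radius $|X_s^x|-|x-y|$ non-negative on the event where it is evaluated, and isolates the region $(a,+\infty)$ on which $\rho_{inf}$, resp.\ $\rho_{sup}$, actually carries information.

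Integrating these lower bounds in $\lambda$ (the integrand no longer depends on $\lambda$) and then in $s$, and substituting into \eqref{eqw1temp}, yields a pathwise estimate of the form
$$|X_t^x-X_t^y|\ \leq\ |x-y|\,\exp\Big(-\int_0^t \mathbf 1_{|X_s^x|>a+|x-y|}\,\rho_{inf}(|X_s^x|+|x-y|)\,ds\Big)$$
and the analogous one with $\rho_{sup}(|X_s^x|-|x-y|)$, whose right-hand sides are $|x-y|$ times a functional of the single trajectory $X^x$. Since $(X_t^x,X_t^y)$ is a coupling of $P_t^*\delta_x$ and $P_t^*\delta_y$, taking expectations under $\mathbb E_x$ and using $W_1(P_t^*\delta_x,P_t^*\delta_y)\leq\mathbb E(|X_t^x-X_t^y|)$ gives both announced inequalities; alternatively one may push the pathwise bound through the mean value theorem exactly as in Subsection~\ref{seccouple1}. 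No integrability issue arises: under $\rho\geq\rho_0\geq 0$ the exponent is bounded and the process is conservative, so the expectations involved are finite. The argument is essentially a bookkeeping refinement of Subsection~\ref{seccouple1}; the only delicate point is getting the sign of the $\pm|x-y|$ shift of the radial argument right in each of the two cases, and noticing that one must first pass to the bound $|X_s^x-X_s^y|\leq|x-y|$ that is \emph{uniform in $s$} before estimating norms along the path.
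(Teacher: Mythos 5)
Your argument is correct and is essentially the paper's proof, just written out in full: the paper's one-line justification is exactly the observation that $\rho(\lambda z+(1-\lambda)z')\geq\rho_{inf}(|z|+|z-z'|)$ (resp.\ $\geq\rho_{sup}(|z|-|z-z'|)$) once the interpolated point lies outside the ball of radius $a$, combined with $\rho\geq 0$ elsewhere and the uniform-in-$s$ bound $|X_s^x-X_s^y|\leq|x-y|$ coming from \eqref{eqw1temp} in the log-concave case. Your handling of the indicator and of the $\pm|x-y|$ shifts matches the intended argument, so nothing is missing.
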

\begin{proof}
It is enough to remark that $$\rho(\lambda z +(1-\lambda)z') \geq \rho_{inf}(|z|+|z-z'|)$$ as soon as $|\lambda z +(1-\lambda)z'|\geq a$ and similarly for $\rho_{sup}$ and that $\rho \geq 0$ otherwise.
\end{proof}
\medskip

\section{Using WI deviation inequalities.}\label{secadditive}

\subsection{Transport-information inequalities. \\ \\}

As said in the introduction we shall extensively use the results of \cite{GLWY} on functional inequalities and concentration estimates for additive functionals. To this end we introduce some notations.
\smallskip

Let $c$ be a \emph{cost function} defined on $\mathbb R^n\otimes \mathbb R^n$, i.e $c$ is lower semicontinuous, non negative and satisfies $c(x,x)=0$ for all $x$. We shall also only consider cases where $c(x,y)=c(y,x)$. We may consider the \emph{transportation cost} $T_c$ defined by
\begin{equation}\label{eqtrans}
T_c(\nu,\mu) = \inf_{\{\pi, \pi\circ x^{-1}=\nu \, , \, \pi\circ y^{-1}=\mu\}} \, \int \, c(x,y) \, \pi(dx,dy) \, .
\end{equation}
When $c (x,y) = |x-y|^p$, we recover the usual $W_p$ distance, but other costs may also be of interest for our purpose here. The Kantorovich duality theorem states that 
\begin{equation}\label{eqtrans1}
T_c(\nu,\mu) = \sup_{(u,v)\in \Phi_c} \, \nu(u) \, - \, \mu(v) \, ,
\end{equation}
where $$\Phi_c := \{(u,v) \textrm{ Borel and bounded s.t. } u(x)-v(y)\leq c(x,y) \, , \forall (x,y)\} \, .$$ Also recall the definition of the Fischer information: for $\nu=f \mu$ with $f>0$ and locally Lipschitz
\begin{equation}\label{eqfisch}
I(\nu|\mu) := \frac 14 \; \mu\left(\frac{|\nabla f|^2}{f}\right) \, = \, \mu\left(|\nabla (\sqrt f)|^2\right) \, .
\end{equation}
If $\nu$ does not satisfy the previous assumptions then, $I(\nu|\mu)=+\infty$.

The following is a consequence of Theorem 2.2 in \cite{GLWY}, connecting functional inequalities and deviation estimates of additive functionals along drift-diffusion S.D.E.

\begin{theorem}\label{thm1cgwy}
Let $\alpha:[0,+\infty) \to [0,+\infty)$ be a continuous increasing function with $\alpha(0)=0$. Then the following statements are equivalent.
\begin{enumerate}
\item[(1)] \quad The following transport-information $WI(\alpha,c)$ inequality is satisfied $$\alpha(T_c(\nu,\mu)) \, \leq \, I(\nu|\mu) \; , \quad \forall \nu \, .$$
\item[(2)] \quad For any initial measure $\beta \ll \mu$ such that $d\beta/d\mu \in \mathbb L^2(\mu)$, all $u,v$ such that $(u,v) \in \Phi_c$, all $R,t>0$, $$\mathbb P_\beta \left(\frac 1t \, \int_0^t u(X_s) \, ds \, \geq \, \mu(v) \, + \, R \, \right) \; \leq \; ||d\beta/d\mu||_{\mathbb L^2(\mu)} \; e^{ \, - \, t \, \alpha(R)} \, .$$
\item[(3)] \quad Defining $$P_t^{\lambda u}f(x) \, = \, \mathbb E_x\left[f(X_t) \, e^{\int_0^t \, \lambda \, u(X_s) \, ds}\right] \, , $$ then for all $u,v$ such that $(u,v) \in \Phi_c$ and all $t>0$ it holds $$\parallel P_t^{\lambda u}f\parallel_{\mathbb L^2(\mu)} \, \leq \, e^{t \, (\lambda \, \mu(v) \, + \, \alpha^*(\lambda))} \; \parallel f \parallel_{\mathbb L^2(\mu)} \, ,$$ where $\alpha^*$ is the Legendre transform of $\alpha \mathbf 1_{[0, \infty)}$, restricted to $[0, \infty)$. 
\smallskip

\noindent In particular, for any initial measure $\beta \ll \mu$ such that $d\beta/d\mu \in \mathbb L^2(\mu)$,
$$\mathbb{E}_{\beta}\left[\exp\left(\int_0^t{ \lambda \; u(X_s)ds}\right)\right] \leq ||d\beta/d\mu||_{\mathbb L^2(\mu)} \, \exp[t \, (\lambda \mu(v) + \alpha^*(\lambda))] \, .$$
\end{enumerate}
\end{theorem}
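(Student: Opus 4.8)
The plan is to reduce the statement to Theorem 2.2 in \cite{GLWY}, the only extra ingredients being Kantorovich duality and some Legendre-transform bookkeeping. The engine I would set up first is a spectral identity. Since the process is reversible and, for any $(u,v)\in\Phi_c$, the function $u$ is bounded, the Feynman--Kac semigroup $P_t^{\lambda u}$ is the self-adjoint semigroup on $\mathbb L^2(\mu)$ generated by $L+\lambda u$, so by functional calculus $\|P_t^{\lambda u}\|_{\mathbb L^2(\mu)\to\mathbb L^2(\mu)}=e^{t\Lambda(\lambda u)}$ with $\Lambda(\lambda u):=\sup\mathrm{spec}(L+\lambda u)$ (finite, since $\lambda u$ is a bounded perturbation of $L$). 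Writing a normalized $g\in\mathbb L^2(\mu)$ as $\sqrt f$ with $\nu=f\mu$ a probability measure and using \eqref{eqfisch}, one has $\langle(L+\lambda u)g,g\rangle_{\mathbb L^2(\mu)}=\lambda\,\nu(u)-I(\nu|\mu)$, so the Rayleigh-quotient characterization gives $\Lambda(\lambda u)=\sup_\nu(\lambda\,\nu(u)-I(\nu|\mu))$. Hence statement (3) is precisely the assertion that $\Lambda(\lambda u)\le\lambda\,\mu(v)+\alpha^*(\lambda)$ for every $(u,v)\in\Phi_c$ and $\lambda\ge0$ (the ``in particular'' line being the case $f=\mathbf 1$ combined with Cauchy--Schwarz in $\mathbb L^2(\mu)$ against $d\beta/d\mu$).

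Next I would establish $(1)\Leftrightarrow(3)$. By Kantorovich duality \eqref{eqtrans1}, $T_c(\nu,\mu)\ge\nu(u)-\mu(v)$ for every $(u,v)\in\Phi_c$, with equality after taking the supremum over $\Phi_c$. Assuming (1), since $\alpha$ is increasing with $\alpha(0)=0$ one gets $I(\nu|\mu)\ge\alpha(T_c(\nu,\mu))\ge\alpha((\nu(u)-\mu(v))_+)$, and the Fenchel--Young inequality $\lambda r\le\alpha(r)+\alpha^*(\lambda)$ (for $r,\lambda\ge0$, from the definition of $\alpha^*$) turns this into $\lambda\,\nu(u)-I(\nu|\mu)\le\lambda\,\mu(v)+\alpha^*(\lambda)$, i.e.\ $\Lambda(\lambda u)\le\lambda\mu(v)+\alpha^*(\lambda)$, which is (3). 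Conversely, from (3) one has $\lambda(\nu(u)-\mu(v))-\alpha^*(\lambda)\le I(\nu|\mu)$ for all $\lambda\ge0$; taking the supremum over $\lambda\ge0$ and then over $(u,v)\in\Phi_c$ and invoking biconjugation (for which one uses convexity of $\alpha$, as in \cite{GLWY}; otherwise $\alpha$ gets replaced by its convex minorant) recovers $\alpha(T_c(\nu,\mu))\le I(\nu|\mu)$.

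Then for $(3)\Leftrightarrow(2)$, the forward direction $(3)\Rightarrow(2)$ is a Chernoff bound: for $\lambda\ge0$, Markov's inequality together with the ``in particular'' bound give
$$\mathbb P_\beta\Big(\frac{1}{t}\int_0^t u(X_s)\,ds\ge\mu(v)+R\Big)\le e^{-\lambda t(\mu(v)+R)}\,\mathbb E_\beta\big[e^{\lambda\int_0^t u(X_s)ds}\big]\le\|d\beta/d\mu\|_{\mathbb L^2(\mu)}\,e^{-t(\lambda R-\alpha^*(\lambda))},$$
and optimizing over $\lambda\ge0$ (again biconjugation, $\sup_{\lambda\ge0}(\lambda R-\alpha^*(\lambda))=\alpha(R)$) yields (2). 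For the converse $(2)\Rightarrow(3)$ I would appeal to the Cram\'er-type argument of \cite{GLWY}: one reconstructs the Laplace-transform bound from the family of tail bounds over all $t$ and $R$, using the Markov property to propagate the estimate in time.

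The step I expect to be the main obstacle --- and the reason I would lean on \cite{GLWY} rather than redo it in full --- is the spectral identity of the first paragraph: verifying that the probabilistically defined Feynman--Kac semigroup really is $e^{t(L+\lambda u)}$, that $\|e^{t(L+\lambda u)}\|_{\mathbb L^2(\mu)\to\mathbb L^2(\mu)}=e^{t\Lambda(\lambda u)}$ by the spectral theorem, and that $\Lambda(\lambda u)$ is given \emph{exactly} by the Rayleigh-quotient formula (a form-core density point). Because the potentials in $\Phi_c$ are bounded, $\lambda u$ is a bounded perturbation of the self-adjoint generator $L$ and these points are classical, but they must be stated carefully; likewise the $(2)\Rightarrow(3)$ implication is the only genuinely non-formal step and is best quoted from \cite{GLWY}.
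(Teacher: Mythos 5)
Your proposal is correct and takes essentially the same route as the paper, which gives no proof of its own but states Theorem \ref{thm1cgwy} as a direct consequence of Theorem 2.2 in \cite{GLWY}; your reconstruction (the Donsker--Varadhan/Rayleigh-quotient identity for the Feynman--Kac semigroup, Kantorovich duality plus Fenchel--Young for $(1)\Leftrightarrow(3)$, a Chernoff bound for $(3)\Rightarrow(2)$, and deferring the Cram\'er-type converse $(2)\Rightarrow(3)$ to that reference) is precisely the argument being invoked there. The two subtleties you flag --- the self-adjointness/form-core justification of the spectral identity for the bounded potentials of $\Phi_c$, and the convexity of $\alpha$ needed in the biconjugation steps --- are exactly the hypotheses under which \cite{GLWY} operates, and are harmless for the paper's applications, which only use $\alpha(s)=s^2/C$.
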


When curvature is bounded from below by a positive constant, a quadratic transport-information inequality holds, with the $L^2$ Wasserstein distance. Concentration estimates for additive functionals can also be directly derived from positive curvature \cite{JO10}. However, positive curvature is not a necessary condition, and we can also use non-quadratic inequalities. {There are also direct approaches starting from Poincar\'e inequality such as \cite{CGesaim} with bounded curvature or \cite{AJLR} only requiring  curvature bounded from below, but whose constants may not lead to a strict improvement of our results.}

\begin{definition}\label{deflip}
Let $c$ be a cost function and $u$ a measurable function. We define $$\parallel u\parallel_c := \, \sup_{x \neq y} \, \frac{|u(x)-u(y)|}{c(x,y)} \, \in [0, \infty].$$ 
\end{definition}

\begin{remark}\label{remlipc} 
Notice that a simple monotone convergence argument allows us to extend Theorem \ref{thm1cgwy} to non necessarily bounded functions $u$ such that $\parallel u\parallel_c =1$. This will be used in the sequel without further mention. \hfill $\diamondsuit$
\end{remark}

When $u$ is bounded from below we can derive another bound
\begin{corollary}\label{propcontroleexp}
Let $\alpha:[0,+\infty) \to [0,+\infty)$ be a continuous increasing function with $\alpha(0)=0$. Let $u$ be a measurable function bounded from below by $u_0 \in \mathbb R$ and such that $\mu(u)>0$. Then for all initial measure $\beta \ll \mu$ such that $d\beta/d\mu \in \mathbb L^2(\mu)$, all $\lambda$ and $t$ strictly positive, all $1>\varepsilon >0$
$$\mathbb{E}_{\beta}\left[\exp\left(- \, \int_0^t{ \lambda \, u(X_s)ds}\right)\right] \leq $$ $$\left(1 + ||d\beta/d\mu||_{\mathbb L^2(\mu)}\right) \, \max\left(e^{- \, \lambda \, \varepsilon \, \mu(u) \, t} \; , \; e^{- \, t \, (\lambda \, u_0  \, + \, \alpha((1-\varepsilon) \mu(u)/||u||_{c}))}\right) \, .$$  
\end{corollary}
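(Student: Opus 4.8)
The idea is to split the Laplace transform $\E_\beta[\exp(-\lambda\int_0^t u(X_s)\,ds)]$ according to whether the empirical average $\frac1t\int_0^t u(X_s)\,ds$ is above or below a threshold of order $\varepsilon\,\mu(u)$, and to control each piece by a different elementary bound. On the event $\{\frac1t\int_0^t u(X_s)\,ds \geq \varepsilon\,\mu(u)\}$ we simply bound the integrand pointwise by $e^{-\lambda\varepsilon\mu(u)t}$ and the probability of the event by $1$; this produces the first term in the maximum. On the complementary event $\{\frac1t\int_0^t u(X_s)\,ds < \varepsilon\,\mu(u)\}$ we bound the integrand pointwise by $e^{-\lambda u_0 t}$ (using $u\geq u_0$) and must then estimate the probability $\P_\beta(\frac1t\int_0^t u(X_s)\,ds < \varepsilon\,\mu(u))$.

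For this last probability I would apply the deviation estimate of Theorem \ref{thm1cgwy}(2) to the function $-u/\|u\|_c$, which has $\|{-u/\|u\|_c}\|_c = 1$, so that by Remark \ref{remlipc} it is admissible even if $u$ is unbounded. Taking $v$ to be the constant $\mu(-u/\|u\|_c) = -\mu(u)/\|u\|_c$ in the pair $(u,v)\in\Phi_c$ (note $c(x,x)=0$ forces $u(x)-v(y)\leq c(x,y)$ to hold with this choice), the event $\{\frac1t\int_0^t u(X_s)\,ds < \varepsilon\mu(u)\}$ is exactly $\{\frac1t\int_0^t (-u/\|u\|_c)(X_s)\,ds > -\varepsilon\mu(u)/\|u\|_c\} = \{\frac1t\int_0^t (-u/\|u\|_c)(X_s)\,ds > \mu(v) + (1-\varepsilon)\mu(u)/\|u\|_c\}$. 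Hence with $R = (1-\varepsilon)\mu(u)/\|u\|_c$, part (2) of Theorem \ref{thm1cgwy} gives this probability is at most $\|d\beta/d\mu\|_{\L^2(\mu)}\, e^{-t\alpha(R)}$. Multiplying by the pointwise bound $e^{-\lambda u_0 t}$ on this event yields the second term in the maximum, times $\|d\beta/d\mu\|_{\L^2(\mu)}$.

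Adding the two contributions gives a bound by $e^{-\lambda\varepsilon\mu(u)t} + \|d\beta/d\mu\|_{\L^2(\mu)}\, e^{-t(\lambda u_0 + \alpha((1-\varepsilon)\mu(u)/\|u\|_c))}$, and bounding this sum by $(1+\|d\beta/d\mu\|_{\L^2(\mu)})$ times the maximum of the two exponentials finishes the argument. The only genuinely delicate point is the bookkeeping needed to see that the threshold event for $u$ translates correctly into the form $\{\text{average} \geq \mu(v) + R\}$ required by Theorem \ref{thm1cgwy}(2) after rescaling by $\|u\|_c$ and flipping sign; once the admissible pair $(u,v)$ and the value of $R$ are pinned down, the rest is routine. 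One should also note the degenerate cases $\|u\|_c = 0$ (then $u$ is constant, $u=\mu(u)>0$, and the bound is trivial) and $\|u\|_c = \infty$ (then only the first term survives, which is consistent since $\alpha$ is applied to $0$), though strictly these can be excluded or handled by the convention $\alpha(0)=0$.
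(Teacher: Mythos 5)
Your proof is correct and follows essentially the same route as the paper: the paper introduces the event $A=\{-\int_0^t\lambda(u-\mu(u))(X_s)\,ds\geq Rt\}$ with $R=\lambda(1-\varepsilon)\mu(u)$, which is exactly your threshold event $\{\frac1t\int_0^t u(X_s)\,ds\leq\varepsilon\mu(u)\}$, bounds the integrand by $e^{-\lambda u_0 t}$ on $A$ and by $e^{-(\lambda\mu(u)-R)t}=e^{-\lambda\varepsilon\mu(u)t}$ on $A^c$, and controls $\mathbb P_\beta(A)$ by Theorem \ref{thm1cgwy}(2) applied to $-u/\|u\|_c$, just as you do.

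One small correction to your bookkeeping: the admissible pair in $\Phi_c$ is not $(\tilde u, \mu(\tilde u))$ with $\tilde v$ \emph{constant}. Since $c(x,x)=0$, the constraint $\tilde u(x)-\tilde v(y)\leq c(x,y)$ with constant $\tilde v$ would force $\tilde u\leq\mu(\tilde u)$ everywhere, i.e.\ $\tilde u$ constant; your parenthetical justification is backwards. The correct choice is $\tilde v=\tilde u=-u/\|u\|_c$ itself, for which $\|\tilde u\|_c=1$ gives $(\tilde u,\tilde u)\in\Phi_c$ (extended to unbounded $u$ via Remark \ref{remlipc}); since $\mu(\tilde v)=-\mu(u)/\|u\|_c$ is the same number you used, nothing downstream changes.
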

\begin{proof} Introduce the set $$A = \left\{- \, \int_0^t \, \lambda \, (u -\mu(u))(X_s) \, ds \, \geq  \, Rt \right\} \, .$$ 
We thus have,  using Theorem \ref{thm1cgwy},
\begin{eqnarray}\label{eqw1exp}
E_\beta\left(e^{- \, \int_0^t \, \lambda \, u(X_s)  \, ds}\right)&\leq& e^{- \, \lambda \, u_0 \, t} \, \mathbb P_\beta(A) \,  + \, e^{- \lambda \, \mu(u) \, t} \, E_\beta\left(e^{- \, \int_0^t \, \lambda \, (u -\mu(u))(X_s)  \, ds} \, \mathbf 1_{A^c}\right) \nonumber \\ &\leq& ||d\beta/d\mu||_{\mathbb L^2(\mu)} \; e^{t \, (- \, \lambda \, u_0  \, - \, \alpha(R/\lambda||u||_{c}))} \, + \, e^{- \, (\lambda \, \mu(u) \, - \, R) \, t} \, .
\end{eqnarray}
 Choosing $R=\lambda \, \mu(u) \, (1-\varepsilon)$ gives the result.
\end{proof}
\medskip

\begin{remark} It is also possible to obtain exponential convergence of additive functionals via Lyapunov-type conditions, see for example \cite{KM05, FRS18}, but the constants obtained there are not so explicit, and typically depend on the dimension, so they are not so suitable for our purpose here. 
\end{remark}

\subsection{Rates of convergence in $W_1$ distance. \\ \\}
We start with an immediate application of what precedes: 

\begin{theorem}\label{propkappalip}
Assume that $V$ is $C^2$, that \eqref{eqBEreinf} is satisfied with some function $\kappa$ which satisfies $\parallel \kappa\parallel_c <+\infty$, bounded from below by some constant $\kappa_0 \in \mathbb R$ and such that $\mu(\kappa)>0$. Assume that $\mu$ satisfies a $WI(\alpha,c)$ inequality for some function $\alpha$. Then, for all $\nu$ such that $d\nu/d\mu \in \mathbb L^2(\mu)$, all $t>0$, $$W_1(P_t^*\nu\, , \, \mu) \, \leq \, C(\nu) \, e^{- \, \theta \, t} \, ,$$ in the following cases:
\begin{enumerate}
\item[(1)] \quad If $\alpha^*$ satisfies 
\begin{equation}\label{eqgoodalphastar}
\mu(\kappa) > \frac{\alpha^*(2 \, \parallel \kappa \parallel_c)}{2} \, ,
\end{equation}
then for any $r>2$ such that $\mu(\kappa) > \frac{\alpha^*(r \, ||\kappa||_{c})}{r}$, and $p=r/(r-2)$, we may choose $$ C(\nu) = ||d\nu/d\mu||_{\mathbb L^2(\mu)}^{\frac{1}{r}} \, W_p(\nu,\mu)$$ and $$\theta = 2 \, (\mu(\kappa) \, - \, (\alpha^*(r \, \parallel \kappa \parallel_c)/r )) \, .$$ 
\item[(2)] \quad If $\alpha$ satisfies 
\begin{equation}\label{eqgoodalpha1}
2 \, \kappa_0+\alpha(\mu(\kappa)/\parallel \kappa \parallel_c) > \, 0 \, ,
\end{equation}
then for any $r>2$, $1>\varepsilon>0$ such that $r \, \kappa_0+\alpha((1-\varepsilon)\mu(\kappa)/\parallel \kappa \parallel_c) >0$, and $p=r/(r-2)$, we may choose $$C(\nu) \, = \, (1+||d\nu/d\mu||_{\mathbb L^2(\mu)})^{\frac 1r} \, W_p(\nu,\mu) \, ,$$ and $$\theta \, = \, 2 \,  \min \left(\varepsilon \, \mu(\kappa) \, , \, \kappa_0 + \frac 1r \, \alpha((1-\varepsilon) \mu(\kappa)/||\kappa||_{c})\right) \, .$$
\end{enumerate}
\end{theorem}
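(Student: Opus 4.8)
The plan is to combine the synchronous-coupling contraction estimate of Proposition~\ref{propBEreinf} with exponential moment bounds for the additive functional $\int_0^t\kappa(X_s)\,ds$ coming from the $WI(\alpha,c)$ inequality. Since $2\kappa\le\rho_m$ and $\kappa\ge\kappa_0$, the curvature is bounded from below, so the process is conservative and Proposition~\ref{propBEreinf} is available. There I would take the two Laplace-transform exponents equal, $q=r$ with $r>2$ and $p=r/(r-2)$ (so that $\frac1p+\frac1q+\frac1r=1$), which gives
\[
W_1(P_t^*\nu,\mu)\ \le\ W_p(\nu,\mu)\,\Big(\mathbb E_\mu\big(e^{-\int_0^t r\,\kappa(X_s)\,ds}\big)\Big)^{1/r}\Big(\mathbb E_\nu\big(e^{-\int_0^t r\,\kappa(X_s)\,ds}\big)\Big)^{1/r}.
\]
Everything then reduces to estimating $\mathbb E_\beta\big(e^{-\int_0^t r\,\kappa(X_s)\,ds}\big)$ for $\beta=\mu$ and $\beta=\nu$; the stated $C(\nu)$ and $\theta$ will pop out after taking the $1/r$-th power and multiplying the two factors, noting that for $\beta=\mu$ the prefactor $\|d\mu/d\mu\|_{\mathbb L^2(\mu)}=1$.

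For part (1) I would apply the last inequality in Theorem~\ref{thm1cgwy}(3) to the function $-\kappa$. Assuming $\|\kappa\|_c>0$ (if $\|\kappa\|_c=0$ then $\kappa$ is the positive constant $\kappa_0$ and one is in the classical uniformly convex situation), put $u=v=-\kappa/\|\kappa\|_c$, so that $\|u\|_c=1$, hence $(u,v)\in\Phi_c$ after the extension recorded in Remark~\ref{remlipc}, with $\mu(v)=-\mu(\kappa)/\|\kappa\|_c$. Choosing the parameter there to be $\lambda=r\|\kappa\|_c>0$ yields
\[
\mathbb E_\beta\big(e^{-\int_0^t r\,\kappa(X_s)\,ds}\big)\ \le\ \|d\beta/d\mu\|_{\mathbb L^2(\mu)}\,\exp\!\big[t\big(-r\,\mu(\kappa)+\alpha^*(r\|\kappa\|_c)\big)\big],
\]
and plugging this into the previous display produces exactly $C(\nu)=\|d\nu/d\mu\|_{\mathbb L^2(\mu)}^{1/r}W_p(\nu,\mu)$ and $\theta=2\big(\mu(\kappa)-\alpha^*(r\|\kappa\|_c)/r\big)$. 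It only remains to see that $\theta>0$ for some admissible $r>2$: since $\alpha^*$ is convex with $\alpha^*(0)=0$, the map $\lambda\mapsto\alpha^*(\lambda)/\lambda$ is nondecreasing and continuous where finite, so the strict inequality \eqref{eqgoodalphastar} at the endpoint $r=2$ persists for $r$ slightly above $2$.

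For part (2) I would instead invoke Corollary~\ref{propcontroleexp} with $u=\kappa$, $\lambda=r$, $u_0=\kappa_0$ and $\mu(u)=\mu(\kappa)>0$ (and a free parameter $\varepsilon\in(0,1)$), giving
\[
\mathbb E_\beta\big(e^{-\int_0^t r\,\kappa(X_s)\,ds}\big)\ \le\ \big(1+\|d\beta/d\mu\|_{\mathbb L^2(\mu)}\big)\,\max\!\big(e^{-r\varepsilon\mu(\kappa)t},\,e^{-t(r\kappa_0+\alpha((1-\varepsilon)\mu(\kappa)/\|\kappa\|_c))}\big).
\]
Taking the $1/r$-th power turns the second exponent into $\kappa_0+\frac1r\alpha((1-\varepsilon)\mu(\kappa)/\|\kappa\|_c)$ and the maximum of two decaying exponentials into one with rate the minimum of the two; multiplying the $\beta=\mu$ and $\beta=\nu$ versions and inserting them into the contraction estimate gives the announced $C(\nu)$ (up to the harmless constant $2^{1/r}\le\sqrt2$ coming from the $\beta=\mu$ factor) and $\theta=2\min\big(\varepsilon\mu(\kappa),\,\kappa_0+\frac1r\alpha((1-\varepsilon)\mu(\kappa)/\|\kappa\|_c)\big)$. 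Positivity of $\theta$ needs $\varepsilon\mu(\kappa)>0$, which is automatic, together with $r\kappa_0+\alpha((1-\varepsilon)\mu(\kappa)/\|\kappa\|_c)>0$, which by continuity of $\alpha$ holds for suitable $r>2$ and $\varepsilon\in(0,1)$ as soon as \eqref{eqgoodalpha1} holds (the latter being the $r=2,\ \varepsilon=0$ endpoint case). The only genuinely delicate points are the sign and normalization bookkeeping when converting $\kappa$ into a function with unit $\|\cdot\|_c$-seminorm inside the Kantorovich pairing $\Phi_c$, together with the appeal to Remark~\ref{remlipc} to handle a possibly unbounded $\kappa$; the remaining steps are the elementary Hölder and monotonicity/continuity arguments just described.
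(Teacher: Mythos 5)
Your proof is correct and follows essentially the same route as the paper's: Proposition \ref{propBEreinf} with $q=r$ and $p=r/(r-2)$, then Theorem \ref{thm1cgwy}(3) applied to $u=v=-\kappa/\|\kappa\|_c$ with $\lambda=r\|\kappa\|_c$ for part (1), and Corollary \ref{propcontroleexp} with $\lambda=r$ for part (2). Your observation about the extra harmless factor $2^{1/r}$ in $C(\nu)$ for part (2) (coming from the $\beta=\mu$ term in Corollary \ref{propcontroleexp}) is a fair point about the constant as stated, which the paper's terse proof does not address.
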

\begin{proof}
According to Proposition \ref{propBEreinf}, it is enough to control $\mathbb E_\nu^{\frac 1r}\left(e^{- \, \int_0^t \, r \, \kappa(X_s)  \, ds}\right)$ for some well chosen $r\geq 2$. 

Part (1) is an immediate consequence of (3) in Theorem \ref{thm1cgwy} applied with $u=-\kappa/\parallel \kappa \parallel_c$ and $\lambda= r \, \parallel \kappa \parallel_c$.

Part (2) follows from Corollary \ref{propcontroleexp}. When $r \, \kappa_0+\alpha((1-\varepsilon)\mu(\kappa)/\parallel \kappa \parallel_c) >0$, both exponential terms go to $0$, so that the exponential decay rate is given by $$\min \left(\varepsilon \, \mu(\kappa) \, , \, \kappa_0 + \frac 1r \, \alpha((1-\varepsilon) \mu(\kappa)/||\kappa||_{c})\right) \, .$$
\end{proof}

We thus have a balance between the rate of exponential convergence and the initial control i.e. $r$ and $p$ or $\varepsilon$. Note that since we want some regularity for $\kappa$, it may be that the one to use is not the one that optimizes \eqref{eqBEreinf}, but some smoother minorant. 

One can notice that in case (2) the rate is always less than or equal to $\varepsilon \, \mu(\kappa)$ and that it can be achieved by choosing $$r(\varepsilon)=\frac{\alpha((1-\varepsilon) \mu(\kappa)/||\kappa||_{c})}{\varepsilon \mu(\kappa) - \kappa_0 } \, ,$$ provided the latter is larger than $2$ and $\varepsilon \mu(\kappa) - \kappa_0 >0$. We shall come back to this issue in Section \ref{secapplis}.
\medskip

In order to extend the result to more general initial measures we will use in addition the following regularization result
\begin{proposition}\label{propregul}
Assume that $V$ is $C^2$ and that the process is conservative. Then for all $x \in \mathbb R^n$ and $t>0$, $P^*_t \delta_x(dy) \ll dy$ and its density $p(t,x,y)$ is continuous (in $y$).

We will denote $r(t,x,y)=p(t,x,y) \, e^{V(y)}$ the density of $P^*_t \delta_x$ w.r.t. $\mu$. Notice that $r(t,x,y)=r(t,y,x)$.
\end{proposition}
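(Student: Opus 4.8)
The plan is to establish the absolute continuity of $P_t^*\delta_x$ and the regularity of its density via a standard parabolic smoothing argument, using that the generator $L = \Delta - \nabla V\cdot\nabla$ is a uniformly elliptic operator with $C^1$ coefficients (since $V$ is $C^2$). First I would observe that, because the process is conservative, the transition kernel $P_t^*\delta_x$ is a genuine probability measure, and the Kolmogorov equations hold: $u(t,y) := p(t,x,y)$ (if it exists) should solve the forward (Fokker–Planck) equation $\partial_t u = \dvg(\nabla u + u\nabla V)$, while $v(t,y):=P_tf(y)$ solves the backward equation $\partial_t v = Lv$. The existence of a density with respect to Lebesgue measure follows from classical results on fundamental solutions of uniformly parabolic equations (e.g. Friedman, or Aronson's Gaussian bounds): since the diffusion coefficient is the identity (non-degenerate) and the drift $-\nabla V$ is locally Lipschitz and locally bounded, a fundamental solution $\Gamma(t,x,y)$ exists, is jointly continuous for $t>0$, strictly positive, and satisfies two-sided Gaussian estimates on compact time-space regions; then $p(t,x,y)=\Gamma(t,x,y)$.

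Next I would deduce the representation with respect to $\mu$. Writing $r(t,x,y) = p(t,x,y)e^{V(y)}$, one has $P_t^*\delta_x(dy) = p(t,x,y)\,dy = r(t,x,y)\,\mu(dy)$ (up to the normalizing constant $Z$, which can be absorbed), so $r(t,x,y)$ is the density of $P_t^*\delta_x$ with respect to $\mu$, and it inherits continuity in $y$ from that of $p$ and of $e^V$ (which is continuous since $V$ is $C^2$, hence continuous). Finally, the symmetry $r(t,x,y)=r(t,y,x)$ is exactly the statement that the semigroup is self-adjoint in $L^2(\mu)$: reversibility of $\mu$ gives $\int (P_tf)g\,d\mu = \int f(P_tg)\,d\mu$ for, say, bounded continuous $f,g$, and plugging in the kernel representation $P_tf(x) = \int f(y)r(t,x,y)\,\mu(dy)$ and using the arbitrariness of $f,g$ together with the continuity of $r$ in both variables forces $r(t,x,y)=r(t,y,x)$ for all $x,y$. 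Reversibility itself is already noted in the paper (just after Definition \ref{defcurve}) as a consequence of the conservativity ensured by the curvature lower bound, or can be checked directly since $L$ is symmetric on $C_c^\infty$ against $\mu$.

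The main obstacle, or rather the only delicate point, is justifying the existence and joint continuity of the density when $\nabla V$ is merely locally Lipschitz (from $V\in C^2$) rather than globally bounded or globally Lipschitz: one cannot quote a single off-the-shelf global theorem. I would handle this by a localization: on any bounded domain the coefficients are bounded and the classical parabolic theory (Friedman, \emph{Partial Differential Equations of Parabolic Type}, or the Aronson estimates) applies to give a local fundamental solution; conservativity of the process — which is guaranteed here by the Lyapunov function $\psi(x)=|x|^2$ satisfying $L\psi \le K\psi + a^2$, as already noted in the introduction — prevents mass escaping to infinity and lets one patch the local statements into a global transition density $p(t,x,\cdot)$ that is a probability density, continuous in $y$. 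Alternatively, and more probabilistically, one can invoke that the conservative diffusion with $C^2$ potential admits smooth transition densities by Hörmander's theorem (trivially satisfied here, the noise being elliptic) combined with standard non-explosion arguments; either route is routine and I would only sketch it, referring to the literature rather than reproving parabolic regularity.
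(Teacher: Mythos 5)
Your proposal is correct in outline, but it takes a different route from the paper: the paper does not actually prove this proposition at all — it records that the statement "is certainly well known in P.D.E. theory" and delegates it to \cite[Proposition 4.2]{MNS}, where the proof is based on Malliavin calculus (essentially the Hörmander/ellipticity route you mention only as an alternative at the end). Your primary argument instead goes through classical parabolic theory: local fundamental solutions for the uniformly elliptic, locally-bounded-coefficient operator on balls, patched together using conservativity (via the Lyapunov function $\psi(x)=|x|^2$) so that no mass is lost in the limit. That patching step — comparing with the killed semigroups on an exhausting sequence of balls and passing to the monotone limit, with interior parabolic regularity giving continuity of the limit — is the only genuinely delicate point, and you correctly identify it; it is standard but does deserve the sentence you give it, since no single off-the-shelf global theorem applies to a merely locally Lipschitz drift. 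The Malliavin/Hörmander route that the paper relies on buys smoothness of the density directly from ellipticity plus non-explosion, without any localization of the coefficients, which is why the authors prefer to cite it; your PDE route is more elementary and additionally yields local two-sided Gaussian bounds, which the paper does not need. Your derivation of the symmetry $r(t,x,y)=r(t,y,x)$ from reversibility of $\mu$, upgraded from $\mu\otimes\mu$-a.e. equality to everywhere by joint continuity of the kernel, is exactly the standard argument and fills in a step the paper leaves as a bare "Notice that".
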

This result is certainly well known in P.D.E. theory. The assumptions we are using are those of \cite[Proposition 4.2]{MNS}, where the proof is based on Malliavin calculus.

Then, as argued in \cite{CGZ}, symmetry combined with the Chapman-Kolmogorov relation and the continuity of $r$ yield for all $t>0$, $$\int \, r^2(t,x,y) \, \mu(dy) = \int \, r(t,x,y) \, r(t,y,x) \, \mu(dy) =  r(2t,x,x) \, < +\infty \, .$$ If $\nu$ is any probability measure, $P_t^*\nu$ is thus absolutely continuous w.r.t. $\mu$ with density $r(t,\nu,y)=\int \, r(t,x,y) \, \nu(dx)$ which belongs to $\mathbb L^2(\mu)$ as soon as $\int \, r(2t,x,x) \, \nu(dx) < +\infty$. 
\smallskip

It follows that for all $\eta >0$, 
\begin{eqnarray*}
\mathbb E_\nu\left(e^{- \, r \, \int_0^t  \, \kappa(X_s)  \, ds}\right)&=& \mathbb E_\nu\left(e^{- \, r \, \int_0^\eta  \, \kappa(X_s)  \, ds} \, \mathbb E_{X_\eta}\left(e^{- \, r \, \int_0^{t-\eta}  \, \kappa(X'_s)  \, ds}\right)\right) \\ &\leq& e^{- r \, \kappa_0 \, \eta} \, \mathbb E_{P^*_\eta \nu}\left(e^{- \, r \, \int_0^{t-\eta}  \, \kappa(X_s)  \, ds}\right)
\end{eqnarray*}
Hence as before we easily get
\begin{eqnarray*}
\mathbb E_\nu\left(e^{- \, \int_0^t  \, \kappa(X_s)  \, ds}\right) &\leq& \mathbb E_\nu^{\frac 1r} \left(e^{- \, r \, \int_0^t  \, \kappa(X_s)  \, ds}\right)\\ &\leq& e^{- \, \kappa_0 \, \eta} \, \mathbb E_{P^*_\eta \nu}^{\frac 1r}\left(e^{- \, r \, \int_0^{t-\eta}  \, \kappa(X_s)  \, ds}\right)
\end{eqnarray*}
and we may argue as in the previous proof (using that the $\mathbb L^2$ norm of a probability density is larger than $1$) to obtain
\begin{corollary}\label{corkappalip}
The conclusions of Theorem \ref{propkappalip}, are true for any probability measure $\nu$, any $t>\eta>0$, with the same $\theta $ but $$C(\nu) \, = \, 2^{\frac 1r} \, \left(\int \, r(2\eta,x,x) \, \nu(dx)\right)^{\frac{1}{2r}} \; e^{(\theta - \kappa_0) \, \eta} \; W_p(\nu,\mu) \, ,$$ where $r(\eta,.,.)$ is defined in proposition \ref{propregul}.
\end{corollary}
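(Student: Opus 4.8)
The statement to establish is Corollary~\ref{corkappalip}, which extends Theorem~\ref{propkappalip} from initial measures with $d\nu/d\mu \in \mathbb{L}^2(\mu)$ to arbitrary probability measures, at the cost of running the semigroup for a burn-in time $\eta$. The key observation, already assembled in the paragraphs immediately preceding the statement, is the two-step decomposition of the Laplace transform: using the Markov property at time $\eta$ and the lower bound $\kappa \geq \kappa_0$,
\begin{equation*}
\mathbb{E}_\nu\!\left(e^{- r \int_0^t \kappa(X_s)\, ds}\right) \leq e^{-r\kappa_0 \eta}\; \mathbb{E}_{P^*_\eta \nu}\!\left(e^{- r \int_0^{t-\eta} \kappa(X_s)\, ds}\right),
\end{equation*}
together with Proposition~\ref{propregul}, which guarantees that $P^*_\eta \nu$ has an $\mathbb{L}^2(\mu)$ density with norm controlled by $\int r(2\eta,x,x)\,\nu(dx)$.

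First I would plug $P^*_\eta \nu$ into the role played by the ``good'' initial measure $\beta$ in Theorem~\ref{thm1cgwy}~(3) / Corollary~\ref{propcontroleexp}, exactly as in the proof of Theorem~\ref{propkappalip}: apply part~(1) with $u = -\kappa/\|\kappa\|_c$, $\lambda = r\|\kappa\|_c$, or part~(2) via Corollary~\ref{propcontroleexp}, now with $\beta = P^*_\eta\nu$. This yields a bound on $\mathbb{E}_{P^*_\eta\nu}(e^{-r\int_0^{t-\eta}\kappa(X_s)ds})$ with the decay rate $r\theta$ (where $\theta$ is the rate of Theorem~\ref{propkappalip}) and a prefactor involving $\|d(P^*_\eta\nu)/d\mu\|_{\mathbb{L}^2(\mu)}$, respectively $1 + \|d(P^*_\eta\nu)/d\mu\|_{\mathbb{L}^2(\mu)}$. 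Since an $\mathbb{L}^2(\mu)$ probability density has norm $\geq 1$, in case~(1) one can absorb the additive-versus-multiplicative discrepancy and write $\|d(P^*_\eta\nu)/d\mu\|_{\mathbb{L}^2(\mu)} \leq (\int r(2\eta,x,x)\,\nu(dx))^{1/2}$, and in case~(2) bound $1 + \|\cdot\|_{\mathbb{L}^2(\mu)} \leq 2\|\cdot\|_{\mathbb{L}^2(\mu)} \leq 2(\int r(2\eta,x,x)\,\nu(dx))^{1/2}$; taking the $1/r$ power produces the factor $2^{1/r}(\int r(2\eta,x,x)\,\nu(dx))^{1/(2r)}$ in $C(\nu)$.

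Next I would reassemble everything through Proposition~\ref{propBEreinf} (applied with the Hölder exponents $p = r/(r-2)$, $q = r$, and the $\mu$-side handled as in Theorem~\ref{propkappalip}, where $\mu$ itself is an admissible initial measure so no burn-in is needed there). Collecting the constants: the $e^{-r\kappa_0\eta}$ from the Markov step, raised to the $1/r$ power, gives $e^{-\kappa_0\eta}$; multiplying by $e^{-\theta(t-\eta)} = e^{\theta\eta}e^{-\theta t}$ from the decay estimate on $P^*_\eta\nu$ produces the combined factor $e^{(\theta - \kappa_0)\eta}$; and the $W_p(\nu,\mu)$ factor comes straight from Proposition~\ref{propBEreinf} as before. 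This reproduces exactly
\begin{equation*}
C(\nu) = 2^{1/r}\left(\int r(2\eta,x,x)\,\nu(dx)\right)^{1/(2r)} e^{(\theta-\kappa_0)\eta}\, W_p(\nu,\mu).
\end{equation*}

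**Main obstacle.** The analytic content — the WI inequality, the concentration bound, the coupling estimate — is all imported from the earlier results, so there is no hard inequality left to prove; the work is entirely bookkeeping. The one point requiring care is the legitimacy of the Markov (Chapman--Kolmogorov) step for the Laplace functional when $\nu$ is a \emph{general} probability measure with no a priori integrability: one must know that $t \mapsto \mathbb{E}_{X_\eta}(e^{-r\int_0^{t-\eta}\kappa(X'_s)ds})$ is a genuine function of $X_\eta$ that can be integrated, which is fine because conservativeness (guaranteed by $2\kappa \leq \rho_m$, hence curvature bounded below) makes $P_t$ a well-defined Markov semigroup and $\kappa \geq \kappa_0$ keeps the integrand bounded by $e^{-r\kappa_0(t-\eta)}$. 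A secondary subtlety is ensuring $\int r(2\eta,x,x)\,\nu(dx)$ is the right quantity controlling $\|d(P^*_\eta\nu)/d\mu\|_{\mathbb{L}^2(\mu)}^2$: this is exactly the computation displayed before Proposition~\ref{propregul}'s discussion, namely $\int r^2(\eta,x,y)\mu(dy) = r(2\eta,x,x)$ combined with convexity (Jensen) in the mixing over $x \sim \nu$. Once these two points are noted, the rest is substitution.
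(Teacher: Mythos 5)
Your proof is correct and follows essentially the same route as the paper: the Markov-property decomposition at time $\eta$ with the $e^{-r\kappa_0\eta}$ bound, the regularization via Proposition \ref{propregul} and the Chapman--Kolmogorov/Jensen identity $\int r^2(\eta,\nu,y)\,\mu(dy)\le\int r(2\eta,x,x)\,\nu(dx)$, the re-application of the argument of Theorem \ref{propkappalip} to $\beta=P^*_\eta\nu$, and the absorption of the additive term via $1+\|\cdot\|_{\mathbb L^2(\mu)}\le 2\,\|\cdot\|_{\mathbb L^2(\mu)}$ are exactly the steps the authors sketch before stating the corollary. The only (harmless) looseness is in attributing the full decay $e^{-\theta(t-\eta)}$ to the $P^*_\eta\nu$ factor when half the rate actually comes from the $\mu$-side expectation run over the full time $t$; this only affects the $e^{\theta\eta}$ bookkeeping and not the validity of the stated constant.
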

\medskip

In order to consider the more general curvature assumption \eqref{eqBEvar} we would have to get uniform bounds in $x$ when the initial measure is $\delta_x$ (see \eqref{eqw1-1}).  We can follow the same path as before provided $\rho$ satisfies the same assumptions as $\kappa$. Notice that this time we only need $r \geq 1$.
\begin{theorem}\label{proprholip}
Assume that $V$ is $C^2$, that \eqref{eqBEvar} is satisfied with some function $\rho$ which satisfies $\parallel \rho\parallel_c <+\infty$, is bounded from below by some constant $\rho_0 \in \mathbb R$ and such that $\mu(\rho)>0$. Assume moreover that $\rho$ is continuous or $V$ is $C^3$ and $\rho$ is locally bounded. Finally, assume that $\mu$ satisfies a $WI(\alpha,c)$ inequality for some function $\alpha$. Then, for all $\nu$, all $\eta >0$ and all $t>\eta$, $$W_1(P_t^*\nu\, , \, \mu) \, \leq \, C(\nu) \, e^{- \, \theta \, t} \; W_1(\nu,\mu) ,$$ with $$ C(\nu) =  2^{\frac 1r} \, \left(\sup_x \, r(2\eta,x,x)\right)^{\frac{1}{2r}} \; e^{(\theta - \rho_0) \, \eta}  \, ,$$ in the following cases:
\begin{enumerate}
\item[(1)] \quad If $\alpha^*$ satisfies 
\begin{equation}\label{eqgoodalphastarbis}
\mu(\rho) > \alpha^*(\parallel \rho \parallel_c) \, ,
\end{equation}
for all $r\geq 1$ such that $\mu(\rho) > \frac{\alpha^*(r \, ||\rho||_{c})}{r}$, we may choose $$\theta =  (\mu(\rho) \, - \, (\alpha^*(r \, \parallel \rho \parallel_c)/r )) \, .$$ 
\item[(2)] \quad If $\alpha$ satisfies 
\begin{equation}\label{eqgoodalpha1bis}
\rho_0+\alpha(\mu(\rho)/\parallel \rho \parallel_c) > \, 0 \, ,
\end{equation}
for all $r \geq 1$, $1>\varepsilon>0$ such that $r \, \rho_0+\alpha((1-\varepsilon)\mu(\rho)/\parallel \rho \parallel_c) >0$, we may choose $$\theta \, =  \,  \min \left(\varepsilon \, \mu(\rho) \, , \, \rho_0 + \frac 1r \, \alpha((1-\varepsilon) \mu(\rho)/||\rho||_{c})\right) \, .$$
\end{enumerate}
\end{theorem}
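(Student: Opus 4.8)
The plan is to obtain everything from the gradient commutation estimate of Section~\ref{seccouple} combined with the transport--information machinery of Section~\ref{secadditive}, the only genuinely new ingredient being a short regularization step that turns the Dirac initial datum $\delta_x$ into an admissible measure. \emph{Step 1 (from the gradient bound to $W_1$).} Under the stated regularity hypotheses — either $\rho$ continuous, so Theorem~\ref{thmBEvarcommut2} applies, or $V\in C^3$ with $\rho$ locally bounded, so Theorem~\ref{thmBEvarcommut} applies; in both cases $\rho\ge\rho_0$ forces the curvature to be bounded from below, hence the process to be conservative — every $1$-Lipschitz $f$ satisfies $\parallel|\nabla P_t f|\parallel_\infty\le\sup_x\mathbb E_x(e^{-\int_0^t\rho(X_s)\,ds})$. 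Plugging this into the Kantorovich--Rubinstein dual formula for $W_1$, i.e.\ into \eqref{eqw1-1}, and using $P_t^*\mu=\mu$ gives $W_1(P_t^*\nu,\mu)\le\big(\sup_x\mathbb E_x(e^{-\int_0^t\rho(X_s)\,ds})\big)\,W_1(\nu,\mu)$, so it remains to bound the supremum by $C(\nu)\,e^{-\theta t}$, with a constant that will in fact not depend on $\nu$.

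\emph{Step 2 (regularization and the $WI$ inequality).} Fix $x$ and $r\ge1$. By Jensen, $\mathbb E_x(e^{-\int_0^t\rho(X_s)\,ds})\le\mathbb E_x(e^{-r\int_0^t\rho(X_s)\,ds})^{1/r}$, and splitting at time $\eta$, the Markov property together with $\rho\ge\rho_0$ on $[0,\eta]$ yields $\mathbb E_x(e^{-r\int_0^t\rho(X_s)\,ds})\le e^{-r\rho_0\eta}\,\mathbb E_{P_\eta^*\delta_x}(e^{-r\int_0^{t-\eta}\rho(X_s)\,ds})$. By Proposition~\ref{propregul} the measure $P_\eta^*\delta_x$ has density $r(\eta,x,\cdot)$, and by Chapman--Kolmogorov and the symmetry $r(\eta,x,y)=r(\eta,y,x)$ one gets $\parallel dP_\eta^*\delta_x/d\mu\parallel_{\mathbb L^2(\mu)}^2=\int r(\eta,x,y)r(\eta,y,x)\,\mu(dy)=r(2\eta,x,x)$, so $P_\eta^*\delta_x$ is an admissible initial measure for Theorem~\ref{thm1cgwy} and Corollary~\ref{propcontroleexp}. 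In case~(1) I apply the last assertion of Theorem~\ref{thm1cgwy}(3), extended to unbounded Lipschitz functions via Remark~\ref{remlipc}, with $u=v=-\rho/\parallel\rho\parallel_c\in\Phi_c$ (so $\mu(v)=-\mu(\rho)/\parallel\rho\parallel_c$), $\lambda=r\parallel\rho\parallel_c$ and initial measure $P_\eta^*\delta_x$. In case~(2) I apply Corollary~\ref{propcontroleexp} with $u=\rho$, $u_0=\rho_0$, $\lambda=r$ and initial measure $P_\eta^*\delta_x$, then use $1+r(2\eta,x,x)^{1/2}\le 2\,r(2\eta,x,x)^{1/2}$ (the $\mathbb L^2(\mu)$ norm of a probability density is $\ge1$) and bound the maximum of the two exponentials there by $e^{-\theta(t-\eta)}$ with $\theta=\min(\varepsilon\mu(\rho),\,\rho_0+\alpha((1-\varepsilon)\mu(\rho)/\parallel\rho\parallel_c)/r)$.

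\emph{Step 3 (bookkeeping and conclusion).} In case~(1), inserting the Step~2 bound, taking $r$-th roots and setting $\theta=\mu(\rho)-\alpha^*(r\parallel\rho\parallel_c)/r$ — positive exactly under \eqref{eqgoodalphastarbis} and the stated restriction on $r$ — gives $\mathbb E_x(e^{-\int_0^t\rho(X_s)\,ds})\le r(2\eta,x,x)^{1/(2r)}e^{(\theta-\rho_0)\eta}e^{-\theta t}$. In case~(2) the same manipulation gives $\mathbb E_x(e^{-\int_0^t\rho(X_s)\,ds})\le 2^{1/r}r(2\eta,x,x)^{1/(2r)}e^{(\theta-\rho_0)\eta}e^{-\theta t}$, with $\theta>0$ under \eqref{eqgoodalpha1bis} and the stated restriction on $(r,\varepsilon)$. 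Taking the supremum over $x$, inserting the harmless factor $2^{1/r}\ge1$ in case~(1) for uniformity, and combining with Step~1 yields $W_1(P_t^*\nu,\mu)\le C(\nu)e^{-\theta t}W_1(\nu,\mu)$ with exactly the announced $C(\nu)$ and $\theta$.

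The content of Steps~1 and 3 is essentially a reassembly of results already in place; \textbf{the main obstacle is Step~2}, namely producing, uniformly in $x$, an initial measure with finite and explicitly bounded $\mathbb L^2(\mu)$ density norm so that the transport--information estimates become available. This is where Proposition~\ref{propregul}, reversibility and Chapman--Kolmogorov are genuinely used, and it explains why the final constant carries the factor $\sup_x r(2\eta,x,x)$, which is finite only under a strong (ultra-boundedness type) smoothing hypothesis on the semigroup. A secondary, routine point is checking the admissibility of the pairs used in $\Phi_c$ and carefully propagating the exponents through the two cases.
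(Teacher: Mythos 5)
Your proof is correct and follows exactly the route the paper intends (the paper only sketches it by saying to ``follow the same path as before''): the uniform gradient bound \eqref{eqw1-1} from Theorems \ref{thmBEvarcommut}/\ref{thmBEvarcommut2}, Jensen with exponent $r\ge1$, the split at time $\eta$ with Proposition \ref{propregul} and Chapman--Kolmogorov to get the $\mathbb L^2$ density norm $r(2\eta,x,x)^{1/2}$, and then Theorem \ref{thm1cgwy}(3) in case (1) and Corollary \ref{propcontroleexp} in case (2). The bookkeeping of the exponents, the factor $2^{1/r}$, and the prefactor $e^{(\theta-\rho_0)\eta}$ all check out.
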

\begin{remark}\label{remsupr}
One can think that the assumption used here on the density $r(2\eta,.,.)$ is too strong. Consider for example the Ornstein-Uhlenbeck corresponding to the standard gaussian measure. Then $$r(t,x,x) = c(t) \, e^{- \, \frac 12 \, |x|^2 \, \left(\frac{(1-e^{-t})^2}{1-e^{-2t}} \, - \, 1\right)} = c(t) \, e^{|x|^2 \, e^{-t} (1-e^{-t})} \, ,$$ so that it is not bounded. However if the semi-group is \emph{ultra-bounded}, i.e. maps continuously $\mathbb L^1(\mu)$ in $\mathbb L^\infty(\mu)$ for all $t>0$, then, since $r(\eta,x,.)$ is a density of probability w.r.t. $\mu$, $r(2\eta,x,.)$ is bounded, with a sup norm only depending on $\eta$, and not of $x$. Actually we only need the $(\mathbb L^1,\mathbb L^\infty)$ continuity for a single time $\eta$. \hfill $\diamondsuit$
\end{remark}
\smallskip

\subsection{Entropic pre-factor. \\ \\}

When curvature is nonnegative, we can get a variant of Theorem \ref{propkappalip} with an entropic pre-factor instead of an $L^2$ norm, but at the cost of weakening the rate of convergence. We recall the definition of the relative entropy functional
$$H(\nu|\mu) := \int{g \log g d\mu}; \hspace{3mm} \text{if} \hspace{1mm} d\nu = g d\mu$$
and takes value $+\infty$ if $\nu$ is not absolutely continuous with respect to $\mu$.  

\begin{theorem}
Assume that the transport-information inequality
$$\alpha\left(T_c(\nu, \mu)\right) \leq I(\nu|\mu)$$
holds, and that $||\kappa||_c < \infty$. Assume moreover that $\kappa \geq \kappa_0$ for some $\kappa_0 \in \mathbb R$ and that there exists $q > 1$ such that
$$(q||\kappa||_c)^{-1}\alpha^*(q||\kappa||_c) \leq \mu(\kappa) + \kappa_0 \, .$$ 
Then for any $r, p$ with $\frac 1p + \frac 1q + \frac 1q = 1$ and $\varepsilon \in (0,1)$ such that $(1-\varepsilon) \mu(\kappa) > (q||\kappa||_c)^{-1}\alpha^*(q||\kappa||_c)$, we have
$$W_1(P_t^*\nu, \mu) \leq  W_p(\nu, \mu) \, (H_1(t) + H_2(t))$$ with $$H_1(t)= \left(\frac{\log 2 + H(\nu|\mu)}{t\alpha((1-\varepsilon)/||\kappa||_c)}\right)^{1/r}e^{- \, (\mu(\kappa) + \kappa_0 - (q||\kappa||_c)^{-1}\alpha^*(q||\kappa||_c))t} \, ,$$ and $$H_2(t) = e^{- \, ((1+\varepsilon)\mu(\kappa) - (q||\kappa||_c)^{-1}\alpha^*(q||\kappa||_c))t} \, .$$
\end{theorem}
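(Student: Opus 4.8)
The plan is to run the three-factor Wasserstein estimate of Proposition~\ref{propBEreinf} (that is, \eqref{eqW1xmu}, with the H\"older exponents renamed so that $q$ weights the factor based at the stationary measure and $r$ the one based at $\nu$),
$$W_1(P_t^*\nu,\mu)\;\le\;W_p(\nu,\mu)\;\mathbb E_\mu^{1/q}\!\left(e^{-q\int_0^t\kappa(X_s)\,ds}\right)\,\mathbb E_\nu^{1/r}\!\left(e^{-r\int_0^t\kappa(X_s)\,ds}\right),\qquad 1/p+1/q+1/r=1,$$
which is legitimate because $2\kappa\le\rho_m$ together with $\kappa\ge\kappa_0$ forces the curvature to be bounded below, hence the process to be conservative; note also that the hypotheses force $\mu(\kappa)>0$ (since $\alpha^*\ge 0$ and $(1-\varepsilon)\mu(\kappa)>(q\|\kappa\|_c)^{-1}\alpha^*(q\|\kappa\|_c)$). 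The $\mu$-factor is then handled exactly as in part~(1) of Theorem~\ref{propkappalip}: apply Theorem~\ref{thm1cgwy}(3) with $u=v=-\kappa/\|\kappa\|_c$ and $\lambda=q\|\kappa\|_c$, and use that $\|d\mu/d\mu\|_{\mathbb L^2(\mu)}=1$, to get an estimate of the form $\mathbb E_\mu^{1/q}(e^{-q\int_0^t\kappa})\le e^{-\theta_\mu t}$ with $\theta_\mu=\mu(\kappa)-(q\|\kappa\|_c)^{-1}\alpha^*(q\|\kappa\|_c)$.

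The genuinely new point, and the step where I would be most careful, is replacing the $\mathbb L^2$-control of the $\nu$-factor (used in Theorem~\ref{propkappalip}) by one in terms of the relative entropy $H(\nu|\mu)$. If $H(\nu|\mu)=+\infty$ the asserted bound is trivially true, so assume $\nu\ll\mu$ with finite entropy. Since the two processes have the \emph{same} generator, the path laws $\mathbb P_\nu$, $\mathbb P_\mu$ on $[0,t]$ differ only through their time-$0$ marginals, so disintegration along the initial coordinate gives $\frac{d\mathbb P_\nu}{d\mathbb P_\mu}(\omega)=\frac{d\nu}{d\mu}(\omega_0)$ and hence $H(\mathbb P_\nu\,|\,\mathbb P_\mu)=H(\nu|\mu)$. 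Feeding the test functions $\lambda\mathbf 1_A$ into the Donsker--Varadhan variational formula and optimising in $\lambda>0$ (the choice $\lambda=\log(1/\mathbb P_\mu(A))$ gives the constant $\log 2$) then yields the elementary inequality
$$\mathbb P_\nu(A)\;\le\;\frac{\log 2+H(\nu|\mu)}{\log(1/\mathbb P_\mu(A))}$$
for every event $A$ with $0<\mathbb P_\mu(A)<1$.

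I then split, with $A:=\{\int_0^t\kappa(X_s)\,ds\le\varepsilon\mu(\kappa)t\}$,
$$\mathbb E_\nu\!\left(e^{-r\int_0^t\kappa(X_s)ds}\right)\;\le\;e^{-r\kappa_0 t}\,\mathbb P_\nu(A)\;+\;e^{-r\varepsilon\mu(\kappa)t},$$
using $\int_0^t\kappa(X_s)\,ds\ge\kappa_0 t$ everywhere (so in particular on $A$) and $\int_0^t\kappa(X_s)\,ds>\varepsilon\mu(\kappa)t$ on $A^c$, exactly the splitting from the proof of Corollary~\ref{propcontroleexp}. Rewriting $A$ in the form required by Theorem~\ref{thm1cgwy}(2) with $\beta=\mu$, the same $u=v=-\kappa/\|\kappa\|_c$ (Remark~\ref{remlipc} allowing the unbounded $\kappa$) and deviation level $R=(1-\varepsilon)\mu(\kappa)/\|\kappa\|_c$ gives $\mathbb P_\mu(A)\le e^{-t\alpha((1-\varepsilon)\mu(\kappa)/\|\kappa\|_c)}$, hence $\log(1/\mathbb P_\mu(A))\ge t\,\alpha((1-\varepsilon)\mu(\kappa)/\|\kappa\|_c)$, and therefore $\mathbb P_\nu(A)\le (\log 2+H(\nu|\mu))/(t\,\alpha((1-\varepsilon)\mu(\kappa)/\|\kappa\|_c))$.

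It remains to assemble: take $r$-th roots using $(a+b)^{1/r}\le a^{1/r}+b^{1/r}$, then multiply by the $\mu$-factor estimate and by $W_p(\nu,\mu)$. The contribution $e^{-\kappa_0 t}$ coming out of the ``bad'' event merges with $e^{-\theta_\mu t}$ to produce the exponent $\mu(\kappa)+\kappa_0-(q\|\kappa\|_c)^{-1}\alpha^*(q\|\kappa\|_c)$ of $H_1$, the polynomial prefactor $\big((\log 2+H(\nu|\mu))/(t\,\alpha(\cdot))\big)^{1/r}$ being precisely what the entropic bound on $\mathbb P_\nu(A)$ delivers, while the ``good'' event contribution $e^{-r\varepsilon\mu(\kappa)t}$, after the $r$-th root and multiplication by $e^{-\theta_\mu t}$, produces $H_2$. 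The two hypotheses enter exactly here: $(q\|\kappa\|_c)^{-1}\alpha^*(q\|\kappa\|_c)\le\mu(\kappa)+\kappa_0$ makes the exponential in $H_1$ non-increasing (the polynomial factor settling the equality case), and $(1-\varepsilon)\mu(\kappa)>(q\|\kappa\|_c)^{-1}\alpha^*(q\|\kappa\|_c)$ does the same for $H_2$ while also ensuring that the deviation level defining $A$ is admissible and that $\alpha$ is evaluated at a positive argument. I expect the only real obstacle to be the careful justification of the path-space identity $H(\mathbb P_\nu|\mathbb P_\mu)=H(\nu|\mu)$ and of the measurability needed to apply Donsker--Varadhan to the cylinder event $A$ built from an additive functional; everything else is a bookkeeping variant of the proofs of Theorem~\ref{propkappalip} and Corollary~\ref{propcontroleexp}.
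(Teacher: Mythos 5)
Your proof is correct and follows essentially the same route as the paper's: the three-factor bound of Proposition~\ref{propBEreinf}, the entropy inequality applied on path space with $H(\mathbb P_\nu|\mathbb P_\mu)=H(\nu|\mu)$, the splitting on the event $\{\int_0^t\kappa(X_s)\,ds\le\varepsilon\mu(\kappa)t\}$ combined with the $WI$ deviation bound under $\mathbb P_\mu$, and Theorem~\ref{thm1cgwy}(3) for the $\mu$-factor. The only discrepancy is that your (correct) computation yields $\alpha((1-\varepsilon)\mu(\kappa)/\|\kappa\|_c)$ in the denominator of $H_1$, where the stated theorem writes $\alpha((1-\varepsilon)/\|\kappa\|_c)$ --- a typo in the statement, not a gap in your argument.
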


Since $H(\nu|\mu) \leq \log(1 + ||\rho||_{L^2(\mu)})$, this entropic pre-factor is often much smaller than the $L^2$ pre-factor of Theorem \ref{propkappalip}. 

\begin{proof}
We shall use the classical entropy inequality
\begin{equation} \label{ineq_ent}
P(A) \leq \frac{\log 2 + H(P|Q)}{\log(1 + 1/Q(A))}
\end{equation}
for any two probability measures $P$ and $Q$, and event $A$. It can be derived as a consequence of the dual formulation
$$H(P|Q) = \sup_f P(f) - \log Q(e^f)$$
by taking $f = \lambda \mathbf 1_A$ and then optimizing in $\lambda$. 

We wish to apply \eqref{ineq_ent} with $P$ the distribution of a trajectory of the diffusion process with initial data distributed according to $\nu$, and $Q$ the law of a trajectory of the same process, but started from the equilibrium distribution $\mu$. It is known, for example as a consequence of the Girsanov formula \cite{Leo12}, that in this case
$$H(P|Q) = H(\nu|\mu).$$
If we take $A$ the event ${\int_0^t{\kappa(X_s)ds} \geq \mathbb{E}_{\mu}(\kappa) + r}$, we get $$\mathbb P(A) \leq (\log 2 + H(\nu|\mu))/(t\alpha(r)).$$
The proof of the first part then proceeds as in the proof of Theorem \ref{propkappalip}. Combining the transport-information inequality and the above bound, the counterpart to \eqref{eqw1exp} for $r, R$ and $A$ as in the proof of Theorem \ref{propkappalip}, is
\begin{eqnarray}\label{eqw1exp_ent}
\mathbb E_\nu\left(e^{- \, \int_0^t \, r \, \kappa(X_s)  \, ds}\right)&\leq& e^{- \, r \, \kappa_0 \, t} \, \mathbb P_\nu(A) \,  + \, e^{-r \, \mu(\kappa) \, t} \, \mathbb E_\nu\left(e^{- \, \int_0^t \, r \, (\kappa -\mu(\kappa))(X_s)  \, ds} \, \mathbf 1_{A^c}\right) \nonumber \\ &\leq& \frac{\log 2 + H(\nu|\mu)}{t\alpha(R/(r||\kappa||_c))} \; e^{ \, - \, r \, \kappa_0 t} \, + \, e^{- \, (r \, \mu(\kappa) \, - \, R) \, t} \, .
\end{eqnarray}
We then take as before, $R = (1-\varepsilon) r\mu(\kappa)$ and get
$$
\mathbb E_\nu\left(e^{- \, \int_0^t \, r \, \kappa(X_s)  \, ds}\right)^{1/r} \leq \left(\frac{\log 2 + H(\nu|\mu)}{t\alpha((1-\varepsilon)/||\kappa||_c)}\right)^{1/r}e^{-\kappa_0 t} +e^{- \, \varepsilon \, \mu(\kappa) \, t} \, .$$
Moreover, 
$$\mathbb{E}_{\mu}\left[\exp\left(\int_0^t{-q\kappa(X_s)ds}\right)\right]^{1/q} \leq  \exp(-t\mu(\kappa) + t(q||\kappa||_c)^{-1}\alpha^*(q||\kappa||_c)).$$

Using Proposition \ref{propBEreinf} then concludes the proof. 
\end{proof}

We could also straightforwardly derive results similar to Corollary \ref{corkappalip} and Theorem \ref{proprholip}, but for the sake of brevity we do not do so here.
\medskip

\subsection{Bounds on the Poincar\'e constant. \\ \\}

Recall that according to the discussion in the introduction, thanks to \cite[Lemma 2.12]{CGZ}, $C_P(\mu) \leq \frac 2c$ as soon as $$\mathbb E_\mu\left(e^{-2\int_0^t \, \rho(X_s) ds}\right) \leq C \, e^{-ct} \, .$$

For $r>2$ we may use
\begin{eqnarray*}
\mathbb E_\mu\left(e^{- \, \int_0^t  \, 2 \, \rho(X_s)  \, ds}\right) \, &\leq& \mathbb E_\mu^{\frac 2r}\left(e^{- \, \int_0^t  \, r \, \rho(X_s)  \, ds}\right) \, ,
\end{eqnarray*}
together with Theorem \ref{thm1cgwy} or Corollary \ref{propcontroleexp}. This yields
\begin{theorem}\label{propPoinclip}
Assume that the assumptions of Theorem \ref{proprholip} are fulfilled. 
\begin{enumerate}
\item[(1)] \quad if $\alpha^*$ satisfies 
\begin{equation}\label{eqgoodalphastarter}
\mu(\rho) > \alpha^*(2\parallel \rho \parallel_c)/2 \, ,
\end{equation}
then $$C_P(\mu) \, \leq \, \frac{1}{\mu(\rho)-\alpha^*(2 \parallel \rho\parallel_c)/2} \, .$$
\item[(2)] \quad If $\alpha$ satisfies 
\begin{equation}\label{eqgoodalpha1ter}
2\rho_0+\alpha(\mu(\rho)/\parallel \rho \parallel_c) > \, 0 \, ,
\end{equation}
then for all $r>2$, $1>\varepsilon>0$ such that $r \, \rho_0+\alpha((1-\varepsilon)\mu(\rho)/\parallel \rho \parallel_c) >0$, $$C_P(\mu) \, \leq \, \frac{1}{\min (\varepsilon \, \mu(\rho) \, , \, \rho_0 + (\alpha((1-\varepsilon)\mu(\rho)/\parallel \rho\parallel_c)/r))} \, .$$
If $c$ is continuous, the optimal $\varepsilon$ satisfies $$\alpha((1-\varepsilon)\mu(\rho)/\parallel \rho\parallel_c)=2(\varepsilon \mu(\rho) - \rho_0)$$ provided the right hand side is non-negative and in this case $C_P(\mu) \leq 1/(\varepsilon \, \mu(\rho))$. 
\end{enumerate}
\end{theorem}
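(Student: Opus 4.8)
The plan is to follow exactly the reduction recalled just before the statement: by \cite[Lemma 2.12]{CGZ} applied to the dense class of bounded Lipschitz functions, together with homogeneity, the variance identity $\Var_\mu(P_tf)=2\int_t^{+\infty}\mu(|\nabla P_sf|^2)\,ds$, and the gradient bound of Theorem~\ref{proprholip} (which yields \eqref{eqvardecay}), it suffices to exhibit constants $C\ge 1$ and $c>0$ with $\mathbb E_\mu(e^{-2\int_0^t\rho(X_s)\,ds})\le C\,e^{-ct}$ for all $t$; then $C_P(\mu)\le 2/c$. So the whole proof reduces to estimating this Laplace transform, via the elementary inequality $\mathbb E_\mu(e^{-2\int_0^t\rho(X_s)\,ds})\le \mathbb E_\mu^{2/r}(e^{-\int_0^t r\rho(X_s)\,ds})$, valid for every $r\ge 2$ (Jensen applied to $x\mapsto x^{r/2}$), and then feeding the right-hand side into Theorem~\ref{thm1cgwy} or Corollary~\ref{propcontroleexp}.

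For part (1), I would apply statement (3) of Theorem~\ref{thm1cgwy} with the admissible pair $(u,v)=(-\rho/\|\rho\|_c,\,-\rho/\|\rho\|_c)$ — admissible since $\|\rho\|_c<\infty$ forces $\|u\|_c\le 1$, and Remark~\ref{remlipc} lets us drop the boundedness requirement — parameter $\lambda=r\|\rho\|_c>0$, and initial measure $\beta=\mu$, so that $\|d\beta/d\mu\|_{\L^2(\mu)}=1$ and $\lambda\mu(v)=-r\mu(\rho)$. This gives $\mathbb E_\mu(e^{-\int_0^t r\rho(X_s)\,ds})\le e^{t(-r\mu(\rho)+\alpha^*(r\|\rho\|_c))}$, hence, after the $(2/r)$-th power, $\mathbb E_\mu(e^{-2\int_0^t\rho(X_s)\,ds})\le e^{-2t(\mu(\rho)-\alpha^*(r\|\rho\|_c)/r)}$; so one may take $C=1$ and $c=2(\mu(\rho)-\alpha^*(r\|\rho\|_c)/r)$. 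Since $\lambda\mapsto\alpha^*(\lambda)/\lambda$ is non-decreasing (convexity of $\alpha^*$ with $\alpha^*(0)=0$), $c$ is largest at $r=2$, where it is positive precisely under \eqref{eqgoodalphastarter}; hence $C_P(\mu)\le 2/c=1/(\mu(\rho)-\alpha^*(2\|\rho\|_c)/2)$.

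For part (2), I would instead use Corollary~\ref{propcontroleexp} with $u=\rho$ (so $u_0=\rho_0$, $\mu(u)=\mu(\rho)>0$), $\lambda=r$, $\beta=\mu$, and a parameter $\varepsilon\in(0,1)$: it yields $\mathbb E_\mu(e^{-\int_0^t r\rho(X_s)\,ds})\le 2\max\big(e^{-r\varepsilon\mu(\rho)t},\,e^{-t(r\rho_0+\alpha((1-\varepsilon)\mu(\rho)/\|\rho\|_c))}\big)$, and taking the $(2/r)$-th power gives $\mathbb E_\mu(e^{-2\int_0^t\rho(X_s)\,ds})\le 2^{2/r}\max\big(e^{-2\varepsilon\mu(\rho)t},\,e^{-2t(\rho_0+\alpha((1-\varepsilon)\mu(\rho)/\|\rho\|_c)/r)}\big)$. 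Thus $C=2^{2/r}$ and $c=2\min(\varepsilon\mu(\rho),\,\rho_0+\alpha((1-\varepsilon)\mu(\rho)/\|\rho\|_c)/r)$, which is positive exactly when $r\rho_0+\alpha((1-\varepsilon)\mu(\rho)/\|\rho\|_c)>0$ (the other term being automatically positive), and \eqref{eqgoodalpha1ter} guarantees that admissible $(r,\varepsilon)$ exist (let $\varepsilon\downarrow 0$, $r\downarrow 2$). This gives $C_P(\mu)\le 2/c$ as stated. For the optimal $\varepsilon$: when $\alpha((1-\varepsilon)\mu(\rho)/\|\rho\|_c)>0$ the second argument of the minimum decreases in $r$, so one lets $r\downarrow 2$ and then maximises $\min(\varepsilon\mu(\rho),\,\rho_0+\alpha((1-\varepsilon)\mu(\rho)/\|\rho\|_c)/2)$; since the first argument increases and the second decreases continuously in $\varepsilon$ (continuity of $\alpha$, with $c$ continuous so $\|\rho\|_c$ is finite for an actual minorant $\rho$), the maximum is reached at the crossing point $\alpha((1-\varepsilon)\mu(\rho)/\|\rho\|_c)=2(\varepsilon\mu(\rho)-\rho_0)$, where the rate is $2\varepsilon\mu(\rho)$, i.e. $C_P(\mu)\le 1/(\varepsilon\mu(\rho))$, provided $\varepsilon\mu(\rho)-\rho_0\ge 0$ so this equation has a solution (which holds automatically at such a crossing).

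The argument is essentially a bookkeeping assembly of results already established; the core chain is "gradient bound $\Rightarrow$ $WI$ Laplace-transform estimate $\Rightarrow$ variance decay $\Rightarrow$ Poincar\'e". The only genuinely delicate points are: verifying strict positivity of the denominators under the stated hypotheses; the density/homogeneity/differentiation details hidden in the implication $\mathbb E_\mu(e^{-2\int\rho})\le Ce^{-ct}\Rightarrow C_P\le 2/c$ (covered by conservativeness and the Lyapunov moment bound recalled after Definition~\ref{defcurve}); and, the most fiddly step, the one-parameter optimization over $(r,\varepsilon)$ in part (2) and the continuity needed to locate the optimal $\varepsilon$.
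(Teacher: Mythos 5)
Your proof is correct and follows essentially the same route as the paper: reduce to an exponential decay of $\mathbb E_\mu\bigl(e^{-2\int_0^t\rho(X_s)\,ds}\bigr)$ via \cite[Lemma 2.12]{CGZ} and \eqref{eqvardecay}, apply the Hölder/Jensen step $\mathbb E_\mu\bigl(e^{-2\int_0^t\rho}\bigr)\le \mathbb E_\mu^{2/r}\bigl(e^{-r\int_0^t\rho}\bigr)$, and then invoke Theorem \ref{thm1cgwy}(3) with $u=-\rho/\parallel\rho\parallel_c$, $\lambda=r\parallel\rho\parallel_c$ for part (1) and Corollary \ref{propcontroleexp} for part (2), concluding the optimal-$\varepsilon$ claim by the same monotone-crossing argument the paper uses.
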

For the last assertion, first remark that for a given $\varepsilon$ the optimal $r$ is such that both terms in the $min$ are equal, i.e. $$r(\varepsilon) = \frac{\alpha((1-\varepsilon)\mu(\rho)/\parallel \rho\parallel_c)}{\varepsilon \mu(\rho) - \rho_0}$$ provided $r(\varepsilon) \geq 2$. Then remark that $\varepsilon \mapsto h(\varepsilon)= \alpha((1-\varepsilon) \mu(\rho)/||\rho||_{c})$ is non increasing, $\varepsilon \mapsto g(\varepsilon) = 2(\varepsilon \mu(\rho)-\rho_0)$ is non decreasing, they are both continuous and $h(0)>g(0)$ while $h(1)<g(1)$ yielding the optimal $\varepsilon$. 
\smallskip

\begin{remark}\label{remnosmooth}
If $V$ is not smooth enough, the previous result is still true in the following situation: there exists a family $V_\eta$ of smooth potentials (for instance $V*\gamma_\eta$ where $\gamma_\eta$ is a centered gaussian kernel with covariance matrix $\eta Id$) and a bounded $\rho$ such that $\rho_\eta \geq \rho$ for all $\eta$ and the measure $e^{-V_\eta} dx$ converges weakly to $\mu$ as $\eta \to 0$. \hfill $\diamondsuit$
\end{remark}
\smallskip

\begin{remark}\label{remcost}
We will now discuss some possible choices for the cost $c$. Notice the transport-information $WI(\alpha,c)$ is only important for the single function $\rho$ (or $\kappa$). A natural choice will thus be
\begin{equation}\label{eqchoicedist}
c(x,y) \, = \, |\rho(x) \, - \, \rho(y)| \, ,
\end{equation}
so that $||\rho||_{c}=1$. The difficulty will be to find tractable conditions for $WI(\alpha,c)$, in particular because $c$ is not necessarily a distance.  
\smallskip

The most classical choice is $c(x,y)=|x-y|$ so that $\parallel u\parallel_c = \parallel u\parallel_{Lip} =\parallel |\nabla u|\parallel_\infty$. $WI(\alpha,c)$ then becomes the following $W_1I(\alpha)$ inequality  
\begin{equation}\label{eqW1Ialpha}
\alpha(W_1(\nu,\mu))  \, \leq \, I(\nu|\mu) \; , \quad \forall \nu \, .
\end{equation}

Of course, in many situations one may replace the \emph{natural} $\rho$ or $\kappa$ by some smaller function. For instance, if we consider $V(x)=|x|^4$ as in remark \ref{remBEreinf}, we may replace the natural $\kappa(x)=2|x|^2$ by $\kappa(x)=2\min(|x|^2,|x|)$ which is (globally) Lipschitz, and thus apply the previous results provided $W_1I(\alpha)$ is satisfied. 
\smallskip

If we choose the Hamming distance $c(x,y)=\mathbf 1_{x \neq y}$, $T_c(\nu,\mu)= \frac 12 \, \parallel \nu - \mu\parallel_{TV}$ and $\parallel u \parallel_c \, = \, \Osc (u) = \sup_{x,y} \, |u(x)-u(y)|$. 
\hfill $\diamondsuit$
\end{remark}
\medskip

\begin{remark}{\bf{About homogeneity.}}

It is well known that the Poincar\'e inequality is $2$-homogeneous w.r.t. dilations. This means the following: for $\lambda >0$, define $\mu_\lambda(f)=\int f(\lambda x) \mu(dx)$ so that 
\begin{eqnarray*}
\mu_\lambda(dx) &=& \frac 1\lambda \, e^{-V(x/\lambda)} \, dx \, := \, e^{-V_\lambda(x)} \, dx \, ,\\
V_\lambda(x) &=& \ln(\lambda) + V(x/\lambda) \, , \quad \nabla V_\lambda(x)=\frac 1\lambda \, \nabla V(x/\lambda) \, , \quad \rho_\lambda(x) = \frac{1}{\lambda^2} \, \rho(x/\lambda) \, , \\ \mu_\lambda(\rho_\lambda) &=& \int \rho_\lambda(\lambda x) \mu(dx) \, = \, \frac{1}{\lambda^2} \, \mu(\rho) \, ,  \quad \rho_{\lambda,0}=\frac{1}{\lambda^2} \, \rho_0 \, \\ \parallel \rho_\lambda\parallel_{Lip}&=& \frac{1}{\lambda^3} \, \parallel \rho \parallel_{Lip} \, , \quad \Osc(\rho_\lambda) = \frac{1}{\lambda^2} \, \Osc(\rho) \, , 
\end{eqnarray*}
and finally $$C_P(\mu_\lambda) = \lambda^2 \, C_P(\mu) \, .$$ 
\medskip

Of course the key point here is that the dilation only concerns the invariant measure and not the dynamics. In particular there is no immediate correspondence between the natural dynamics corresponding to $\mu$ and its dilation, so that we cannot expect to get a direct comparison of the rate of convergence in $W_1$ distance for both. 
\smallskip

Now consider $T_c(\mu_\lambda,\nu)$. If we assume that $c$ is homogeneous of degree $\eta$, i.e. $$c(\lambda x,\lambda y)=\lambda^\eta \, c(x,y) \, $$ we have $$T_c(\mu_\lambda,\nu) = \lambda^\eta \, T_c(\mu,\nu_{1/\lambda}) \, .$$ It is easy to check that $$I(\nu_{1/\lambda}|\mu) = \lambda^2 \, I(\nu|\mu_\lambda) \, ,$$ so that if $\mu$ satisfies a $WI(\alpha,c)$ inequality, 
\begin{eqnarray*}
T_c(\nu,\mu_\lambda) &=& \lambda^\eta \; T_c(\nu_{1/\lambda},\mu) \leq  \lambda^\eta \, \alpha^{-1}(I(\nu_{1/\lambda}|\mu)) \\ &\leq& \lambda^\eta \, \alpha^{-1}(\lambda^2 \, I(\nu|\mu_\lambda)) \, ,
\end{eqnarray*}
and $\mu_\lambda$ satisfies a $WI(\alpha_\lambda,c)$ inequality with $$\alpha_\lambda^{-1}(s)=\lambda^{\eta} \, \alpha^{-1}(\lambda^2 \, s) \, .$$ For simplicity we will only consider the case $\alpha(s)= c s^m$ for some $m>0$, i.e. $$\alpha_\lambda(s)=c \, \lambda^{-(m\eta+2)} \, s^m \, .$$ 
If we consider both cases, $c(x,y)=|x-y|$, yielding $\eta=1$, or $c(x,y)=\mathbf 1_{x\neq y}$ yielding $\eta=0$, it is easily seen that our Theorem \ref{propPoinclip} is homogeneous w.r.t. to dilations. \hfill $\diamondsuit$
\end{remark}
\medskip

\begin{remark}\label{remgamma2integre}
The criterion $\mu(\rho) > 0$ can be called positive \emph{average curvature}. Recall that in the symmetric situation we are looking at, a Poincar\'e inequality is equivalent to an integral $\Gamma_2$ criterion (see e.g \cite{Ane} proposition 5.5.4), namely 
\begin{proposition}\label{propintegregamma2}
The following statements are equivalent
\begin{itemize}
\item For all smooth $f$,  $$\mu(\parallel \Hess f\parallel^2_2) + \mu(\langle \nabla f \, , \, \Hess V \, \nabla f\rangle) \, \geq \, C \, \mu(|\nabla f|^2) \, ,$$
\item The Poincar\'e constant satisfies $$C_P(\mu) \, \leq \, 1/C \, ,$$
\end{itemize}
where $\parallel \Hess f(x)\parallel_2$ denotes the Hilbert-Schmidt norm of $\Hess f(x)$.\end{proposition}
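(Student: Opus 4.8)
The plan is to route the equivalence through the $L^2(\mu)$ spectral theory of the generator $L=\Delta-\nabla V\cdot\nabla$. The paper has already established that, under the standing hypotheses, the process is conservative and ergodic and $-L$ is a nonnegative self-adjoint operator on $L^2(\mu)$ for which $0$ is a simple eigenvalue; I will use these facts freely. First I would observe that both quantities in the first item are unchanged if $f$ is replaced by $f-\mu(f)$, so one may assume $\mu(f)=0$. The key algebraic input is the \emph{integrated Bochner formula}: integrating the pointwise identity $\Gamma_2(f)=\|\Hess f\|_2^2+\langle\nabla f,\Hess V\,\nabla f\rangle$ against $\mu$ and using $\mu(Lg)=0$ together with $\mu(\nabla f\cdot\nabla(Lf))=-\mu((Lf)^2)$ (itself a consequence of $\mu(\nabla g\cdot\nabla h)=-\mu(gLh)$ and self-adjointness), one obtains
$$\mu\big(\|\Hess f\|_2^2\big)+\mu\big(\langle\nabla f,\Hess V\,\nabla f\rangle\big)=\mu\big((Lf)^2\big).$$
Hence the first item is exactly the assertion that $\mu((Lf)^2)\geq C\,\mu(|\nabla f|^2)$ for all smooth mean-zero $f$, recalling $\mu(|\nabla f|^2)=-\mu(fLf)$.

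Next I would run the spectral argument. Writing $-L=\int_{[0,\infty)}\lambda\,dE_\lambda$, simplicity of $0$ means that for mean-zero $f$ the scalar measure $d\langle E_\lambda f,f\rangle$ is carried by $[\lambda_1,\infty)$, where $\lambda_1:=\inf\big(\sigma(-L)\setminus\{0\}\big)$ is the spectral gap. Then $\mu(|\nabla f|^2)=\int\lambda\,d\langle E_\lambda f,f\rangle$ and $\mu((Lf)^2)=\int\lambda^2\,d\langle E_\lambda f,f\rangle$, so the inequality $\mu((Lf)^2)\geq C\mu(|\nabla f|^2)$ holds for every mean-zero $f$ if and only if $\lambda^2\geq C\lambda$ on $\sigma(-L)\cap(0,\infty)$, i.e. if and only if $\lambda_1\geq C$. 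Since the variational characterization of the spectral gap is precisely $C_P(\mu)=1/\lambda_1$, this is the same as $C_P(\mu)\leq 1/C$, which gives the desired equivalence.

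The one point requiring care is the passage between ``for all smooth $f$'' and ``for all $f$ in the appropriate domain'': in the forward direction one uses that smooth functions form a core for the Dirichlet form, and in the backward direction that smooth functions (with the mild growth allowed under the hypotheses, e.g. when the curvature is bounded below) belong to the domain of $L$, so that the integrations by parts above carry no boundary term at infinity. Under the standing assumptions this is a routine approximation argument, and I regard it as the only genuine obstacle. If one prefers to avoid the spectral theorem for the implication ``first item $\Rightarrow$ Poincar\'e'', it can be done by a direct semigroup computation: with $\phi(t):=\mu(|\nabla P_tf|^2)$ one has, by the same integrated Bochner identity applied to $P_tf$, $\phi'(t)=-2\mu(\Gamma_2(P_tf))\leq -2C\,\phi(t)$, hence $\phi(t)\leq e^{-2Ct}\phi(0)$, and since $\Var_\mu(f)=2\int_0^{\infty}\phi(s)\,ds$ (using $P_tf\to\mu(f)$ in $L^2(\mu)$) this yields $\Var_\mu(f)\leq \phi(0)/C=\mu(|\nabla f|^2)/C$.
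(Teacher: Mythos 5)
Your proof is correct. The paper does not prove this proposition itself --- it only cites \cite{Ane}, Proposition 5.5.4 --- and your argument (the integrated Bochner identity $\mu(\|\Hess f\|_2^2)+\mu(\langle\nabla f,\Hess V\,\nabla f\rangle)=\mu((Lf)^2)$ followed by the spectral characterization of the gap, with the semigroup decay of $\mu(|\nabla P_tf|^2)$ as an alternative for the implication toward Poincar\'e) is exactly the standard proof behind that citation; the one point that genuinely uses the standing hypotheses is the simplicity of the eigenvalue $0$ (otherwise a nonconstant harmonic $f$ would satisfy the first item vacuously while Poincar\'e fails), which you correctly invoke and which the paper's ergodicity discussion supplies.
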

In particular if  \eqref{eqBEvar} is satisfied, $C_P(\mu) \leq 1/C$ as soon as $\mu(\rho |\nabla f|^2) \, \geq \, C \, \mu(|\nabla f|^2)$ for all smooth $f$.  If we may choose a positive constant for $\rho$ we recover the Bakry-Emery criterion, but if $\rho(x) < - \varepsilon$ on a small ball, one can build some $f$ such that the previous inequality is not satisfied. Hence our Theorem \ref{propPoinclip} is of a different nature.
\hfill $\diamondsuit$
\end{remark}

\medskip

\section{Examples and Applications.}\label{secapplis}

\subsection{About $W_1I$ inequalities.\\ \\}\label{subsecenough}

Since the results of the previous section are based on $W_1I$ inequalities, it is important to have some sufficient conditions for these inequalities to be satisfied. Actually not so much is known and we recall below the main examples:
\begin{proposition}\label{propcompare}
We have the following properties
\begin{enumerate}
\item[(1)] \; (see \cite[Theorem 3.1]{GLWY}) \quad Assume that $\mu$ satisfies a Poincar\'e inequality  with optimal constant $C_P(\mu)$, then $\mu$ satisfies a $W_1I$ inequality $$\parallel \nu - \mu\parallel_{TV}^2 \, \leq \, 4 \, C_P(\mu) \, I(\nu|\mu) \quad \forall \, \nu \, ;$$ i.e. a $WI(\alpha,c)$ inequality with $\alpha(s)= (1/C_P(\mu)) \, s^2$ and $c(x,y)= \mathbf 1_{x \neq y}$.
\item[(2)] \; (see \cite{OV}) \quad Assume that  $\mu$ satisfies a logarithmic Sobolev inequality  with optimal constant $C_{LS}(\mu)$. Then $\mu$ satisfies a $W_2I$ inequality $$W_2^2(\nu \, , \, \mu) \, \leq \, C_{LS}^2(\mu) \, I(\nu|\mu) \quad \forall \, \nu \, .$$ Consequently, since $W_1 \leq W_2$,  it satisfies a $WI(\alpha,c)$ inequality with $\alpha(s)= (1/C_{LS}^2(\mu)) \, s^2$ and $c(x,y)=|x-y|$.
\end{enumerate}
Of course similar inequalities are satisfied for any $C\geq C_P(\mu)$ (resp. $C_{LS}(\mu)$). \end{proposition}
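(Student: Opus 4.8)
The two statements are independent; I would prove each by unwinding the relevant dual or coupling description of the transportation cost $T_c$, so that only soft estimates enter.

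\emph{Part (1).} The right-hand side $I(\nu|\mu)$ is infinite unless $\nu=f\mu$ with $g:=\sqrt f$ locally Lipschitz, so I may assume this; note that $\mu(g^2)=\nu(\R^n)=1$. Recall that for the Hamming cost $T_c(\nu,\mu)=\frac12\,\vt{\nu-\mu}=\frac12\,\mu(|g^2-1|)$. The plan is then a single Cauchy--Schwarz estimate: writing $|g^2-1|=|g-1|\,(g+1)$,
$$ \mu(|g^2-1|)^2 \;\le\; \mu\big((g-1)^2\big)\,\mu\big((g+1)^2\big)\;=\;(2-2\mu(g))\,(2+2\mu(g))\;=\;4\big(1-\mu(g)^2\big)\;=\;4\,\Var_\mu(g), $$
where $\mu(g^2)=1$ was used. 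Applying the Poincar\'e inequality \eqref{eqpoinc} to $g$ gives $\Var_\mu(g)\le C_P(\mu)\,\mu(|\nabla g|^2)=C_P(\mu)\,I(\nu|\mu)$, hence $\vt{\nu-\mu}^2\le 4\,C_P(\mu)\,I(\nu|\mu)$; rewritten in terms of $T_c$ this is exactly $WI(\alpha,c)$ with $\alpha(s)=s^2/C_P(\mu)$. The one point requiring care is the order of operations in the Cauchy--Schwarz step: it must be performed on the two factors $g-1$ and $g+1$ before squaring, since it is precisely the identity $\mu((g-1)^2)\,\mu((g+1)^2)=4\,\Var_\mu(g)$ that produces the sharp constant $4$ rather than a worse one.

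\emph{Part (2).} Here the plan is to chain two known inequalities. First, the Otto--Villani theorem \cite{OV} states that a logarithmic Sobolev inequality with constant $C_{LS}(\mu)$ implies the quadratic transport--entropy (Talagrand) inequality $W_2^2(\nu,\mu)\le C_{LS}(\mu)\,H(\nu|\mu)$ for every $\nu$. Second, applying \eqref{eqLS} to $f=\sqrt{d\nu/d\mu}$ (and using $\mu(d\nu/d\mu)=1$ together with $I(\nu|\mu)=\mu(|\nabla\sqrt{d\nu/d\mu}|^2)$) gives $H(\nu|\mu)\le C_{LS}(\mu)\,I(\nu|\mu)$. Composing the two bounds,
$$ W_2^2(\nu,\mu)\;\le\;C_{LS}(\mu)\,H(\nu|\mu)\;\le\;C_{LS}(\mu)^2\,I(\nu|\mu), $$
which is the asserted $W_2I$ inequality; since $W_1\le W_2$ and $T_c=W_1$ when $c(x,y)=|x-y|$, this is a $WI(\alpha,c)$ inequality with $\alpha(s)=s^2/C_{LS}(\mu)^2$. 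The final sentence of the statement is then immediate, since enlarging $C$ beyond $C_P(\mu)$ (resp. $C_{LS}(\mu)$) only increases the right-hand sides.

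If one wished to avoid quoting \cite{OV} in Part (2), the natural self-contained route runs the diffusion semigroup flow $\nu_t:=P_t^*\nu$: de Bruijn's identity gives $\frac{d}{dt}H(\nu_t|\mu)=-4\,I(\nu_t|\mu)$, the heat flow is the $W_2$-gradient flow of relative entropy so that $W_2(\nu_0,\mu)\le\int_0^\infty 2\sqrt{I(\nu_t|\mu)}\,dt$, and \eqref{eqLS} forces $\frac{d}{dt}\sqrt{H(\nu_t|\mu)}\le -2\sqrt{I(\nu_t|\mu)/C_{LS}(\mu)}$; integrating over $[0,\infty)$ recovers Talagrand's inequality, after which one chains with \eqref{eqLS} as above. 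The only genuinely technical point there is justifying that $t\mapsto\nu_t$ is an absolutely continuous curve in Wasserstein space with metric speed $2\sqrt{I(\nu_t|\mu)}$, which is why I would prefer to invoke \cite{OV}. In summary, the proposition is soft: the real content of (1) is the bookkeeping of constants, and that of (2) is the Otto--Villani implication.
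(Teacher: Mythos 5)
Your proof is correct. The paper offers no argument for this proposition at all --- it is stated as a recall of known results, with part (1) cited to \cite[Theorem 3.1]{GLWY} and part (2) to \cite{OV} --- so there is nothing to compare against beyond the references themselves; your two arguments (the Cauchy--Schwarz factorization $|g^2-1|=|g-1|(g+1)$ giving $\mu(|g^2-1|)^2\le 4\Var_\mu(g)$, and the chaining of Otto--Villani with $H(\nu|\mu)\le C_{LS}(\mu)\,I(\nu|\mu)$) are precisely the standard proofs underlying those citations, with the constants coming out right. The one place a reader could stumble, the normalization of the total variation norm ($T_c=\tfrac12\|\nu-\mu\|_{TV}$ for the Hamming cost, so that $\|\nu-\mu\|_{TV}^2\le 4C_P(\mu)I(\nu|\mu)$ is equivalent to $\alpha(T_c)\le I$ with $\alpha(s)=s^2/C_P(\mu)$), you handle consistently with the convention used in Remark \ref{remcost}.
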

It may seem strange to derive bounds for the Poincar\'e inequality starting from such a bound, but we may hope that starting from an a priori bad bound, the method developed in the previous section will help to improve upon the constant.
\smallskip

We will thus assume that $\mu$ satisfies some $W_1I(2)$ inequality in the terminology of \cite{GLWY}, i.e. we have 
\begin{equation}\label{eqalphetstar}
\alpha(s) = \frac{s^2}{C} \quad \textrm{ so that } \quad \alpha^*(s) = \frac{C}{4} \, s^2 \, .
\end{equation}
\medskip

This yields, using $\alpha^*$, the following values in our previous results:
\begin{enumerate}
\item[(1)] \quad in Theorem \ref{propkappalip}, for $r>2$,
\begin{equation}\label{ratelip1}
\theta \, = \, 2 \left(\mu(\kappa) - \frac C4 \, r \, \parallel \kappa\parallel_c^2\right) \, ,
\end{equation}
\item[(2)] \quad in Theorem \ref{proprholip}, 
\begin{equation}\label{ratelip2}
\theta \, = \, \mu(\rho) - \frac{C}{4} \, \parallel \rho\parallel_c^2 \, ,
\end{equation}
\item[(3)] \quad in Proposition \ref{propPoinclip}, 
\begin{equation}\label{ratelip3}
C_P(\mu) \, \leq \, \frac{1}{\mu(\rho) - \frac{C}{2} \, \parallel \rho\parallel_c^2} \, ,
\end{equation}
\end{enumerate}
provided $\theta >0$ or $\mu(\rho) - \frac{C}{2} \, \parallel \rho\parallel_c^2 > 0$.
\medskip

Similarly, using $\alpha$ instead of $\alpha^*$, we get:
\begin{enumerate}
\item[(1)] \quad in Theorem \ref{propkappalip}, $\theta \, = \, 2 \, \varepsilon \, \mu(\kappa)$ with  $r>2$ satisfying 
\begin{equation}\label{rateosc1}
 r = \frac{1}{C}  \, \frac{1}{\varepsilon \, \mu(\kappa) - \kappa_0} \, \frac{(1-\varepsilon)^2 \mu^2(\kappa)}{\parallel \kappa\parallel_c^2 }\, ,
\end{equation}
\item[(2)] \quad in Theorem \ref{proprholip}, $\theta \, =  \, \varepsilon \, \mu(\rho)$ with
\begin{equation}\label{rateosc2}
1 = \frac{1}{C}  \, \frac{1}{\varepsilon \, \mu(\rho) - \rho_0} \, \frac{(1-\varepsilon)^2 \mu^2(\rho)}{\parallel \rho\parallel_c^2 }\, ,
\end{equation}
\item[(3)] \quad in Proposition \ref{propPoinclip}, $C_P(\mu) \, \leq \, \frac{1}{\varepsilon \, \mu(\rho)}$ with
\begin{equation}\label{rateosc3}
2 = \frac{1}{C}  \, \frac{1}{\varepsilon \, \mu(\rho) - \rho_0} \, \frac{(1-\varepsilon)^2 \mu^2(\rho)}{\parallel \rho\parallel_c^2 } \, ,
\end{equation}
\end{enumerate}
provided $0<\varepsilon<1$.

In the latter cases, denoting by $g$ either $\kappa$ or $\rho$ and by $a$ either $2$ or $1$ we have 
\begin{equation}\label{leqtosolve}
aC(\varepsilon \mu(g)-g_0)\parallel g\parallel_c^2 = (1-\varepsilon)^2 \mu^2(g) \, ,
\end{equation}
 so that denoting by $\eta=\varepsilon \, \mu(g)$, $$h(\eta) \, = \, \eta^2 \, - \, \eta(2\mu(g)+aC\parallel g\parallel_c^2) \, + \, (\mu^2(g)+aCg_0 \parallel g\parallel_c^2) \, = \, 0 \, .$$ Since $h(0)=\mu^2(g)+aCg_0 \parallel g\parallel_c^2$ and $h(\mu(g))=aC\parallel g\parallel_c^2 (g_0-\mu(g)) <0$, we always have a solution $0<\varepsilon<1$, provided $h(0)>0$, solution given by
\begin{equation}\label{eqbestconst}
\varepsilon \, \mu(g) \, = \,  \left(\mu(g) + \frac a2 \, C \, \parallel g\parallel_c^2\right) \left(1 \, - \, \sqrt{1 \, - \, \frac{\mu^2(g)+aCg_0 \, \parallel g\parallel_c^2}{(\mu(g)+\frac a2 \, C \, \parallel g\parallel_c^2)^2}}\right) \, .
\end{equation}
\medskip

One can of course compare the bounds obtained above with $\alpha^*$ and $\alpha$. For simplicity denote by $\beta^*=\mu(g)-\frac{aC}{4} \, \parallel g\parallel_c^2$ the bound obtained with $\alpha^*$ and by $\beta$ the bound obtained with $\alpha$ in \eqref{eqbestconst}. We have:
\begin{enumerate}
\item[(1)] \; if $\beta^*\leq 0$ and $\mu^2(g)+aCg_0 \parallel g\parallel_c^2 \leq 0$, then neither of the bounds are available,
\item[(2)] \; if $\beta^*\leq 0$ and $\mu^2(g)+aCg_0 \parallel g\parallel_c^2 > 0$, then we may use $\beta$,
\item[(3)] \; if $\beta^*> 0$ and $\mu^2(g)+aCg_0 \parallel g\parallel_c^2 \leq 0$, then we may use $\beta^*$,
\item[(4)] \; if $\beta^*> 0$ and $\mu^2(g)+aCg_0 \parallel g\parallel_c^2 > 0$, then $\beta^*\leq \beta$ provided $\mu(g) \leq \frac{5a}{16} \, C \, \parallel g\parallel_c^2 \, + \, g_0$, and the converse inequality holds otherwise.
\end{enumerate} 
The proof of (4) is elementary. These results clearly indicate that both approaches may be of interest.
\medskip

\subsection{The positive curvature case. \\ \\}\label{subsecpos}

Let we illustrate first the previous results when $\rho_0>0$. 

In this case we already know that a lot of inequalities are satisfied, in particular 
\begin{equation}\label{eqsvr}
W_1(P^*_t\nu \, , \, \mu) \, \leq \, e^{- \, \rho_0 \, t} \, W_1(\nu,\mu)
\end{equation}
according to \cite{RS} and 
\begin{equation}\label{eqBEpoinc}
C_P(\mu) \, \leq \, \frac{1}{\rho_0} \quad \textrm{ and } \quad C_{LS}(\mu) \, \leq \, \frac{2}{\rho_0}
\end{equation}
according to the Bakry-Emery criterion. As we have previously mentioned, this statement is rigid, and under a non-trivial variable curvature bound the constants should get strictly better. We shall see what improvements can be obtained using the previous results.
\medskip

Since $\rho_0>0$, \eqref{leqtosolve} always admits a solution for some $\varepsilon \in (0,1)$. In order to compare with the previous bounds one can solve \eqref{leqtosolve} using $\eta = \varepsilon \, \mu(\rho) - \rho_0$, yielding another expression equivalent to \eqref{eqbestconst}
\begin{equation}\label{eqbestconst2}
\varepsilon \, \mu(\rho) \, = \, \rho_0 \, + \, \left((\mu(\rho)-\rho_0) + \frac a2 \, C \, \parallel \rho\parallel_c^2\right) \left(1 \, - \, \sqrt{1 \, - \, \frac{(\mu(\rho)- \rho_0)^2}{((\mu(\rho)-\rho_0)+\frac a2 \, C \, \parallel \rho\parallel_c^2)^2}}\right) \, .
\end{equation}
We have thus obtained the following results (using $\alpha$):

\begin{theorem}\label{BEimplipW1}
Assume that the assumptions of Theorem \ref{proprholip} are satisfied. Assume in addition that $\rho$ is bounded from below by some constant $\rho_0 > 0$. 

Then, for all $\nu$, all $t>\eta>0$, $$W_1(P_t^*\nu\, , \, \mu) \, \leq \, 2 \, \left(\sup_x \, r(2\eta,x,x))^{\frac 12} \right) \; e^{(\theta-\rho_0)\, \eta} \; e^{ -  \, \theta \, t} \; W_1(\nu,\mu) \, ,$$ where $\theta$ is given by one of the following expressions:
\begin{enumerate}
\item[(1)] \; either $$\theta = \, \rho_0 \, + \, \left((\mu(\rho)-\rho_0) + \frac{1}{2 \, \rho_0} \, \Osc^2(\rho)\right) \left(1 \, - \, \sqrt{1 \, - \, \frac{(\mu(\rho)- \rho_0)^2}{((\mu(\rho)-\rho_0)+\frac {1}{2 \, \rho_0} \, \Osc^2(\rho))^2}}\right) \, ,$$
\item[(2)] \; or $$\theta = \, \rho_0 \, + \, \left((\mu(\rho)-\rho_0) +  \, \frac {2}{ \rho^2_0 } \, \parallel\rho\parallel_{Lip}^2\right) \left(1 \, - \, \sqrt{1 \, - \, \frac{(\mu(\rho)- \rho_0)^2}{((\mu(\rho)-\rho_0)+\, \frac {2}{\rho^2_0} \, \parallel\rho\parallel_{Lip}^2)^2}}\right) \, .$$
\end{enumerate}
\end{theorem}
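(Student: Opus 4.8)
The plan is to deduce this statement from Theorem \ref{proprholip}(2): in the positive-curvature regime, the transport--information inequality that Theorem \ref{proprholip} requires is available for free, and in two distinct forms, so once these are fed in there is only an optimization to carry out. First I would record the two $WI$ inequalities. Since $\rho \geq \rho_0 > 0$, the Bakry-Emery bounds \eqref{be_bnds} give $C_P(\mu) \leq \rho_0^{-1}$ and $C_{LS}(\mu) \leq 2\rho_0^{-1}$. By Proposition \ref{propcompare}(1) the Poincar\'e inequality yields a $WI(\alpha,c)$ inequality with the Hamming cost $c(x,y) = \mathbf 1_{x\neq y}$ (so that $\|\rho\|_c = \Osc(\rho)$) and $\alpha(s) = s^2/C_P(\mu)$, while by Proposition \ref{propcompare}(2) the logarithmic Sobolev inequality yields a $WI(\alpha,c)$ inequality with $c(x,y) = |x-y|$ (so that $\|\rho\|_c = \|\rho\|_{Lip}$) and $\alpha(s) = s^2/C_{LS}(\mu)^2$. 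In either case $\alpha$ has the quadratic form $\alpha(s) = s^2/C$ of \eqref{eqalphetstar}, with $C = C_P(\mu) \leq \rho_0^{-1}$ in the first case and $C = C_{LS}(\mu)^2 \leq 4\rho_0^{-2}$ in the second, and the remaining hypotheses of Theorem \ref{proprholip} hold by assumption ($\mu(\rho) > 0$ being automatic since $\rho \geq \rho_0 > 0$).

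Next I would carry out the optimization behind Theorem \ref{proprholip}(2) with $\alpha(s) = s^2/C$. There, for $\varepsilon \in (0,1)$ and $r \geq 1$, the admissible rate is $\theta = \min\bigl(\varepsilon\mu(\rho),\, \rho_0 + \tfrac1r\alpha((1-\varepsilon)\mu(\rho)/\|\rho\|_c)\bigr)$, with prefactor $2^{1/r}(\sup_x r(2\eta,x,x))^{1/(2r)}e^{(\theta-\rho_0)\eta}$. For fixed $\varepsilon$ the minimum is non-increasing in $r$, so the best rate is obtained at the smallest admissible value $r = 1$; then $\theta = \min(\varepsilon\mu(\rho),\, \rho_0 + \alpha((1-\varepsilon)\mu(\rho)/\|\rho\|_c))$ is maximized over $\varepsilon$ at the crossover $\varepsilon\mu(\rho) = \rho_0 + \alpha((1-\varepsilon)\mu(\rho)/\|\rho\|_c)$. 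With $\alpha(s) = s^2/C$ this crossover equation is exactly \eqref{leqtosolve} with $a = 1$, $g = \rho$, $g_0 = \rho_0$; as in the elementary discussion following \eqref{leqtosolve}, the associated quadratic $h$ satisfies $h(0) = \mu^2(\rho) + C\rho_0\|\rho\|_c^2 > 0$ (using $\rho_0 > 0$) and $h(\mu(\rho)) = C\|\rho\|_c^2(\rho_0 - \mu(\rho)) \leq 0$, so it has a root giving a solution $\varepsilon \in (0,1]$, and after the change of variable that leads to \eqref{eqbestconst2} the optimal rate $\theta = \varepsilon\mu(\rho)$ takes the closed form \eqref{eqbestconst2}. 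Since this corresponds to $r = 1$, the prefactor is $2(\sup_x r(2\eta,x,x))^{1/2}e^{(\theta-\rho_0)\eta}$, as in the statement.

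Finally I would specialize to each of the two costs. For the Hamming cost, $\tfrac12 C\|\rho\|_c^2 = \tfrac12 C_P(\mu)\Osc^2(\rho) \leq \tfrac1{2\rho_0}\Osc^2(\rho)$, and for the Euclidean cost, $\tfrac12 C\|\rho\|_c^2 = \tfrac12 C_{LS}(\mu)^2\|\rho\|_{Lip}^2 \leq \tfrac{2}{\rho_0^2}\|\rho\|_{Lip}^2$. The bracket in \eqref{eqbestconst2} is $B - \sqrt{B^2 - (\mu(\rho)-\rho_0)^2}$ with $B = (\mu(\rho) - \rho_0) + \tfrac12 C\|\rho\|_c^2$, a decreasing function of $B$; hence replacing $\tfrac12 C\|\rho\|_c^2$ by the larger Bakry-Emery upper bound only decreases the value of $\theta$ in \eqref{eqbestconst2}, so the resulting smaller $\theta$ is still an admissible rate. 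Inserting the two upper bounds into \eqref{eqbestconst2} produces expressions (1) and (2) respectively, and feeding them back into Theorem \ref{proprholip}(2) gives the claimed $W_1$ contraction. I expect the only genuinely delicate point to be the optimization of the second paragraph, in particular checking that the extremal $\varepsilon$ lies in $(0,1)$ and corresponds to $r = 1$, which is what makes the displayed prefactor correct; everything else is assembly of results already established in the excerpt.
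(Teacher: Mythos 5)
Your proposal is correct and follows essentially the same route as the paper: apply Theorem \ref{proprholip}(2) with the quadratic $WI$ inequalities supplied by Proposition \ref{propcompare} together with the Bakry-Emery bounds $C_P\leq\rho_0^{-1}$ and $C_{LS}\leq 2\rho_0^{-1}$, take $r=1$, and choose $\varepsilon$ at the crossover of \eqref{leqtosolve} with $a=1$, which is exactly how the paper derives \eqref{eqbestconst2} and then specializes to the Hamming and Euclidean costs. Your additional checks (optimality of $r=1$, existence of the crossover $\varepsilon$ via $h(0)>0$ when $\rho_0>0$, and the monotonicity in $B$ justifying the substitution of the Bakry-Emery upper bounds for $C$) are correct and merely make explicit what the paper leaves implicit.
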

\smallskip

\begin{theorem}\label{BEimpliPoinc}
Similarly, under the assumptions of Theorem \ref{propPoinclip} and if $\rho_0 >0$, $$C_P(\mu) \leq \frac{1}{\rho_0 + \varepsilon'}$$ with 
\begin{enumerate}
\item[(1)] \; either
$$\varepsilon' =  \left((\mu(\rho)-\rho_0) + \, \frac{1}{\rho_0} \, \Osc^2(\rho)\right) \left(1 \, - \, \sqrt{1 \, - \, \frac{(\mu(\rho)- \rho_0)^2}{((\mu(\rho)-\rho_0)+ \, \frac{1}{\rho_0}  \, \Osc^2(\rho))^2}}\right) \, ,$$
\item[(2)] \; or $$\varepsilon' = \left((\mu(\rho)-\rho_0) +  \, \frac {4}{\rho^2_0} \, \parallel\rho\parallel_{Lip}^2\right) \left(1 \, - \, \sqrt{1 \, - \, \frac{(\mu(\rho)- \rho_0)^2}{((\mu(\rho)-\rho_0)+\, \frac {4}{\rho^2_0} \, \parallel\rho\parallel_{Lip}^2)^2}}\right) \, .$$
\end{enumerate}
\end{theorem}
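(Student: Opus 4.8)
The plan is to derive this directly from Theorem \ref{propPoinclip}(2), once the appropriate transport-information inequality has been supplied, and then to recognise the optimal parameter as the stated expression. First I would exploit positivity of the curvature: since $\rho \geq \rho_0 > 0$, the Bakry-Emery theorem \eqref{be_bnds} already gives $C_P(\mu) \leq \rho_0^{-1}$ and $C_{LS}(\mu) \leq 2\rho_0^{-1}$, and in particular $\mu(\rho) \geq \rho_0 > 0$. Plugging the finite value $C_P(\mu) \leq \rho_0^{-1}$ into Proposition \ref{propcompare}(1) yields a $WI(\alpha,c)$ inequality for the Hamming cost $c(x,y) = \mathbf 1_{x \neq y}$ (for which $\parallel\rho\parallel_c = \Osc(\rho)$) with $\alpha(s) = \rho_0 s^2$, i.e. $\alpha(s) = s^2/C$ with $C = \rho_0^{-1}$ in the notation of \eqref{eqalphetstar}; plugging $C_{LS}(\mu) \leq 2\rho_0^{-1}$ into Proposition \ref{propcompare}(2) yields a $WI(\alpha,c)$ inequality for the Euclidean cost $c(x,y) = |x-y|$ (for which $\parallel\rho\parallel_c = \parallel\rho\parallel_{Lip}$) with $\alpha(s) = (\rho_0^2/4)s^2$, i.e. $\alpha(s) = s^2/C$ with $C = 4\rho_0^{-2}$.

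Next I would check that Theorem \ref{propPoinclip}(2) is applicable in each of the two cases. Its standing hypotheses are the regularity assumptions inherited from Theorem \ref{proprholip} ($V$ of class $C^2$, $\parallel\rho\parallel_c < +\infty$, and $\rho$ continuous or $V$ of class $C^3$ with $\rho$ locally bounded), which hold here, while the $WI$ inequality needed there is now provided for free by the previous step; and the nondegeneracy condition \eqref{eqgoodalpha1ter}, namely $2\rho_0 + \alpha(\mu(\rho)/\parallel\rho\parallel_c) > 0$, holds trivially because $\rho_0 > 0$ and $\alpha \geq 0$. Theorem \ref{propPoinclip}(2) then produces $C_P(\mu) \leq 1/(\varepsilon\mu(\rho))$ for the optimal pair $(r,\varepsilon)$, where, as explained just after that theorem and encoded in \eqref{leqtosolve}, $\varepsilon \in (0,1)$ is the relevant root of $aC(\varepsilon\mu(\rho) - \rho_0)\parallel\rho\parallel_c^2 = (1-\varepsilon)^2\mu^2(\rho)$ with $a = 2$; since the constant term $\mu^2(\rho) + aC\rho_0\parallel\rho\parallel_c^2$ of the associated quadratic is strictly positive when $\rho_0 > 0$, such a root always exists, and the corresponding $r(\varepsilon) = \alpha((1-\varepsilon)\mu(\rho)/\parallel\rho\parallel_c)/(\varepsilon\mu(\rho) - \rho_0)$ can be taken $\geq 2$.

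Finally I would read off $\varepsilon\mu(\rho)$ from the closed form \eqref{eqbestconst2}, specialized to $g = \rho$, $g_0 = \rho_0$ and $a = 2$, which gives $\varepsilon\mu(\rho) = \rho_0 + \varepsilon'$ with $\varepsilon'$ as written there; substituting $\frac{a}{2}\,C\,\parallel\rho\parallel_c^2 = \rho_0^{-1}\Osc^2(\rho)$ in the Hamming case then yields formula (1), and $\frac{a}{2}\,C\,\parallel\rho\parallel_c^2 = 4\rho_0^{-2}\parallel\rho\parallel_{Lip}^2$ in the Euclidean case yields formula (2). I do not anticipate a real obstacle, since every ingredient has already been assembled; the only point deserving a word of care is the (harmless) bootstrap, in which the a priori Bakry-Emery bound $C_P(\mu) \leq \rho_0^{-1}$ is used solely to manufacture the $W_1I$ inequality that is then used to improve it — legitimate because Proposition \ref{propcompare} holds verbatim for any $C \geq C_P(\mu)$, respectively $C \geq C_{LS}(\mu)$ — together with checking that one may indeed take $r(\varepsilon) \geq 2$, which, as recalled in Remark \ref{remcost}, one can always arrange by replacing $\rho$ with a bounded, respectively Lipschitz, minorant at the sole expense of lowering $\mu(\rho)$.
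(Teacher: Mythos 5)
Your proposal is correct and follows essentially the same route as the paper: feed the Bakry--Emery bounds $C_P\leq\rho_0^{-1}$ and $C_{LS}\leq 2\rho_0^{-1}$ into Proposition \ref{propcompare} to obtain the two quadratic $WI(\alpha,c)$ inequalities (with $C=\rho_0^{-1}$ for the Hamming cost and $C=4\rho_0^{-2}$ for the Euclidean cost), then apply Theorem \ref{propPoinclip}(2) and solve the quadratic \eqref{leqtosolve} with $a=2$ in the form \eqref{eqbestconst2}, whose constant term is positive precisely because $\rho_0>0$. The only cosmetic slip is attributing the handling of $r(\varepsilon)\geq 2$ to Remark \ref{remcost}, which actually concerns replacing $\rho$ by a smoother minorant to make $\|\rho\|_c$ finite; the relevant discussion is the one following Theorem \ref{propPoinclip}, and the paper glosses over the boundary case $r(\varepsilon)=2$ in the same way you do.
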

\smallskip

What is interesting here is that one (almost) always obtains improvements of the Bakry-Emery constants, { in agreement with the rigidity phenomenon \cite{CZ17}}. Of course, for the convergence in $W_1$ distance some constant has to appear in front of the exponential rate. Also notice that when $\rho_0 \to 0$ or when $\Osc(\rho)$ or $\parallel \rho\parallel_{Lip} \to + \infty$, $\varepsilon'$ and $\theta$ go to $0$. Also notice that if $\rho$ is constant, its oscillation and its Lipchitz norm vanish so that $\theta$ and $\varepsilon'$ are equal to $0$.

As explained before, in some cases it is possible to obtain better constants by using $\alpha^*$ instead of $\alpha$. Explicit computations are left to the interested reader.

\begin{remark}\label{remultra}
Recall that if the semi-group is \emph{ultra-bounded}, then $\sup_x r(2\eta,x,x)$ is finite. Examples of ultra-bounded semi groups are well known. For example if $V$ is bounded below, goes to infinity at infinity and satisfies $$\frac 12 \, |\nabla V|^2 \, - \, \Delta V \, \geq c \, |V|^\gamma -  d \, \mathbf 1_{|x|\leq R}$$ for some $R>0$, $c,d>0$ and $\gamma>1$, the semi-group is ultra-bounded (see \cite{CatToul} Remark 5.5). With $\gamma=1$ one gets a log-Sobolev inequality, and with $\gamma=0$ a Poincar\'e inequality. Other conditions, such as Foster-Lyapunov conditions, may also be used.
\hfill $\diamondsuit$
\end{remark}
\smallskip

\subsection{Using the log-Sobolev constant. \\ \\}\label{subseclogsob}

In this subsection we will assume that $\mu$ satisfies a log-Sobolev inequality with constant $C_{LS}(\mu)$. In this situation it is well known that it also satisfies a Talagrand inequality $T_2(C_{LS}(\mu))$, so that we have
\begin{equation}\label{eqtala}
W_1(P_t^*\nu\, , \, \mu) \leq W_2(P_t^*\nu\, , \, \mu) \leq \sqrt{C_{LS}(\mu) \, H(P_t^*\nu|\mu)} \leq \sqrt{C_{LS}(\mu)} \; e^{- \, (1/C_{LS}(\mu)) \, t} \; \sqrt{H(\nu,\mu)} \, . 
\end{equation}
According to \cite{BGG} remark 2.8, one can improve on the previous inequalities and show that 
\begin{equation}\label{eqtal2a}
W_1(P_t^*\nu\, , \, \mu) \leq W_2(P_t^*\nu\, , \, \mu) \, \leq \, K(\mu) \; e^{- \, (1/C_{LS}(\mu)) \, t} \; W_2(\nu,\mu) \, .
\end{equation}
However we cannot replace $W_2$ by $W_1$ in the right hand side. According to Theorem \ref{propkappalip}, we can also deduce a similar result, using our method. 
\medskip

But according to what precedes, Theorem \ref{BEimplipW1} extends to the case $\rho_0\leq 0$, yielding $$W_1(P_t^*\nu\, , \, \mu) \leq C(\mu) \, e^{-\theta \, t} \, W_1(\nu,\mu)$$  in the following two situations,
\begin{enumerate}
\item[(1)] \; if the log-Sobolev constant satisfies
\begin{equation}\label{eqlogsobsmall}
C^2_{LS}(\mu) \, < \, \frac{4 \, \mu(\rho)}{\parallel \rho\parallel_{Lip}^2} \, ,
\end{equation}
we may choose $\theta = \mu(\rho) - \frac{C^2_{LS}(\mu)}{4} \, \parallel \rho\parallel_{Lip}^2$,
\item[(2)] \; if the log-Sobolev constant satisfies
\begin{equation}\label{eqlogsobsmall2}
C^2_{LS}(\mu) \, < \, \frac{\mu(\rho)}{|\rho_0| \, \parallel \rho\parallel_{Lip}^2} \, ,
\end{equation}
we may choose 
\begin{equation}\label{eqbestconst1}
\theta \, = \,  \left(\mu(\rho) + \frac 12 \, C^2_{LS}(\mu) \, \parallel \rho\parallel_{Lip}^2\right) \left(1 \, - \, \sqrt{1 \, - \, \frac{\mu^2(\rho)+C_{LS}^2(\mu) \, \rho_0 \, \parallel \rho\parallel_{Lip}^2}{(\mu(\rho)+\frac 12 \, C^2_{LS}(\mu) \, \parallel \rho\parallel_{Lip}^2)^2}}\right) \, .
\end{equation}
\end{enumerate}
\medskip

\subsection{The log-concave case. \\ \\}\label{subseclogconc}

The log-concave case corresponds to $\rho_0=0$. It was shown by Bobkov (\cite{bob99}) that such measures satisfy a Poincar\'e inequality. A simple proof was given in \cite{BBCG}. 

In addition, general quantitative bounds are known for $C_P(\mu)$ in this situation. The easiest one, mentioned in \cite[p.11]{Alonbast} but credited to \cite{KLS} reads as follows
\begin{proposition}\label{propklsvar}
Let $\mu$ be a log-concave distribution of probability. Define $\Var_\mu(x)=\mu(|x-\mu(x)|^2)$ and $\lambda^*$ as the largest eigenvalue of the covariance matrix of $\mu$. Then
\begin{equation}\label{eqkls1}
C_P(\mu) \, \leq \, 4 \, \Var_\mu(x) \, \leq \, 4 \, n \, \lambda^*
\end{equation}
\end{proposition}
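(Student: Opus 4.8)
The plan is to treat the two inequalities in \eqref{eqkls1} separately: the right-hand one is elementary linear algebra, while the left-hand one is the Kannan--Lov\'asz--Simonovits bound, which I would establish by the localization (``needle decomposition'') method. For the mere qualitative fact that a log-concave $\mu$ is Poincar\'e with constant $O(\Var_\mu(x))$ one could instead invoke E. Milman's equivalence theorem: under $CD(0,\infty)$ — which any log-concave measure satisfies with $\rho\equiv 0$ — a Poincar\'e inequality is equivalent, up to universal constants, to first-moment concentration, and $\mu(|x-\mu(x)|)\le\sqrt{\Var_\mu(x)}$ by Cauchy--Schwarz. The sharp constant $4$, however, seems to need the localization argument.

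For $\Var_\mu(x)\le n\lambda^*$: write $m=\mu(x)$ and let $\Sigma$ be the covariance matrix of $\mu$, with nonnegative eigenvalues $\lambda_1,\dots,\lambda_n$ and $\lambda^*=\max_i\lambda_i$; then $\Var_\mu(x)=\mu(|x-m|^2)=\mathrm{tr}\,\Sigma=\sum_{i=1}^n\lambda_i\le n\lambda^*$, which uses nothing about log-concavity.

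For $C_P(\mu)\le 4\Var_\mu(x)$ the first step is a reduction to dimension one. Using the identity $\Var_\mu(g)=\tfrac12\iint(g(x)-g(y))^2\,\mu(dx)\,\mu(dy)$ — applied both to $g=f$ and, inside the constant, to the coordinate functions so that $\Var_\mu(x)$ itself appears as such a double integral — the Poincar\'e inequality $\Var_\mu(f)\le 4\Var_\mu(x)\,\mu(|\nabla f|^2)$ becomes an inequality between products of integrals against $\mu$ of the type handled by the Lov\'asz--Simonovits localization lemma. That lemma reduces the verification to one-dimensional log-concave probability measures supported on intervals (the ``needles''): the restriction of a log-concave density to a line, multiplied by the affine Jacobian weight generated by the cutting procedure, is again log-concave, and on a needle $\nu$ the statement collapses exactly to $\Var_\nu(f)\le 4\Var_\nu(t)\,\nu(|f'|^2)$. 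Hence it suffices to prove $C_P(\nu)\le 4\Var_\nu(t)$ for every one-dimensional log-concave $\nu$.

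The second step is this one-dimensional estimate. For $\nu=e^{-W}\,dt$ with $W$ convex I would use Muckenhoupt's criterion: with $\theta$ a median of $\nu$, $$C_P(\nu)\le 4\,\max\Big(\sup_{t>\theta}\nu([t,\infty))\!\int_\theta^t\! e^{W(s)}\,ds\ ,\ \sup_{t<\theta}\nu((-\infty,t])\!\int_t^\theta\! e^{W(s)}\,ds\Big),$$ and then bound the right-hand side by $\Var_\nu(t)$ using the tail and Laplace-transform estimates available for log-concave densities (Borell's reverse H\"older inequalities, or an exponential tail bound of the form $\nu([\theta+u,\infty))\le e^{-cu/\sqrt{\Var_\nu(t)}}$), optimizing in the free parameters; the sharpness is visible on the exponential law, where both sides equal $4$. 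I expect the main obstacle to be Step~1: the localization lemma itself, and especially the bookkeeping showing that the needles are genuinely one-dimensional log-concave measures and that the global constant $\Var_\mu(x)$ is transported to $\Var_\nu(t)$ needle by needle — this is where the precise value $4$, rather than some larger universal constant, is won or lost. Step~2 is routine, but likewise needs some care if one insists on the optimal constant rather than settling for $C_P(\mu)\le C\,\Var_\mu(x)$ with a universal $C$.
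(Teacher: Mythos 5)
The paper offers no proof of this proposition: it is quoted from the literature, credited to \cite{KLS} with the statement as in \cite{Alonbast}, so there is no in-house argument to compare yours against. Your proof of the right-hand inequality, $\Var_\mu(x)=\mathrm{tr}\,\Sigma=\sum_i\lambda_i\le n\lambda^*$, is complete and correct, and your fallback via E.~Milman's equivalence under $CD(0,\infty)$ does legitimately give $C_P(\mu)\le C\,\Var_\mu(x)$ for some universal $C$. But for the stated constant $4$ your plan has a genuine gap, concentrated exactly where you suspect it is.

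The problem is that the Poincar\'e inequality does not localize in the way you describe. Writing $\Var_\mu(f)$ and $\Var_\mu(x)$ as double integrals makes both sides \emph{bilinear} in $\mu$, whereas the Lov\'asz--Simonovits localization lemma and its four-functions variant govern inequalities between products of \emph{linear} functionals $\int g_i\,d\mu$ with the $g_i$ fixed in advance. To fit that format one must linearize, say by proving $\mu((f-c)^2)\le 4\,\mu(|x-m|^2)\,\mu(|\nabla f|^2)$ with $c$ and the global mean $m$ frozen; but the corresponding needle inequality $\nu((f-c)^2)\le 4\,\nu(|x-m|^2)\,\nu(|f'|^2)$ is simply false for fixed $c$ (take $f$ constant and different from $c$ on a needle: the left side is positive and the right side vanishes). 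So the statement does not ``collapse exactly'' to $\Var_\nu(f)\le 4\Var_\nu(t)\,\nu(|f'|^2)$ on needles; this is precisely why \cite{KLS} localize an isoperimetric (Cheeger-type) statement, which is linear in $\mu$, and only afterwards pass to the spectral gap — and why the constant is delicate. The one-dimensional step is also not routine at the claimed precision: Muckenhoupt's criterion carries a built-in factor $4$ between its lower and upper bounds, and the one-sided exponential law attains $C_P(\nu)=4\Var_\nu(t)$ with \emph{equality}, so there is no slack to absorb losses from generic Borell-type tail estimates; you would need the exact inequality $B\le\Var_\nu(t)$ for the Muckenhoupt constant $B$ of every one-dimensional log-concave measure, which you do not establish. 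In short, the easy half is done, the hard half is a correct roadmap whose two crucial quantitative steps are left open; for the purposes of this paper, citing \cite{KLS} and \cite{Alonbast}, as the authors do, is the appropriate resolution.
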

It is conjectured (Kannan-Lovasz-Simonovits or K-L-S conjecture) that $$C_P(\mu) \leq C \, \lambda^*$$ for some universal constant $C$ which is thus dimension free. One can look at the original paper \cite{KLS} and at the monograph \cite{Alonbast} for an almost up to date state of the art. The converse inequality $\lambda^* \leq C_P(\mu)$ is always true. The best known result in this direction is due to the work of Lee and Vempala \cite{leevempfoc}: for some universal constant $C$,
\begin{equation}\label{eqleevemp}
C_P(\mu) \, \leq \, C \, n^{\frac 12}  \, \lambda^* \, .
\end{equation}

Except in the gaussian case where all quantities coincide, the relationship between the curvature and $\lambda^*$ is quite unclear, except that, according to the Bakry-Emery criterion $\lambda^* \leq 1/\rho_0$. 
\smallskip

Notice that we may assume that $0=\rho_0=\inf \rho$. Indeed if $\inf \rho >0$ we may always find a smaller modification on a set of small measure with an infimum equal to $0$ and then pass to the limit w.r.t. the measure of this set. Hence we may assume that $\Osc(\rho)=\sup \rho$.

Similarly we may replace $\rho$ by $\rho_K=\min(\rho,K)$, satisfying again $\rho_0=0$ and for a $K$ to be chosen later such that $\mu(\rho_K)>0$. Of course $\mu(\rho_K) \leq \mu(\rho)$. In order to get a lower bound for $\mu(\rho_K)$, it is enough to write $$\mu(\rho_K) \geq K \, \mu(\rho\geq K) \geq \frac 12 \, med_\mu(\rho)$$ if we choose $K=med_\mu(\rho)$. Of course the choice of a median is artificial and we could replace it by any quantile, up to a modification of the constant pre-factor. \medskip

The other specific feature is that, still in the log-concave case, we can replace $\rho$ by $\lambda \, \rho$ for any $0<\lambda \leq 1$. In particular we may choose $\lambda$ in \eqref{ratelip2} in such a way that $$\mu(\rho) - \frac{C \lambda}{4} \, \parallel \rho \parallel_c^2 \, \geq \, \frac{\mu(\rho)}{2} \, .$$ It follows 

\begin{theorem}\label{logconcimplipW1}
Assume that the assumptions of Theorem \ref{proprholip} are satisfied. Assume in addition that $\rho_0 = 0$. 

Then, for all $\nu$, all $t>\eta>0$, $$W_1(P_t^*\nu \, , \, \mu) \, \leq \, 2 \, \left(\sup_x \, r(2\eta,x,x))^{\frac 12} \right) \; e^{\theta \, \eta} \; e^{ -  \, \theta \, t} \; W_1(\nu,\mu) \, ,$$ where $\theta$ is given by one of the following expressions:
\begin{enumerate}
\item[(1)] \quad $\theta = \min \left(\frac{\mu(\rho)}{2} \, ; \, \frac{\mu^2(\rho)}{\parallel \rho \parallel_{Lip}^2 \, C_{LS}^2(\mu)}\right)$ ,
\item[(2)] \quad  $\theta = \min \left(\frac{\mu(\rho)}{2} \, ; \, \frac{\mu^2(\rho)}{\Osc^2(\rho) \,  C_{P}(\mu)}\right)$ \, .
\end{enumerate}
In particular we may always take $\theta = \frac 14 \; \min \left(med_{\mu}(\rho) \, ; \, \frac{1}{C_P(\mu)} \right)$.
\end{theorem}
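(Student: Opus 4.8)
The plan is to apply Theorem \ref{proprholip} in the log-concave regime $\rho_0 = 0$, where the two key reductions available for log-concave measures — truncating $\rho$ from above, and scaling $\rho$ down by a factor $\lambda \in (0,1]$ — become available. Throughout, I will use that $\rho_0 = 0$ forces $\alpha$-based criterion \eqref{eqgoodalpha1bis} to simplify (the term $\rho_0$ vanishes), so that the rate in case (2) of Theorem \ref{proprholip} becomes $\theta = \min(\varepsilon \mu(\rho), \frac{1}{r}\alpha((1-\varepsilon)\mu(\rho)/\|\rho\|_c))$, and similarly the $\alpha^*$-based rate in case (1) becomes $\theta = \mu(\rho) - \alpha^*(r\|\rho\|_c)/r$.

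First I would treat part (1). Here I take $c(x,y) = |x-y|$, so $\|\rho\|_c = \|\rho\|_{Lip}$, and use the log-Sobolev inequality: by Proposition \ref{propcompare}(2), $\mu$ satisfies $WI(\alpha,c)$ with $\alpha(s) = s^2/C_{LS}^2(\mu)$, hence $\alpha^*(s) = \frac{C_{LS}^2(\mu)}{4}s^2$. Applying case (1) of Theorem \ref{proprholip} with $r=1$ gives $\theta = \mu(\rho) - \frac{C_{LS}^2(\mu)}{4}\|\rho\|_{Lip}^2$ — but this is negative unless $C_{LS}^2(\mu)$ is very small. This is where the scaling trick enters: in the log-concave case we may replace $\rho$ by $\lambda\rho$ for any $0 < \lambda \le 1$, since \eqref{eqBEvar} still holds and $\rho_0 = 0$ is preserved. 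Choosing $\lambda$ so that $\lambda\mu(\rho) - \frac{C_{LS}^2(\mu)}{4}\lambda^2\|\rho\|_{Lip}^2 \geq \frac{\lambda\mu(\rho)}{2}$, i.e. $\lambda \le 2\mu(\rho)/(C_{LS}^2(\mu)\|\rho\|_{Lip}^2)$, yields a rate at least $\lambda\mu(\rho)/2$; optimizing over $\lambda \in (0,1]$ gives $\theta = \min(\mu(\rho)/2,\ \mu^2(\rho)/(\|\rho\|_{Lip}^2 C_{LS}^2(\mu)))$, which is exactly the claimed value, and the prefactor $C(\nu) = 2(\sup_x r(2\eta,x,x))^{1/2}e^{\theta\eta}$ comes straight from Theorem \ref{proprholip} (with $\rho_0 = 0$, so the exponent $(\theta - \rho_0)\eta = \theta\eta$).

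Part (2) is analogous but uses the Poincar\'e inequality itself as input: by Proposition \ref{propcompare}(1), $\mu$ satisfies $WI(\alpha,c)$ with $c(x,y) = \mathbf 1_{x\neq y}$, $\|\rho\|_c = \Osc(\rho)$, and $\alpha(s) = s^2/C_P(\mu)$, so $\alpha^*(s) = \frac{C_P(\mu)}{4}s^2$. The same scaling argument — replace $\rho$ by $\lambda\rho$, apply case (1) of Theorem \ref{proprholip} with $r=1$, and choose $\lambda$ to keep the rate at least $\lambda\mu(\rho)/2$ — produces $\theta = \min(\mu(\rho)/2,\ \mu^2(\rho)/(\Osc^2(\rho)C_P(\mu)))$. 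For the final ``in particular'' clause, I would invoke the truncation: replace $\rho$ by $\rho_K = \min(\rho,K)$ with $K = med_\mu(\rho)$, which preserves $\rho_0 = 0$ and gives $\Osc(\rho_K) = \sup\rho_K = K$ together with $\mu(\rho_K) \geq K\,\mu(\rho \geq K) \geq \frac12 med_\mu(\rho)$. Plugging $\mu(\rho_K) \ge \frac12 med_\mu(\rho)$ and $\Osc(\rho_K) = med_\mu(\rho)$ into part (2)'s formula, the first term in the min is $\geq \frac14 med_\mu(\rho)$ and the second is $\geq \frac{(med_\mu(\rho)/2)^2}{med_\mu(\rho)^2 C_P(\mu)} = \frac{1}{4C_P(\mu)}$, giving $\theta = \frac14\min(med_\mu(\rho),\ 1/C_P(\mu))$.

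The main obstacle is verifying that the two log-concave reductions are legitimate as quantitative operations rather than just heuristics — specifically, that truncating $\rho$ to $\rho_K$ still yields a valid lower curvature bound in the sense of \eqref{eqBEvar} (it does, since $\rho_K \le \rho \le$ smallest eigenvalue of $\Hess V$) and keeps $\rho$ within the regularity hypotheses of Theorem \ref{proprholip} (Lipschitz or locally bounded — truncation of a Lipschitz function is Lipschitz with the same constant, and $\Osc$ only decreases), and likewise that $\lambda\rho$ is admissible. One should also check that the assumption $\mu(\rho_K) > 0$ needed to run Theorem \ref{proprholip} is guaranteed by the choice $K = med_\mu(\rho)$ precisely when $med_\mu(\rho) > 0$, which holds whenever $\rho$ is not $\mu$-a.e. nonpositive; the degenerate case is harmless since then the stated bound is vacuous. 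Everything else is bookkeeping: tracking that $\rho_0 = 0$ kills the relevant terms, that $r = 1$ is the admissible optimum here, and that the prefactor and $W_1(\nu,\mu)$ factor are inherited verbatim from Theorem \ref{proprholip}.
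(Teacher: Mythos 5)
Your proof is correct and follows essentially the same route as the paper: both parts come from case (1) of Theorem \ref{proprholip} with $r=1$ and the quadratic $WI$ inequalities of Proposition \ref{propcompare}, combined with the log-concave rescaling $\rho\mapsto\lambda\rho$ chosen so the rate stays above $\lambda\mu(\rho)/2$, and the final clause from the truncation $\rho_K$ with $K=med_\mu(\rho)$. This is exactly the paper's argument, including the justification that both reductions preserve \eqref{eqBEvar} and the hypotheses of Theorem \ref{proprholip}.
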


\begin{remark}\label{remlogconclogsob}
If we assume that the semi-group is ultra-bounded, then it is hyper-bounded so that $\mu$ satisfies a defective log-Sobolev inequality $\Ent_\mu(f^2) \leq C \, \mu(|\nabla f|^2) + A \, \mu(f^2)$. According to the Rothaus lemma, since a log-concave $\mu$ always satisfies a Poincar\'e inequality, the previous defective log-Sobolev inequality can be tightened into a true log-Sobolev inequality with a log-Sobolev constant $C_{LS}(\mu) \leq (C+(A+2)C_P(\mu))$. The difference with the previous subsection is that we do not have to assume that this constant is small enough. In addition the value of the constant $C$  { in the defective LSI approach} depends on the $L^\infty$ norm of the density {of the measure itself}.

Notice that the Theorem applies for instance with $V(x)=|x|^\beta$ with $\beta>2$.
\smallskip

It is tempting to use the same normalization trick in \eqref{ratelip3} with $C=C_P$ in order to get a self improving inequality. Unfortunately this yields a trivial $\Osc^2(\rho)/\mu^2(\rho) \geq 1$, i.e. \eqref{ratelip3} is not self improving. The same disappointing fact, somewhat more tricky to check,  holds with \eqref{eqbestconst}.
\hfill $\diamondsuit$
\end{remark}
\medskip

\begin{remark}\label{remBL}
In the log-concave situation another approach is possible for the Poincar\'e constant, using the famous Brascamp-Lieb inequality
\begin{equation}\label{eqBL}
\Var_\mu(f) \, \leq \, \int \; \langle \nabla f \, , \, (Hess V)^{-1} \, \nabla f\rangle \, d\mu \, .
\end{equation}
In particular it holds 
\begin{equation}\label{eqembl}
\Var_\mu(f) \, \leq \, \left(\int \; \sup_{u \in \mathbb S^{n-1}} \, \langle  u \, , \, (Hess V)^{-1} \, u\rangle \, d\mu\right) \, \parallel |\nabla f|\parallel_\infty^2 \, .
\end{equation}
Using E. Milman's result (see \cite[Theorem 2.4]{emil1} or \cite[Theorem 2.7 and Theorem 2.14]{CGlogconc} for extensions) we deduce that there exists an universal constant $C$ (see \cite{CGlogconc} for explicit bounds) such that 
\begin{equation}\label{eqemb2}
C_P(\mu) \, \leq \, C \, \left(\int \; \sup_{u \in \mathbb S^{n-1}} \, \langle  u \, , \, (Hess V)^{-1} \, u\rangle \, d\mu\right) \, \leq \, C \, \mu(1/\rho) \, .
\end{equation}
We thus have a control of the Poincar\'e constant using some \emph{mean} of the ``inverse'' curvature, {which recovers, up to the value of the constant prefactor, Veysseire's result \cite{Vey11}.} \hfill $\diamondsuit$
\end{remark}
\medskip

\section{Comparing with a direct approach in the ultra-bounded case.}\label{secultrab}

In many of the previous results the quantity $$\sup_x \, r(2\eta,x,x)$$ appears in the pre-factor for the $W_1$ convergence. As we said, finiteness of the above quantity is ensured when the semi-group is ultra bounded, i.e. for all $s>0$, $$\sup_{x,y} \, r(s,x,y) \, \leq \, K(s) \,  < \, +\infty \, .$$ Actually, we do not know examples where the semi-group is not ultra bounded, but $r(2\eta,x,x)$ is bounded in $x$. In the ultra-bounded situation, one may directly control the $W_1$ decay and see that the rate of the exponential decay obtained in the previous two subsections is not optimal.
\medskip

First recall that any Boltzmann probability measure $e^{-V(x)} \, dx$ with $V$ locally bounded satisfies a weak Poincar\'e inequality (see \cite{RW,CatPota}). If the semi-group is ultra-bounded it is hyper-bounded and together with the weak Poincar\'e inequality we get that a (true) Poincar\'e inequality is satisfied (see \cite[Proposition 5.13]{CatPota}), hence according to the Rothaus lemma $\mu$ satisfies a logarithmic Sobolev inequality. In our approach here, we shall only use the Poincar\'e inequality.

Recall that $$W_1(P^*_t\nu \, , \, \mu) \, = \, \sup_{\parallel f\parallel_{Lip}=1} \, \left(P^*_t\nu(f) - \mu(f)\right) \, .$$ Replacing $f$ by $f-f(0)$ does not change the previous expression. Therefore, since we know that $$P^*_\eta \nu(dy)= \left(\int \, r(\eta,x,y) \, \nu(dx)\right) \, \mu(dy) \, = \, r(\eta,\nu, y) \, \mu(dy)$$ and since $P_s$ is symmetric, using the Lipschitz bound on $f$ we get
\begin{equation}\label{eqW1direct1}
W_1(P^*_t\nu \, , \, \mu) \, \leq \, \int \, |x| \, |P_{t-\eta}r(\eta,\nu,.) - 1| \, \mu(dx) \, .
\end{equation}
If $r(\eta,\nu,.) \in \mathbb L^2(\mu)$, using first Cauchy-Schwarz inequality and then the Poincar\'e inequality we get
\begin{eqnarray}\label{eqW1direct2}
W_1(P^*_t\nu \, , \, \mu) &\leq& \mu^{\frac 12}(|x|^2) \; e^{- \, \frac{t-\eta}{C_P(\mu)}} \, \mu^{\frac 12}(|r(\eta,\nu,.)-1|^2) \nonumber \\ &\leq& \mu^{\frac 12}(|x|^2) \; e^{- \, \frac{t-\eta}{C_P(\mu)}} \, \left(\int r(2\eta,x,x) \, \nu(dx) \, - \, 1\right)^{\frac 12} \, .
\end{eqnarray}
It follows that the rate of exponential convergence is $\theta=1/C_P(\mu)$, whatever the curvature is. We are thus { gaining a factor $4$} compared with Theorem \ref{logconcimplipW1}, but we are losing the pre-factor $W_1(\nu,\mu)$ in this direct approach.
\smallskip

If in addition $r(s,.,.)$ is bounded, $$\left(\int r(2\eta,x,x) \, \nu(dx) \, - \, 1\right)^{\frac 12} \, = \, \mu^{\frac 12}(|r(\eta,\nu,.)-1|^2) \, \leq \, (K(\eta)-1)^{\frac 12} \, \parallel \nu \, - \, \mu\parallel_{TV}^{\frac 12} \, .$$ The following can thus be favorably compared with our previous results in some cases
\begin{proposition}\label{propW1ultra}
If the semi-group is ultra-bounded, then for all $\eta >0$, denoting $K(\eta)=\sup_{x,y} r(\eta,x,y)$ we have $$W_1(P^*_t\nu \, , \, \mu) \leq (K(\eta)-1)^{\frac 12} \, \mu^{\frac 12}(|x|^2) \, e^{\eta/C_P(\mu)} \, \parallel \nu \, - \, \mu\parallel_{TV}^{\frac 12} \, e^{- t/C_P(\mu)} \, .$$
\end{proposition}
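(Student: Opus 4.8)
The plan is to abandon the gradient-estimate route of the previous sections and instead combine two ingredients: the smoothing effect of an ultra-bounded semigroup, which places $P_\eta^*\nu$ into $\mathbb L^2(\mu)$ with a quantitative bound, and the $\mathbb L^2$ spectral-gap decay furnished by a Poincar\'e inequality.

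First I would record that $\mu$ does satisfy a Poincar\'e inequality at all: any Boltzmann measure $e^{-V}\,dx$ with $V$ locally bounded satisfies a weak Poincar\'e inequality (\cite{RW,CatPota}), and an ultra-bounded semigroup is in particular hyper-bounded, so by \cite[Proposition 5.13]{CatPota} the weak inequality self-strengthens into a genuine Poincar\'e inequality. By reversibility and the spectral theorem this yields $\Var_\mu(P_s g)\le e^{-2s/C_P(\mu)}\,\Var_\mu(g)$ for every $g\in\mathbb L^2(\mu)$, which is the only functional inequality the argument requires.

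Next, starting from the dual formula $W_1(P_t^*\nu,\mu)=\sup_{\parallel f\parallel_{Lip}=1}(P_t^*\nu(f)-\mu(f))$, I would normalise $f$ by $f-f(0)$, so that $|f(x)|\le|x|$, write $P_\eta^*\nu=r(\eta,\nu,\cdot)\,\mu$ with $r(\eta,\nu,y)=\int r(\eta,x,y)\,\nu(dx)$ (via Proposition \ref{propregul}), and use the semigroup identity $r(t,\nu,\cdot)=P_{t-\eta}(r(\eta,\nu,\cdot))$ together with the symmetry of $P_{t-\eta}$ in $\mathbb L^2(\mu)$ and $P_{t-\eta}1=1$ to reach \eqref{eqW1direct1}, namely $W_1(P_t^*\nu,\mu)\le\int|x|\,|P_{t-\eta}r(\eta,\nu,\cdot)-1|\,d\mu$. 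Ultra-boundedness enters precisely here: by Chapman--Kolmogorov and symmetry, $\mu(r(\eta,\nu,\cdot)^2)=\int\!\!\int r(2\eta,x,x')\,\nu(dx)\,\nu(dx')\le K(2\eta)<\infty$, so $r(\eta,\nu,\cdot)-1$ is a legitimate mean-zero element of $\mathbb L^2(\mu)$ and, by Cauchy--Schwarz on the kernel followed by Jensen, $\Var_\mu(r(\eta,\nu,\cdot))\le\int r(2\eta,x,x)\,\nu(dx)-1$. Applying Cauchy--Schwarz in the display above and then the variance decay gives \eqref{eqW1direct2}:
$$W_1(P_t^*\nu,\mu)\le\mu^{1/2}(|x|^2)\,e^{-(t-\eta)/C_P(\mu)}\,\Var_\mu^{1/2}\bigl(r(\eta,\nu,\cdot)\bigr).$$

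Finally, to convert the $r(2\eta,x,x)$-term into the stated total-variation pre-factor I would interpolate $\mathbb L^2$ between $\mathbb L^\infty$ and $\mathbb L^1$: since $0\le r(\eta,\nu,\cdot)\le K(\eta)$ pointwise, one has $\parallel r(\eta,\nu,\cdot)-1\parallel_\infty\le K(\eta)-1$, hence
$$\Var_\mu\bigl(r(\eta,\nu,\cdot)\bigr)\le(K(\eta)-1)\,\mu\bigl(|r(\eta,\nu,\cdot)-1|\bigr)=(K(\eta)-1)\,\parallel P_\eta^*\nu-\mu\parallel_{TV}\le(K(\eta)-1)\,\parallel\nu-\mu\parallel_{TV},$$
the last inequality being the total-variation contractivity of the Markov semigroup. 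Substituting this into the previous display and writing $e^{-(t-\eta)/C_P(\mu)}=e^{\eta/C_P(\mu)}e^{-t/C_P(\mu)}$ produces the claimed bound. The only point demanding genuine care, rather than depth, is the passage to $\mathbb L^2(\mu)$ and the kernel identities for $\mu(r(\eta,\nu,\cdot)^2)$: these rest on the continuity of the density from Proposition \ref{propregul}, on Chapman--Kolmogorov, and on the symmetry $r(s,x,y)=r(s,y,x)$, all of which are available in this setting; the rest is a routine chain of Cauchy--Schwarz, the Poincar\'e spectral gap, and an $\mathbb L^\infty$--$\mathbb L^1$ interpolation.
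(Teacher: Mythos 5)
Your proposal is correct and follows essentially the same route as the paper: the dual formula with $f-f(0)$, symmetry of $P_{t-\eta}$ to reach \eqref{eqW1direct1}, Cauchy--Schwarz plus the Poincar\'e spectral decay to reach \eqref{eqW1direct2}, and the $\mathbb L^\infty$--$\mathbb L^1$ interpolation with TV-contractivity for the pre-factor. The one slightly loose point, namely that $\parallel r(\eta,\nu,\cdot)-1\parallel_\infty\leq K(\eta)-1$ implicitly uses $K(\eta)\geq 2$ (otherwise the bound is $\max(1,K(\eta)-1)$), is shared with the paper's own argument and is not a gap specific to your write-up.
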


\underline{\textbf{Acknowledgments:}} This work was supported by Project EFI (ANR-17-CE40-0030) of the French National Research
Agency (ANR). PC and MF were supported by ANR-11-LABX-0040-CIMI within the program ANR-11-IDEX-0002-02. MF was also supported by ANR Project MESA (ANR-18-CE40-006) and by the Partenariat Hubert Curien Procope. We thank Karl-Theodor Sturm for his lectures in Toulouse in the Spring 2019, during the semester program \emph{Calculus of Variations and Probability} of the Labex CIMI. We also thank Matthias Erbar, Benjamin Jourdain, Georg Menz, Julien Reygner and Gabriel Stoltz for valuable discussions.

\bibliographystyle{plain}
\bibliography{CFG}

\begin{thebibliography}{10}

\bibitem{Alonbast}
D.~Alonso-Gutierrez and J.~Bastero.
\newblock {\em Approaching the {K}annan-{L}ovasz-{S}imonovits and variance
  conjectures}, volume 2131 of {\em LNM}.
\newblock Springer, 2015.

\bibitem{Ane}
C.~An{\'e}, S.~Blach{\`e}re, D.~Chafa{\"\i}, P.~Foug{\`e}res, I.~Gentil,
  F.~Malrieu, C.~Roberto, and G.~Scheffer.
\newblock {\em Sur les in{\'e}galit{\'e}s de {S}obolev logarithmiques},
  volume~10 of {\em Panoramas et Synth{\`e}ses}.
\newblock Soci{\'e}t{\'e} {M}ath{\'e}matique de {F}rance, Paris, 2000.

\bibitem{ABJ}
M.~Arnaudon, M.~Bonnefont, and A.~Joulin.
\newblock Intertwinings and generalized {B}rascamp-{L}ieb inequalities.
\newblock {\em Rev. Mat. Iberoam.}, 34(3):1021--1054, 2018.

\bibitem{ATW14}
M.~Arnaudon, A.~Thalmaier, and F.~Y. Wang.
\newblock Equivalent {H}arnack and gradient inequalities for pointwise
  curvature lower bound.
\newblock {\em Bull. Sci. Math.}, 138:643--655, 2014.

\bibitem{AJLR}
R.~Assaraf, B.~Jourdain, T.~Leli\`evre, and R.~Roux.
\newblock Computation of sensitivities for the invariant measure of a parameter
  dependent diffusion.
\newblock {\em Stoch. Partial Differ. Equ. Anal. Comput.}, 6(2):125--183, 2018.

\bibitem{BBCG}
D.~Bakry, F.~Barthe, P.~Cattiaux, and A.~Guillin.
\newblock A simple proof of the {P}oincar\'e inequality for a large class of
  probability measures.
\newblock {\em Elec. {C}omm. in {P}rob.}, 13:60--66, 2008.

\bibitem{BaGLbook}
D.~Bakry, I.~Gentil, and M.~Ledoux.
\newblock {\em Analysis and {G}eometry of {M}arkov diffusion operators.},
  volume 348 of {\em Grundlehren der mathematischen {W}issenchaften}.
\newblock Springer, Berlin, 2014.

\bibitem{BGLharn}
D.~Bakry, I.~Gentil, and M.~Ledoux.
\newblock On {H}arnack inequalities and optimal transportation.
\newblock {\em Ann. Scuola Norm-Sci.}, 14:705--727, 2015.

\bibitem{barmil}
F.~Barthe and E.~Milman.
\newblock Transference principles for log-sobolev and spectral-gap with
  applications to conservative spin systems.
\newblock {\em Comm. Math. Physics}, 323:575--625, 2013.

\bibitem{bob99}
S.~G. Bobkov.
\newblock Isoperimetric and analytic inequalities for log-concave probability
  measures.
\newblock {\em Ann. Probab.}, 27(4):1903--1921, 1999.

\bibitem{BGG}
F.~Bolley, I.~Gentil, and A.~Guillin.
\newblock Convergence to equilibrium in {W}asserstein distance for
  {F}okker-{P}lanck equations.
\newblock {\em J. Funct. Anal.}, 263(8):2430--2457, 2012.

\bibitem{BJ}
M.~Bonnefont and A.~Joulin.
\newblock Intertwining relations for one-dimensional diffusions and application
  to functional inequalities.
\newblock {\em Potential Analysis}, 41:1005--1031, 2014.

\bibitem{BJM}
M.~Bonnefont, A.~Joulin, and Y.~Ma.
\newblock Spectral gap for spherically symmetric log-concave probability
  measures, and beyond.
\newblock {\em J. Funct. Anal.}, 270(7):2456--2482, 2016.

\bibitem{BHS}
M.~Braun, K.~Habermann, and K.~T. Sturm.
\newblock Optimal transport, gradient estimates, and pathwise {B}rownian
  coupling on spaces with variable {R}icci bounds.
\newblock Available on Math. ArXiv. 1906.09186 [math.FA], 2019.

\bibitem{CatPota}
P.~Cattiaux.
\newblock A pathwise approach of some classical inequalities.
\newblock {\em Potential. Anal.}, 20:361--394, 2004.

\bibitem{CatToul}
P.~Cattiaux.
\newblock Hypercontractivity for perturbed diffusion semigroups.
\newblock {\em Ann. Fac. Sciences Toulouse Math. (6)}, 14(4):609--628, 2005.

\bibitem{CGesaim}
P.~Cattiaux and A.~Guillin.
\newblock Deviation bounds for additive functionals of {M}arkov processes.
\newblock {\em ESAIM Probab. Stat.}, 12:12--29, 2008.

\bibitem{CGsemin}
P.~Cattiaux and A.~Guillin.
\newblock Semi log-concave {M}arkov diffusions.
\newblock In {\em S\'{e}minaire de {P}robabilit\'{e}s {XLVI}}, volume 2123 of
  {\em Lecture Notes in Math.}, pages 231--292. Springer, Cham, 2014.

\bibitem{CGlogconc}
P.~Cattiaux and A.~Guillin.
\newblock On the {P}oincar\'e constant of log-concave measures.
\newblock To appear in {G}eometric {A}spects of {F}unctional {A}nalysis:
  {I}srael {S}eminar {(GAFA)}., 2019.

\bibitem{CGZ}
P.~Cattiaux, A.~Guillin, and P.~A. Zitt.
\newblock Poincar\'e inequalities and hitting times.
\newblock {\em Ann. Inst. Henri Poincar\'e. Prob. Stat.}, 49(1):95--118, 2013.

\bibitem{CZ17}
X.~Cheng and D.~Zhou.
\newblock Eigenvalues of the drifted {L}aplacian on complete metric measure
  spaces.
\newblock {\em Comm. Contemp. Math.}, 17, 2017.

\bibitem{CF18}
T.A. Courtade and M.~Fathi.
\newblock Stability in the {B}akry-{E}mery theorem on $\mathbb{R}^n$.
\newblock {\em Arxiv preprint}, 2018.

\bibitem{DPF17}
G.~De~Philippis and A.~Figalli.
\newblock Rigidity and stability of {C}affarelli's log-concave perturbation
  theorem.
\newblock {\em Nonlinear Anal.}, 154:59--70, 2017.

\bibitem{FRS18}
G.~Ferr\'e, M.~Rousset, and G.~Stoltz.
\newblock More on the long time stability of {F}eynman-{K}ac semigroups.
\newblock {\em arxiv preprint}, 2018.

\bibitem{GKKO}
N.~Gigli, C.~Ketterer, K.~Kuwada, and S.~Ohta.
\newblock Rigidity for the spectral gap on ${RCD(K},\infty)$-spaces.
\newblock {\em to appear in Amer. J. Math.}, 2018.

\bibitem{GRS}
N.~Gozlan, C.~Roberto, and P.-M. Samson.
\newblock From concentration to logarithmic {S}obolev and {P}oincar\'e
  inequalities.
\newblock {\em J. Funct. Anal.}, 260:1491--1522, 2013.

\bibitem{GLWY}
A.~Guillin, C.~L\'{e}onard, L.~Wu, and N.~Yao.
\newblock Transportation-information inequalities for {M}arkov processes.
\newblock {\em Probab. Theory Related Fields}, 144(3-4):669--695, 2009.

\bibitem{IW}
N.~Ikeda and S.~Watanabe.
\newblock {\em Stochastic differential equations and diffusion processes},
  volume~24 of {\em North-Holland Mathematical Library}.
\newblock North-Holland Publishing Co., Amsterdam-New York; Kodansha, Ltd.,
  Tokyo, 1981.

\bibitem{JO10}
A.~Joulin and Y.~Ollivier.
\newblock Curvature, concentration and error estimates for {M}arkov chain
  {M}onte {C}arlo.
\newblock {\em Ann. Probab.}, 38(6):2418--2442, 2010.

\bibitem{KLS}
R.~Kannan, L.~Lovasz, and M.~Simonovits.
\newblock Isoperimetric problems for convex bodies and a localization lemma.
\newblock {\em Discrete Comput. Geom.}, 13(3-4):541--559, 1995.

\bibitem{KM05}
I.~Kontoyiannis and S.~P. Meyn.
\newblock Large deviations asymptotics and the spectral theory of
  multiplicatively regular {M}arkov processes.
\newblock {\em Electron. J. Probab.}, 10:no. 3, 61--123, 2005.

\bibitem{Kunita}
H.~Kunita.
\newblock Stochastic differential equations and stochastic flows of
  diffeomorphisms.
\newblock In {\em \'{E}cole d'\'{e}t\'{e} de probabilit\'{e}s de
  {S}aint-{F}lour, {XII}---1982}, volume 1097 of {\em Lecture Notes in Math.},
  pages 143--303. Springer, Berlin, 1984.

\bibitem{Kuw10}
Kazumasa Kuwada.
\newblock Duality on gradient estimates and {W}asserstein controls.
\newblock {\em J. Funct. Anal.}, 258(11):3758--3774, 2010.

\bibitem{leevempfoc}
Y.~T. Lee and S.~S. Vempala.
\newblock Eldan's stochastic localization and the {KLS} hyperplane conjecture:
  {A}n improved lower bound for expansion.
\newblock In Proc. of IEEE FOCS, 2017., 2017.

\bibitem{Leo12}
C.~L\'eonard.
\newblock Girsanov theory under a finite entropy condition.
\newblock {\em S\'eminaire de probabilit\'es de Strasbourg}, XLIV(Lecture Notes
  in Mathematics 2046):429--465, 2012.

\bibitem{Lez}
P.~Lezaud.
\newblock Chernoff and {B}erry-{E}essen inequalities for {M}arkov processes.
\newblock {\em {ESAIM} {P}robability and {S}tatistics}, 5:183--201, 2001.

\bibitem{MT}
F.~Malrieu and D.~Talay.
\newblock Concentration inequalities for {E}uler schemes.
\newblock In {\em Monte {C}arlo and quasi-{M}onte {C}arlo methods 2004}, pages
  355--371. Springer, Berlin, 2006.

\bibitem{MNS}
A.~Millet, D.~Nualart, and M.~Sanz.
\newblock Integration by parts and time reversal for diffusion processes.
\newblock {\em Ann. Probab.}, 17(1):208--238, 1989.

\bibitem{emil1}
E.~Milman.
\newblock On the role of convexity in isoperimetry, spectral-gap and
  concentration.
\newblock {\em Invent. math.}, 177:1--43, 2009.

\bibitem{emil2}
E.~Milman.
\newblock Isoperimetric and concentration inequalities: Equivalence under
  curvature lower bound.
\newblock {\em Duke Math. J.}, 154(2):207--239, 2010.

\bibitem{OT19}
S.~Ohta and A.~Takatsu.
\newblock Equality in the logarithmic {S}obolev inequality.
\newblock {\em to appear in Manuscripta Math.}, 2019.

\bibitem{OV}
F.~Otto and C.~Villani.
\newblock Generalization of an inequality by {T}alagrand and links with the
  logarithmic {S}obolev inequality.
\newblock {\em J. Funct. Anal.}, 173:361--400, 2000.

\bibitem{RW}
M.~R{\"o}ckner and F.~Y. Wang.
\newblock Weak {P}oincar\'e inequalities and {$\mathbb L^2$}-convergence rates
  of {M}arkov semigroups.
\newblock {\em J. Funct. Anal.}, 185(2):564--603, 2001.

\bibitem{Vey11}
L.~Veysseire.
\newblock Improved spectral gap bounds on positively curved manifolds.
\newblock Available on Math. ArXiv. 1105.6080 [math.PR], 2011.

\bibitem{RS}
M.~K. von Renesse and K.~T. Sturm.
\newblock Transport inequalities, gradient estimates, entropy and {R}icci
  curvature.
\newblock {\em Comm. Pure Appl. Math.}, 68:923--940, 2005.

\bibitem{W97}
F-Y. Wang.
\newblock Logarithmic {S}obolev inequalities on noncompact {R}iemannian
  manifolds.
\newblock {\em Probab. Theory Related Fields}, 109(3):417--424, 1997.

\bibitem{Wbook}
F.~Y. Wang.
\newblock {\em Functional inequalities, {M}arkov processes and {S}pectral
  theory}.
\newblock Science Press, Beijing, 2005.

\bibitem{Wu0}
L.~Wu.
\newblock A deviation inequality for non-reversible {M}arkov process.
\newblock {\em Ann. Inst. Henri Poincar\'e. Prob. Stat.}, 36(4):435--445, 2000.

\end{thebibliography}
\end{document}